\newtheorem{theorem}{Theorem}[section]
\newtheorem{corollary}[theorem]{Corollary}
\newtheorem{lemma}[theorem]{Lemma}
\newtheorem{proposition}[theorem]{Proposition}
\newtheorem{definition}[theorem]{Definition}
\newtheorem*{theorem*}{Theorem}
\newtheorem*{lemma*}{Lemma}
\newtheorem*{remark*}{Remark}
\newtheorem*{definition*}{Definition}
\newtheorem*{proposition*}{Proposition}
\newtheorem*{corollary*}{Corollary}
\numberwithin{equation}{section}
\newcommand{\real}{\mathbb{R}}
\let\ced=\c         
\def\a{\alpha}
\def\b{\beta}
\def\e{\varepsilon}        
\def\ca{{\cal A}}
\def\cf{{\cal F}}
\def\cm{{\cal M}}
\def\cs{{\cal S}}
\def\qed{\,\unskip\kern 6pt \penalty 500
\raise -2pt\hbox{\vrule \vbox to8pt{\hrule width 6pt
\vfill\hrule}\vrule}\par}
\definecolor{darkblue}{rgb}{0.05, .05, .65}
\definecolor{darkgreen}{rgb}{0.1, .65, .1}
\definecolor{darkred}{rgb}{0.8,0,0}
\newcommand{\beqn}{\begin{equation}}
\newcommand{\eeqn}{\end{equation}}
\newcommand{\bear}{\begin{eqnarray}}
\newcommand{\eear}{\end{eqnarray}}
\newcommand{\bean}{\begin{eqnarray*}}
\newcommand{\eean}{\end{eqnarray*}}
\begin{document}

\title{\huge \bf Asymptotic behavior for a singular diffusion equation with gradient absorption}

\author{
\Large Razvan Gabriel Iagar\,\footnote{Departamento de Análisis
Matemático, Univ. de Valencia, Dr. Moliner 50, 46100, Burjassot
(Valencia), Spain, \textit{e-mail:}
razvan.iagar@uv.es},\footnote{Institute of Mathematics of the
Romanian Academy, P.O. Box 1-764, RO-014700, Bucharest, Romania.}
\\[4pt] \Large Philippe Lauren\c cot\,\footnote{Institut de
Math\'ematiques de Toulouse, CNRS UMR~5219, Universit\'e de
Toulouse, F--31062 Toulouse Cedex 9, France. \textit{e-mail:}
Philippe.Laurencot@math.univ-toulouse.fr}\\ [4pt] }
\date{\today}
\maketitle

\begin{abstract}
We study the large time behavior of non-negative solutions to the singular diffusion equation with gradient absorption
$$
\partial_t u-\Delta_{p}u+|\nabla u|^q=0 \quad \hbox{in} \
(0,\infty)\times\real^N,
$$
for $p_c:=2N/(N+1)<p<2$ and $p/2<q<q_*:=p-N/(N+1)$. We prove that
there exists a unique very singular solution of the equation, which
has self-similar form and we show the convergence of general
solutions with suitable initial data towards this unique very
singular solution.
\end{abstract}

\vspace{2.0 cm}

\noindent {\bf AMS Subject Classification:} 35K67, 35K92, 35B40,
35D40.

\medskip

\noindent {\bf Keywords:} large time behavior, singular diffusion, gradient absorption, very singular solutions, $p$-Laplacian, bounded measures.

\newpage

\section{Introduction and results}\label{sec1}

The aim of the present paper is to study the large time behavior of non-negative solutions to the following equation with singular diffusion and gradient absorption:
\begin{equation}\label{eq1}
\partial_{t}u-\Delta_{p}u+|\nabla u|^q=0, \quad
(t,x)\in Q_\infty := (0,\infty)\times\real^N,
\end{equation}
for $p_c:=2N/(N+1)<p<2$ and $p/2<q<q_*:=p-N/(N+1)$. We consider only non-negative initial data
\begin{equation}
u(0,x)=u_0(x), \quad x\in\real^N, \label{inco}
\end{equation}
under suitable decay and regularity assumptions that will be specified later.
Equation \eqref{eq1} presents a competition between the effects of
the two terms: one term of singular diffusion $\Delta_p u := \text{div}\left( |\nabla u|^{p-2} \nabla u \right)$, which in our case is supercritical (that is, $p>p_c=2N/(N+1)$) in order to avoid extinction in finite time, and another term of nonlinear absorption depending on the gradient $|\nabla u|^q$. Due to this competition, interesting mathematical features appear in some ranges of exponents $p$ and $q$.

The qualitative theory of \eqref{eq1} for general exponents $p$ and
$q$ developed very recently; indeed, while there are many (even
classical ones) papers on nonlinear diffusion equations with zero
order absorption, covering almost all possible cases, the study of
the gradient absorption proved to be much more involved and brought
a bunch of very interesting mathematical phenomena, some of them
having been the subject of intensive research in the last decade. As
expected, the first results were obtained in the semilinear case
$p=2$, where the asymptotic behavior for $q>1$ has been identified
in a series of papers \cite{BKaL04, BKL04, BL01, BVDxx, BGK04, GL07,
Gi05}. Finite time extinction was shown to take place for $q\in
(0,1)$ \cite{BLS01, BLSS02, Gi05} while the critical case $q=1$, in
spite of its apparent simplicity, is still far from being fully
understood: only some large-time estimates are available
\cite{BRV97} but no precise asymptotics. Passing to the
$p$-Laplacian is a natural step, and for the slow-diffusion case
$p>2$, the exponent $q=p-1$ proved to have a very interesting
critical effect, as an interface between absorption-dominated
behavior and diffusion-dominated behavior \cite{BtL08, LV07}, while
itself gives rise to a critical regularized sandpile-type behavior,
as shown recently in \cite{ILV}. A natural next step was then to
pass to the study of the fast-diffusion case $1<p<2$, where the
authors made important progress recently in understanding the decay
rates and typical self-similar profiles \cite{IL1, IL2}. In
particular, finite time extinction was shown to take place when
$(p,q)$ ranges in $(p_c,2)\times (0,p/2)$ and in $(1,p_c)\times
(0,\infty)$ while diffusion is likely to govern the large time
dynamics when $(p,q)\in (p_c,2)\times (q_*,\infty)$. The
intermediate range $(p,q)\in (p_c,2)\times (p/2,q_*)$ features a
balance between the diffusion and absorption terms and is the focus
of this paper.

From now on, we restrict ourselves to the following range of
exponents:
\begin{equation}
p\in (p_c,2) \;\;\text{ and }\;\; q\in \left( \frac{p}{2} , q_* \right)\,, \label{rexp}
\end{equation}
and we set
\begin{equation}
\alpha := \frac{p-q}{2q-p}>0\,, \quad \beta := \frac{q-p+1}{2q-p}>0  \;\;\text{ and }\;\; \eta := \frac{1}{N(p-2)+p} > 0\,, \label{expas}
\end{equation}
the positivity of $\eta$ being a consequence of $p>p_c$. We also observe that, thanks to \eqref{rexp},
\begin{equation}
\alpha - N \beta = \frac{(N+1)(q_*-q)}{2q-p}>0\,. \label{expas2}
\end{equation}

In order to state the main result concerning the large-time
behavior, we recall a special category of solutions to \eqref{eq1},
that are called \emph{very singular solutions}. These are solutions
to \eqref{eq1} with an initial trace at $t=0$ more concentrated at
the origin than a Dirac mass, thus justifying the name. The precise
definition is given in Definition~\ref{def.VSS} at the beginning of
Section~\ref{sec4}.

The name \emph{very singular solution} has been introduced in
\cite{BPT86} for the heat equation with absorption of order zero.
After this first paper, many other very singular solutions for
diffusion equations with absorption terms were constructed, see
\cite{CQW07, KV92, Le96, PW88, Sh04, Zh94} and the references
therein. For \eqref{eq1}, we have established in \cite[Theorem
1.1]{IL2} the existence and uniqueness of such a very singular
solution to \eqref{eq1}, under the more restrictive hypothesis of
radial symmetry and self-similarity. We recall this result for the
reader's convenience as Theorem~\ref{th.VSSunique} below. For the
moment, let us denote this unique radially symmetric, self-similar
very singular solution by $U$ with
\begin{equation}\label{eq.selfVSS}
U(t,x):=t^{-\a}f_U(xt^{-\b}) , \qquad (t,x)\in Q_\infty .
\end{equation}
The main result about large time behavior is the following:

\begin{theorem}\label{th.asympt}
Let $u_0$ be a function such that
\begin{equation}
u_0\in L^1(\real^N)\cap W^{1,\infty}(\real^N)\,, \quad u_0\ge 0\,,
\quad u_0\not\equiv 0\,. \label{wp1}
\end{equation}
and
\begin{equation}\label{wp111}
\lim\limits_{|x|\to\infty}|x|^{\a/\b}u_0(x)=0.
\end{equation}
Then, the following large time behavior holds true:
\begin{equation}\label{asympt}
\lim\limits_{t\to\infty}t^{\a} \|u(t)-U(t)\|_\infty=0,
\end{equation}
where $U$ is the unique radially symmetric
self-similar very singular solution to \eqref{eq1} introduced in
\eqref{eq.selfVSS}.
\end{theorem}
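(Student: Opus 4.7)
The natural strategy is to pass to self-similar variables, establish relative compactness of the rescaled trajectory, identify each $\omega$-limit element as a very singular solution, and conclude via the uniqueness Theorem~\ref{th.VSSunique}. Set
\[
v(s,y) := e^{\a s}\, u(e^s, e^{\b s} y), \qquad s = \ln t, \quad y = x t^{-\b},
\]
so that $v$ solves the autonomous rescaled equation
\[
\partial_s v - \Delta_p v + |\nabla v|^q - \a v - \b\, y\cdot\nabla v = 0 \quad \text{on } (0,\infty)\times\RR^N,
\]
the profile $f_U$ is a stationary solution, and \eqref{asympt} is equivalent to $\|v(s)-f_U\|_\infty \to 0$ as $s\to\infty$.

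First, I would collect uniform estimates on $v$. The $L^\infty$ and gradient decay rates $\|u(t)\|_\infty \lesssim t^{-\a}$ and $\|\nabla u(t)\|_\infty \lesssim t^{-(\a+\b)}$ established in \cite{IL1,IL2} translate into uniform $W^{1,\infty}$ bounds on $v(s,\cdot)$ for $s \ge s_0$. Using the tail assumption \eqref{wp111} together with a comparison argument against a stationary supersolution of the rescaled equation (for instance a suitable dilation of $f_U$), one should obtain a uniform tail estimate
\[
\sup_{s \ge s_0} |y|^{\a/\b}\, v(s,y) \longrightarrow 0 \quad \text{as } |y|\to\infty.
\]
Combined with standard interior $C^{1,\mu}$ parabolic regularity for the singular $p$-Laplacian, this yields relative compactness of the orbit $\{v(s+\cdot,\cdot) : s \ge s_0\}$ in $C_{\mathrm{loc}}(\RR;\, C_0(\RR^N))$.

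Next, I would analyze the $\omega$-limit set. Along any sequence $s_n \to \infty$, extract a subsequence such that $v(s_n+\tau,y) \to w(\tau,y)$. The limit $w$ is a non-negative entire bounded solution of the rescaled equation inheriting the same tail decay, and unscaling produces a solution $W(t,x) := t^{-\a} w(\ln t,\, x t^{-\b})$ of \eqref{eq1} on $Q_\infty$. The crucial point is that $W$ must be a very singular solution: since $u(t) \in L^1(\RR^N)$ admits a positive lower bound for its $L^1$-norm for large $t$ (by comparison from below with $U$ on suitable balls), the rescaled mass
\[
\|v(s)\|_1 = e^{(\a-N\b)s}\, \|u(e^s)\|_1 \longrightarrow \infty
\]
by \eqref{expas2}, which rules out any bounded Radon measure as initial trace of $W$. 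Hence $W$ satisfies Definition~\ref{def.VSS}, and a radial symmetrization argument combined with Theorem~\ref{th.VSSunique} then identifies $w(\tau,\cdot) \equiv f_U$ independently of $\tau$ and of the sequence $s_n$, yielding \eqref{asympt}.

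The main obstacle lies in the identification step. While the $L^1$-blow-up above excludes source-type (Barenblatt) candidates, one must still show that $W$ carries a sufficiently concentrated initial trace at $t=0^+$ to match the VSS definition, and that the uniqueness in Theorem~\ref{th.VSSunique} applies even though $w$ is a priori neither self-similar nor radial. Passing from the abstract entire solution $w$ to the self-similar profile $f_U$ will likely require either a monotonicity/Lyapunov argument in the $\tau$-variable, or an extension of the VSS uniqueness to the wider class of (possibly non-self-similar) solutions with tail decay $|y|^{\a/\b} w \to 0$.
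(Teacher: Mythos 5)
Your overall strategy—passing to self-similar variables, establishing compactness, identifying $\omega$-limits as very singular solutions, and invoking uniqueness—is essentially the right one and aligns closely with the paper's. The ``main obstacle'' you flag at the end, however, is already resolved by the paper in a result you have at your disposal: Theorem~\ref{th.VSSgeneral} states precisely that the very singular solution of Definition~\ref{def.VSS} is unique in the full class (no radial symmetry or self-similarity assumed), and that it must coincide with $U$. So once you have shown that the unrescaled limit $W(t,x)=t^{-\alpha}w(\ln t, xt^{-\beta})$ is a very singular solution in the sense of Definition~\ref{def.VSS}, the identification $w\equiv f_U$ is immediate; there is no need to devise a monotonicity or Lyapunov argument. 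The paper itself packages this step as a comparison principle for the elliptic equation (Theorem~\ref{th.comp}), applied not to $\omega$-limits extracted along subsequences but to the half-relaxed limits $w_*:=\liminf_* v$ and $w^*:=\limsup^* v$. Those are automatically a viscosity supersolution and subsolution of the stationary equation, independent of $s$, which avoids any subsequence extraction; the conclusion $w^*\le f_U\le w_*\le w^*$ then pins down $f_U$. Both routes can be made to work, but the half-relaxed limits technique is cleaner because it sidesteps the need to show that a full eternal solution $w(\tau,y)$ exists and then that its unscaled version carries a very singular initial trace.

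Two of your intermediate steps need repair. First, your justification of the rescaled $L^1$-blowup ``by comparison from below with $U$ on suitable balls'' does not work as stated: $U(t,\cdot)$ is strictly positive on all of $\real^N$, so to compare $u(t_0,\cdot)$ with a multiple of $U(t_0,\cdot)$ from below one would need a lower bound on $u(t_0,\cdot)$ on the whole space, which is unavailable. The paper instead builds (Step~3) an explicit compactly supported subsolution $G_{a,\lambda}$ out of the ODE profiles $g_a$ from \eqref{IVPa}, whose vanishing at a finite radius $R(a)$ makes the comparison with $u$ possible after fitting its $L^\infty$-level to the positive lower bound $m_0$ for $u(1,\cdot)$ on $B_1(0)$. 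Second, to verify the vanishing condition \eqref{VSS2} for the initial trace of $W$ you need the refined tail estimate of Lemma~\ref{lem.point} (which uses \eqref{wp111} to upgrade the bound $\Gamma_{p,q}(|y|)$ to $w(y)\lesssim|y|^{-(\alpha+1)/\beta}$); the crude $\Gamma_{p,q}$-bound alone only gives boundedness of $\int_{\{|x|>r\}}W(t,x)\,dx$ as $t\to 0$, not convergence to zero. These are exactly the estimates assembled in Steps~2--3 of the paper and then fed, together with the lower bound, into the hypotheses of Theorem~\ref{th.comp} in Step~4.
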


In order to prove Theorem~\ref{th.asympt}, several steps are needed,
some of them being also very interesting by themselves. A very
important element of the proof is identifying the possible limits as
$t\to\infty$, that we can prove to be very singular solutions in the
sense of Definition~\ref{def.VSS} by viscosity techniques. Thus, the
circle will be closed by the following general uniqueness result.

\begin{theorem}\label{th.VSSgeneral}
There exists a unique very singular solution to \eqref{eq1} in the sense of Definition~\ref{def.VSS}. In particular, this
solution is radially symmetric and in self-similar form and it
coincides with $U$.
\end{theorem}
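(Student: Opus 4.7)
The plan is to identify any very singular solution $V$ (in the sense of Definition~\ref{def.VSS}) with the radial self-similar very singular solution $U$, by reducing the general case to the already-established uniqueness in the radial self-similar class of Theorem~\ref{th.VSSunique} via a rescaling argument.

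The starting point is the scale invariance of \eqref{eq1}: for every $\lambda > 0$, the rescaled function
\[
V_\lambda(t,x) := \lambda^\alpha V(\lambda t, \lambda^\beta x)
\]
is another solution of \eqref{eq1}, as follows from a direct computation using the exponent relations encoded in \eqref{expas}. I would first verify that $V_\lambda$ is again a very singular solution in the sense of Definition~\ref{def.VSS}: the spatial rescaling by $\lambda^\beta$ preserves the ``more concentrated than a Dirac mass'' initial trace at the origin, the decay rate $|x|^{-\alpha/\beta}$ imposed at spatial infinity is scale-invariant, and the basic properties (non-negativity, non-triviality, regularity for $t>0$) are inherited. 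Thus the entire family $\{V_\lambda\}_{\lambda > 0}$ consists of very singular solutions.

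Next I would establish uniform-in-$\lambda$ parabolic estimates for the family: an $L^\infty$ bound on $[\tau_1,\tau_2] \times \RR^N$ together with suitable gradient and H\"older estimates for the singular $p$-Laplacian on compact subsets of $Q_\infty$, invoking the regularity theory developed in the earlier sections of the paper together with the uniform decay provided by the VSS property and the strict inequality $\alpha - N\beta > 0$ of \eqref{expas2}. By an Arzel\`a--Ascoli argument, along any sequence $\lambda_k \to \infty$ one can extract a subsequential limit $W \in C(Q_\infty)$; the equation passes to the limit, using the viscosity techniques alluded to in the introduction, so that $W$ again solves \eqref{eq1}. The crucial structural observation is that $W$ inherits self-similarity: for each $\mu > 0$ the composition rule $(V_\lambda)_\mu = V_{\lambda\mu}$ together with the compactness of the family yields $W_\mu = W$, which is precisely the scale-invariance relation $W(t,x) = t^{-\alpha} F(x t^{-\beta})$ for some profile $F$. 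A symmetrization argument applied either at the level of $V_\lambda$ (before passing to the limit) or to the profile $F$ directly shows that $W$ is radially symmetric. Moreover, the uniform bounds force $W$ to be itself a very singular solution, so that Theorem~\ref{th.VSSunique} applies and gives $W = U$; since the subsequence was arbitrary, the full family converges: $V_\lambda \to U$ locally uniformly in $Q_\infty$ as $\lambda \to \infty$.

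The main obstacle I expect is the final step, which is to upgrade the asymptotic identification $V_\lambda \to U$ as $\lambda \to \infty$ to the actual pointwise identity $V = U$ on $Q_\infty$. The natural strategy is to carry out the parallel analysis in the regime $\lambda \to 0^+$, where the rescaling zooms in on the behavior of $V$ near its initial singularity. The VSS framework of Definition~\ref{def.VSS} should provide sufficient control on the singular initial layer to identify $\lim_{\lambda \to 0^+} V_\lambda$ with a radial self-similar VSS, which by Theorem~\ref{th.VSSunique} must again coincide with $U$. Combining the two limits with the continuous dependence of $V_\lambda$ on $\lambda$ and the composition rule $V_{\lambda\mu} = (V_\lambda)_\mu$ then forces $V_\lambda \equiv U$ for every $\lambda > 0$; specializing to $\lambda = 1$ yields $V = U$. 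The delicate technical points are the uniform decay estimates at spatial infinity (to prevent leakage of mass when passing to the limit) and the uniform control of the singular initial layer (to transfer the initial trace condition to both limiting VSS and back to $V$), both of which draw essentially on the analysis of \eqref{eq1} in the range \eqref{rexp}.
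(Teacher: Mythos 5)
Your rescaling strategy is genuinely different from what the paper does, and unfortunately it has a gap at the crucial last step. Writing $s=\log\lambda$, the composition rule $(V_\lambda)_\mu=V_{\lambda\mu}$ says that $s\mapsto V_{e^s}$ is an orbit of a one-parameter flow whose fixed points are the self-similar solutions. Your argument shows (granting all the compactness and trace-identification steps) that this orbit converges to $U$ as $s\to-\infty$ and as $s\to+\infty$. But an orbit with identical limits at $\pm\infty$ need not be the constant orbit: nothing rules out a homoclinic excursion, i.e.\ $V_\lambda\neq U$ for intermediate $\lambda$. Continuity in $\lambda$ and the group law give no extra leverage here. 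To upgrade ``asymptotically self-similar at both ends of the time axis'' to ``exactly self-similar'' you need an ordering or a comparison mechanism, and that is precisely what is missing from the proposal. There are also secondary gaps: the radial symmetry of the subsequential limit $W$ is asserted via an unspecified ``symmetrization argument''; and the claim that $W$ is itself a very singular solution requires transferring the conditions \eqref{VSS1}--\eqref{VSS2} through the limit, which in turn requires uniform (in $\lambda$) control on the tails and the initial layer that you only allude to.

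The paper's proof is built around exactly the ordering your argument lacks. It first develops well-posedness of \eqref{eq1} with non-negative bounded measures as initial data (Theorem~\ref{th.uniqfund}), from which fundamental solutions $u_M$ with initial datum $M\delta_0$ are obtained; the increasing limit $\overline U=\sup_M u_M$ is shown to be a radially symmetric, self-similar very singular solution (hence equal to $U$ by Theorem~\ref{th.VSSunique}) that lies below every very singular supersolution (Lemma~\ref{lemma.fs1}, Corollary~\ref{cor.nonumber}). In the opposite direction, the supremum $V$ of all very singular solutions is shown to be a very singular subsolution (a Bardi-type result), and Proposition~\ref{prop.maxVSS} produces a very singular solution above it, forcing $V$ itself to be a very singular solution that is again radially symmetric and self-similar, hence equal to $U$. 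Sandwiching any very singular solution between $\overline U=U$ and $V=U$ then gives uniqueness. The comparison with the friendly giant $\Gamma_{p,q}$ (Proposition~\ref{prop.compFG}) and the local estimates of Proposition~\ref{prop.local} are the tools that control the initial layer and the tails, replacing the compactness bookkeeping you would need. In short, the paper closes the argument with a minimal/maximal sandwich, whereas your proposal only establishes (at best) a two-sided asymptotic identification that does not imply the pointwise identity.
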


This theorem is an important extension of \cite[Theorem 1.1]{IL2},
where the uniqueness of a very singular solution is established
under the extra conditions of radial symmetry and self-similar form.
An interesting by-product of Theorem~\ref{th.VSSgeneral} is a
comparison principle for the elliptic equation
$$
-\Delta_{p}v+|\nabla v|^q-\a v-\b x\cdot\nabla v=0, \quad x\in
\real^N,
$$
under suitable conditions as $|x|\to\infty$. For a precise form of
the statement, we refer the reader to Theorem~\ref{th.comp} below.

On the way to proving Theorem~\ref{th.VSSgeneral}, we found out that
a theory of the Cauchy problem associated to \eqref{eq1} with
non-negative and bounded measures as initial data had to developed.
We thus prove an interesting result of well-posedness for
\eqref{eq1} for such initial data which extends to $p\in (p_c,2)$
the existing one for the semilinear case $p=2$ \cite{BL99, BVDxx}
but holds true only if the singular diffusion equation $\partial_t v
- \Delta_p v = 0$ in $Q_\infty$ is well-posed in this setting.
However, this issue seems to be still an open question for general
non-negative and bounded measures but the answer is positive for
Dirac masses which is exactly what is needed for the proof of
Theorem~\ref{th.VSSgeneral}.

\begin{theorem}\label{th.uniqfund}
Consider a non-negative bounded Borel measure
$u_0\in\cm_{b}^{+}(\real^N)$. If the singular diffusion equation
\begin{eqnarray*}
\partial_t v - \Delta_p v & = & 0 \;\;\text{ in }\;\; Q_\infty\,, \\
v(0) & = & u_0 \;\;\text{ in }\;\; \real^N\,,
\end{eqnarray*}
has a unique solution $v\in C([0,\infty); \cm_{b}^{+}(\real^N)) \cap
C(Q_\infty)$, then there exists a unique non-negative function $u\in
C(Q_\infty)$ which is a viscosity solution to \eqref{eq1} in
$Q_\infty$ and satisfies
\begin{equation}\label{cp.fund}
\lim_{t\to 0} \int_{\real^N} \psi(x)\ u(t,x)\, dx = \int_{\real^N}
\psi(x)\ du_0(x)
\end{equation}
for any bounded and continuous function $\psi\in BC(\real^N)$.
Moreover, $u(t)$ belongs to $L^1(\real^N)\cap W^{1,\infty}(\real^N)$
for all $t>0$ and satisfies
\begin{equation}
\|u(t)\|_1 \le M_0 := \int_{\real^N} du_0(x)\,, \label{evian}
\end{equation}
as well as the following estimates
\begin{equation}\label{wp101}
\|u(t)\|_{1} + t^{N\eta}\|u(t)\|_{\infty} \leq C_s(M_0)\,,
\end{equation}
and
\begin{equation}\label{wp102}
\|\nabla u(t)\|_{\infty}\leq C_s(M_0) \left( 1 +
t^{(N+1)(q_*-q)\eta/(p-q)} \right) t^{-(N+1)\eta}\,,
\end{equation}
where $C_s\in C([0,\infty))$ is a positive function depending only on
$N$, $p$, and $q$.
\end{theorem}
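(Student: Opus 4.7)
The plan is to construct the solution by approximation, derive uniform estimates that pass to the limit, and close the argument with a uniqueness statement based on tight domination by the pure $p$-Laplacian solution. First I would approximate $u_0$ by a sequence $(u_{0,n})$ of smooth non-negative compactly supported functions satisfying $\|u_{0,n}\|_1\le M_0$ and $u_{0,n}\to u_0$ in the narrow topology of measures. The Cauchy theory for \eqref{eq1} with data in $L^1\cap W^{1,\infty}$ developed in \cite{IL1,IL2} then produces, for each $n$, a unique non-negative viscosity solution $u_n\in C([0,\infty);L^1(\real^N))\cap C(Q_\infty)$, which will serve as a building block for $u$.

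The $L^1$ bound \eqref{evian} at the approximate level follows from a formal integration of \eqref{eq1} using $|\nabla u_n|^q\ge 0$. For \eqref{wp101}, the comparison principle yields $0\le u_n\le v_n$ where $v_n$ solves the pure singular $p$-Laplacian equation with datum $u_{0,n}$; the classical fast-diffusion smoothing effect of DiBenedetto-Herrero-V\'azquez, valid in the supercritical range $p>p_c$, then provides $\|v_n(t)\|_\infty\le C(N,p)M_0^{p\eta}t^{-N\eta}$ uniformly in $n$. The gradient estimate \eqref{wp102} is the most technical one: I would derive it by a Bernstein-type argument in the spirit of \cite{IL1,IL2}, the additive correction $t^{(N+1)(q_*-q)\eta/(p-q)}$ reflecting the scaling balance between diffusion and absorption encoded in the self-similar exponents $(\a,\b)$ of \eqref{expas}.

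With these uniform bounds in hand, spatial equicontinuity on compact subsets of $Q_\infty$ is immediate, and inserting the bounds back into \eqref{eq1} supplies time equicontinuity away from $t=0$; an Arzel\`a-Ascoli diagonal extraction then produces a subsequence converging locally uniformly on $Q_\infty$ to some $u\in C(Q_\infty)$ inheriting \eqref{wp101}-\eqref{wp102}. Stability of viscosity solutions under local uniform limits makes $u$ a viscosity solution of \eqref{eq1}. Passing to the limit in $u_n\le v_n$ and invoking the standing uniqueness hypothesis for the $p$-Laplacian equation with datum $u_0$ gives $u\le v$, where $v\in C([0,\infty);\cm_{b}^{+}(\real^N))\cap C(Q_\infty)$ is that unique solution. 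The domination $0\le u(t)\le v(t)$ combined with the narrow continuity $v(t)\rightharpoonup u_0$ at $t=0$ supplies the tightness needed to pass to the limit against arbitrary $\psi\in BC(\real^N)$, yielding the initial trace condition \eqref{cp.fund}.

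The main obstacle is uniqueness, since classical $L^1$-contraction arguments require the solutions to lie in $L^1$ already at $t=0$, which fails as soon as $u_0$ has a singular part. For any viscosity solution $\tilde u$ satisfying \eqref{cp.fund}, a comparison argument first yields $0\le\tilde u\le v$ in $Q_\infty$, using that $v$ is a supersolution of \eqref{eq1} (the absorption term being non-negative) together with the uniqueness of the $p$-Laplacian solution with datum $u_0$. For each $t_0>0$, both $u(t_0)$ and $\tilde u(t_0)$ then lie in $L^1\cap W^{1,\infty}$ with uniform bounds, so the $L^1$-contraction for \eqref{eq1} (obtained via a Kato-type inequality for viscosity solutions) gives $\|u(t)-\tilde u(t)\|_1\le\|u(t_0)-\tilde u(t_0)\|_1$ for $t\ge t_0$. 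Letting $t_0\to 0$, the pointwise inequality $|u(t_0)-\tilde u(t_0)|\le(v(t_0)-u(t_0))+(v(t_0)-\tilde u(t_0))$ exhibits the left-hand side as the sum of two non-negative $L^1$ functions whose narrow limit is the zero measure; since narrow convergence to zero of non-negative integrable functions implies convergence in $L^1$, we get $\|u(t_0)-\tilde u(t_0)\|_1\to 0$ and hence $u\equiv\tilde u$. This is precisely where the hypothesis on the singular $p$-Laplacian is indispensable: it furnishes the single tight upper barrier that sandwiches every admissible solution.
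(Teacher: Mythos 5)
The existence half of your argument follows the paper's outline closely (approximation, uniform $W^{1,\infty}$ bounds from the pure $p$-Laplacian barrier, Arzel\`a--Ascoli, stability of viscosity solutions). One point is under-justified: the domination $0\le u\le v$ together with narrow continuity of $v$ at $t=0$ only gives you tightness and the upper estimate $\limsup_{t\to 0}\int u(t)\psi\le\int\psi\,du_0$ for $\psi\ge 0$; it does not by itself preclude mass from vanishing as $t\to 0$. The paper closes this by a quantitative small-time estimate (Proposition~\ref{pr.wp2b}(a)) based on the weak formulation, which bounds $\big|\int(u^k(t)-u_0^k)\psi\big|$ explicitly in $t$ uniformly in $k$; you would need some version of this to pass simultaneously to the limits $k\to\infty$ and $t\to 0$.

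The serious gap is in the uniqueness step. You invoke an $L^1$-contraction for \eqref{eq1} ``obtained via a Kato-type inequality for viscosity solutions.'' This is not available here. Writing $w=u_1-u_2$, the formal computation gives
\begin{equation*}
\frac{d}{dt}\int_{\real^N}|w|\,dx
= \int_{\real^N}\operatorname{sgn}(w)\big(\Delta_p u_1-\Delta_p u_2\big)\,dx
- \int_{\real^N}\operatorname{sgn}(w)\big(|\nabla u_1|^q-|\nabla u_2|^q\big)\,dx;
\end{equation*}
the first integral is indeed nonpositive by Kato's inequality for $\Delta_p$, but the second has no sign: on $\{u_1>u_2\}$ the relative size of $|\nabla u_1|$ and $|\nabla u_2|$ is uncontrolled, since the absorption is a nonlinearity in $\nabla u$, not a monotone nonlinearity in $u$. (If $L^1$-contraction held, the whole apparatus of Lemma~\ref{lemma.vecineq} and Lemma~\ref{le.stb} in the paper would be superfluous.) The paper instead estimates $\frac{d}{dt}\|w\|_{r+1}^{r+1}$ for small $r>0$: the vector inequality \eqref{vector.ineq}, valid for $q\ge p/2$, absorbs the bad part of the absorption term into the dissipation coming from $\Delta_p$, leaving a Gronwall-type inequality with a rate governed by $\|\nabla u_i\|_\infty^{2q-p}$. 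The gradient bound \eqref{wp102} makes that rate integrable near $t=0$, so the exponential factor stays bounded, and Lemma~\ref{le.stb} supplies a quantitative $\|w(s)\|_{r+1}\to 0$ as $s\to 0$ (one needs a rate here, not just qualitative convergence, because $\|u(s)\|_{r+1}$ and $\|v(s)\|_{r+1}$ individually blow up as $s\to 0$ for $r>0$). Your sandwich observation $|u(t_0)-\tilde u(t_0)|\le (v(t_0)-u(t_0))+(v(t_0)-\tilde u(t_0))$ is correct and cleanly gives $\|u(t_0)-\tilde u(t_0)\|_1\to 0$, but without a valid propagation estimate this cannot be pushed forward in time, so the uniqueness argument does not go through as written.
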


The proof of this theorem is technical and quite involved, as usual
when dealing with measures, since the lack of regularity does not
allow to apply some of the standard techniques. In particular,
Theorem~\ref{th.uniqfund} also implies the existence and uniqueness
of a fundamental solution with any given mass $M>0$ to \eqref{eq1},
as it is explained at the end of Section~\ref{sec3}.

\medskip

\noindent \textbf{Organisation of the paper.} We collect in Section~\ref{sec2} many technical results and estimates
needed in the sequel, in the form of separate lemmas. These include:
a rigorous definition of viscosity solutions, decay estimates,
estimates on the tail of the solution at sufficiently large times,
and estimates of the solutions for small times, which are useful
tools for identifying the initial trace. We agree that this section
is a bit technical, but this allows us to state more clearly the
main ideas and steps in the proofs of our main results. A reader who
is not so interested in technical details could skip this part and
admit the technical lemmas, or come back to it later.

Section~\ref{sec3} is devoted to the proof of
Theorem~\ref{th.uniqfund}. The proof is divided into two steps: we
first construct a solution to \eqref{eq1} by classical approximation
arguments. We next proceed to show the uniqueness of the solution
which is actually the main contribution of this section. We then
pass to the proof of Theorem~\ref{th.VSSgeneral}, which occupies
almost all Section~\ref{sec4} and is divided into several steps: we
first construct a maximal and a minimal element in the class of the
very singular solutions to \eqref{eq1}. Then, we find that these two
solutions are identical, by identifying both of them with the unique
radially symmetric and self-similar very singular solution $U$, and
we end up with the proof of the comparison principle for the
associated elliptic equation. We end the paper with the proof of
Theorem~\ref{th.asympt}, to which Section~\ref{sec5} is devoted. It
relies on the half-relaxed limits technique and is rather short,
since most of the needed technical facts were already done in
previous sections.

\section{Well-posedness and decay estimates}\label{sec2}

In this section, we collect previous results on the well-posedness
of \eqref{eq1} as well as some qualitative properties of the
solutions. Let us first recall the notion of solutions we use
throughout the paper.

\subsection{Viscosity solution}\label{sec2vs}

As in our previous works \cite{IL1, IL2}, a suitable notion of solution for equation~\eqref{eq1} is that of \emph{viscosity solution}, which is useful in
dealing with the gradient term. Due to the singular character of
\eqref{eq1} at points where $\nabla u$ vanishes, the standard
definition of viscosity solution has to be adapted to deal with this
case \cite{IS, JLM, OS}. In fact, it requires to restrict the class
of comparison functions \cite{IS, OS}. More precisely, let $\cf$ be
the set of functions $f\in C^{2}([0,\infty))$ satisfying
$$
f(0)=f'(0)=f''(0)=0, \ f''(r)>0 \ \hbox{for} \ \hbox{all} \ r>0,
\quad \lim\limits_{r\to 0}|f'(r)|^{p-2}f''(r)=0.
$$
For example, $f(r)=r^{\sigma}$ with $\sigma>p/(p-1)>2$ belongs to
$\cf$. We then introduce the class $\ca$ of admissible comparison
functions $\psi$ defined as follows: a function $\psi\in
C^2(Q_{\infty})$ belongs to $\ca$ if, for any $(t_0,x_0)\in
Q_{\infty}$ where $\nabla\psi(t_0,x_0) =0$, there exist a constant
$\delta>0$, a function $f\in\cf$, and a modulus of continuity
$\omega\in C([0,\infty))$, (that is, a non-negative function
satisfying $\omega(r)/r\to 0$ as $r\to 0$), such that, for all
$(t,x)\in Q_{\infty}$ with $|x-x_0|+|t-t_0|<\delta$, we have
$$
|\psi(t,x)-\psi(t_0,x_0)-\partial_t\psi(t_0,x_0)(t-t_0)|\le
f(|x-x_0|)+\omega(|t-t_0|).
$$
With these notations, viscosity solutions to \eqref{eq1} are defined as follows \cite{IS, JLM, OS}:

\begin{definition}\label{def.visc}
An upper semicontinuous function $u:Q_{\infty}\to\real$ is a
viscosity subsolution to \eqref{eq1} in $Q_{\infty}$ if, whenever
$\psi\in\ca$ and $(t_0,x_0)\in Q_{\infty}$ are such that
\begin{equation*}
u(t_0,x_0)=\psi(t_0,x_0), \quad u(t,x)<\psi(t,x) \ \mbox{for all}\
(t,x)\in Q_{\infty}\setminus\{(t_0,x_0)\},
\end{equation*}
then
\begin{equation*}
\left\{\begin{array}{ll}\partial_t\psi(t_0,x_0)\leq\Delta_{p}\psi(t_0,x_0)-|\nabla\psi(t_0,x_0)|^{q}
&
\ \hbox{if} \ \nabla\psi(t_0,x_0)\neq0,\\
\partial_t \psi(t_0,x_0)\leq 0 & \ \hbox{if} \
\nabla\psi(t_0,x_0)=0.\end{array}\right.
\end{equation*}
A lower semicontinuous function $u:Q_{\infty}\to\real$ is a
viscosity supersolution to \eqref{eq1} in $Q_{\infty}$ if, whenever
$\psi\in\ca$ and $(t_0,x_0)\in Q_{\infty}$ are such that
\begin{equation*}
u(t_0,x_0)=\psi(t_0,x_0), \quad u(t,x)>\psi(t,x) \ \mbox{for all}\
(t,x)\in Q_{\infty}\setminus\{(t_0,x_0)\},
\end{equation*}
then
\begin{equation*}
\left\{\begin{array}{ll}\partial_t\psi(t_0,x_0)\geq\Delta_{p}\psi(t_0,x_0)-|\nabla\psi(t_0,x_0)|^{q}
&
\ \hbox{if} \ \nabla\psi(t_0,x_0)\neq0,\\
\partial_t \psi(t_0,x_0)\geq 0 & \ \hbox{if} \
\nabla\psi(t_0,x_0)=0.\end{array}\right.
\end{equation*}

A continuous function $u:Q_{\infty}\to\real$ is a viscosity solution to \eqref{eq1} in $Q_{\infty}$ if it is a viscosity subsolution and supersolution.
\end{definition}

A remarkable feature of this modified definition is that basic
results about viscosity solutions, such as comparison principle and stability property, are still valid, see \cite[Theorem 3.9]{OS}
(comparison principle) and \cite[Theorem 6.1]{OS} (stability). The
relationship between viscosity solutions and other notions of
solutions is investigated in \cite{JLM}. From now on, by a solution
to \eqref{eq1} we mean a viscosity solution in the sense of
Definition~\ref{def.visc} above.

With this notion of solution to \eqref{eq1}, we have the following well-posedness result \cite[Theorem~6.2]{IL1}.

\begin{proposition}\label{pr.wp1}
Assume that $u_0$ is a function satisfying the conditions
\eqref{wp1}. Then there exists a unique non-negative function $u\in
C([0,\infty)\times\real^N)$ which is a viscosity solution to
\eqref{eq1} in $Q_\infty$ and satisfies $u(0)=u_0$. In addition,
$u(t)\in L^1(\real^N)\cap W^{1,\infty}(\real^N)$ for each $t>0$ and
$u$ is also a weak solution to \eqref{eq1}-\eqref{inco} in the
following sense:
\begin{equation}\label{def.weak}
\int_{\real^N} (u(t,x) -u(s,x)) \psi(x) \,dx + \int_s^t\int_{\real^N} \left(|\nabla u|^{p-2} \nabla u \cdot \nabla\psi + |\nabla u|^q \psi \right)\,dx\,d\tau=0,
\end{equation}
for any $0\leq s<t<\infty$ and $\psi\in C_0^{\infty}(\real^N)$.
\end{proposition}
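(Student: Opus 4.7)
The plan is to reduce the proof to the framework of \cite{IL1} by an approximation argument combined with the general well-posedness theory for viscosity solutions to singular parabolic equations of $p$-Laplacian type developed in \cite{OS, JLM}. Concretely, I would first regularize the equation by introducing, for $\varepsilon\in(0,1)$, the non-degenerate problem
\begin{equation*}
\partial_t u_\varepsilon - \mathrm{div}\bigl((|\nabla u_\varepsilon|^2+\varepsilon)^{(p-2)/2}\nabla u_\varepsilon\bigr) + (|\nabla u_\varepsilon|^2+\varepsilon)^{q/2} = 0
\end{equation*}
in $Q_\infty$, with a smooth, non-negative, compactly supported approximation $u_{0,\varepsilon}$ of $u_0$ that converges to $u_0$ in $L^1(\real^N)\cap W^{1,\infty}(\real^N)$ and satisfies $\|u_{0,\varepsilon}\|_\infty\le\|u_0\|_\infty$ and $\|\nabla u_{0,\varepsilon}\|_\infty\le\|\nabla u_0\|_\infty$. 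Standard parabolic theory produces a smooth, non-negative classical solution $u_\varepsilon$.

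Next, I would derive uniform bounds in $\varepsilon$. The comparison principle against spatially constant supersolutions yields $\|u_\varepsilon(t)\|_\infty\le\|u_0\|_\infty$; integrating the equation gives conservation-type control of $\|u_\varepsilon(t)\|_1\le\|u_0\|_1$ (the absorption term only helps). The Lipschitz estimate is the more delicate point: applying Bernstein's technique to $w_\varepsilon=|\nabla u_\varepsilon|^2$, one derives a parabolic inequality in which the absorption term produces a good sign, giving $\|\nabla u_\varepsilon(t)\|_\infty\le\|\nabla u_0\|_\infty$ uniformly. Uniform local Hölder continuity in $(t,x)$ then follows from the $L^\infty$ gradient bound and the equation.

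With these bounds in hand, Arzelà-Ascoli extracts a subsequence $u_\varepsilon\to u$ locally uniformly on $[0,\infty)\times\real^N$ to a non-negative function $u\in C([0,\infty)\times\real^N)\cap L^\infty((0,\infty);L^1\cap W^{1,\infty})$ with $u(0)=u_0$. The stability result \cite[Theorem~6.1]{OS} for the modified viscosity notion from Definition~\ref{def.visc} ensures that $u$ is a viscosity solution of \eqref{eq1}. Passing to the limit in the weak formulation of the regularized equations—using the $L^\infty$ gradient bound to handle $(|\nabla u_\varepsilon|^2+\varepsilon)^{(p-2)/2}\nabla u_\varepsilon$ via dominated convergence and almost everywhere convergence of $\nabla u_\varepsilon$ (which can be obtained by a standard monotonicity argument for the $p$-Laplacian and a.e.\ convergence of $\nabla u_\varepsilon$)—yields the weak identity \eqref{def.weak}.

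Finally, uniqueness is a direct consequence of the comparison principle \cite[Theorem~3.9]{OS} applied to the class $\mathcal A$ of admissible test functions introduced in Section~\ref{sec2vs}: any two solutions with the same continuous initial datum must coincide. The main technical obstacle is precisely the uniform Lipschitz bound for $u_\varepsilon$, since both the diffusion (as $\nabla u_\varepsilon\to 0$) and, when $q<1$, the absorption term are singular in the gradient; the crucial observation is that the absorption term always contributes a term of the correct sign in the Bernstein identity for $|\nabla u_\varepsilon|^2$, which is what makes the gradient estimate uniform in $\varepsilon$ and ultimately allows the passage to the limit in both the viscosity and the weak sense.
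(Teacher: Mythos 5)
The paper does not prove Proposition~\ref{pr.wp1}; it is imported verbatim from \cite[Theorem~6.2]{IL1}, so there is no ``paper's own proof'' to compare against. Your reconstruction follows the standard route one would expect (and which underlies \cite{IL1}): regularize the degenerate diffusion and the gradient absorption, establish uniform $L^1$, $L^\infty$ and Lipschitz bounds, pass to the limit by Arzel\`a--Ascoli, invoke the stability theorem of Ohnuma--Sato for the viscosity formulation, identify the weak formulation by a Minty-type argument for the gradient, and obtain uniqueness from \cite[Theorem~3.9]{OS}. Two small points worth tightening: in the Bernstein computation for $w_\varepsilon=|\nabla u_\varepsilon|^2$, the absorption term does not contribute a sign-definite quantity but rather a first-order drift $q(|\nabla u_\varepsilon|^2+\varepsilon)^{(q-2)/2}\nabla u_\varepsilon\cdot\nabla w_\varepsilon$ that simply vanishes where $w_\varepsilon$ is maximal, so ``good sign'' should be ``no contribution at the maximum''; and you should check that the cut-off used to make $u_{0,\varepsilon}$ compactly supported can be arranged so that $\|\nabla u_{0,\varepsilon}\|_\infty$ stays controlled while still giving $L^1$ convergence (e.g.\ choose the cut-off scale so its gradient is of size $o(1)$ and absorb the error). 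With these caveats the plan is sound and consistent with the cited result.
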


As usual for homogeneous parabolic equations, the radial symmetry and monotonicity are preserved, as the following result states.

\begin{lemma}\label{lem.rs}
If $u_0$ satisfies \eqref{wp1} and is radially symmetric and
non-increasing with respect to $|x|$, then the same properties hold true for $u(t)$, for any $t>0$.
\end{lemma}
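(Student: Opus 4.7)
The plan is to establish radial symmetry first by a uniqueness argument based on the rotational invariance of \eqref{eq1}, and then radial monotonicity by a reflection (moving-plane) argument combined with a comparison principle on half-spaces.

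\textbf{Step 1: radial symmetry via uniqueness.} For any rotation $R\in O(N)$, I would observe that the operators $\Delta_p$ and $|\nabla\cdot|^q$ are invariant under orthogonal transformations, so that $v(t,x):=u(t,Rx)$ is a viscosity solution to \eqref{eq1} in $Q_\infty$ belonging to $C([0,\infty)\times\real^N)$ with $v(t)\in L^1(\real^N)\cap W^{1,\infty}(\real^N)$ for each $t>0$. Since $u_0$ is radially symmetric, $v(0,\cdot)=u_0$, and the uniqueness statement in Proposition~\ref{pr.wp1} forces $v\equiv u$. Hence $u(t,\cdot)$ is radially symmetric for every $t>0$.

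\textbf{Step 2: monotonicity via reflection.} For a unit vector $e\in\real^N$ and $\lambda>0$, I would denote by $\sigma_\lambda$ the reflection across the hyperplane $\Pi_\lambda:=\{x\,:\,x\cdot e=\lambda\}$, so that $\sigma_\lambda(x)=x-2(x\cdot e-\lambda)e$. A direct computation gives
\begin{equation*}
|\sigma_\lambda(x)|^2 = |x|^2 - 4\lambda(x\cdot e-\lambda),
\end{equation*}
which is strictly smaller than $|x|^2$ on the half-space $H_\lambda:=\{x\,:\,x\cdot e>\lambda\}$. Since $u_0$ is radially non-increasing, this yields $u_0(\sigma_\lambda(x))\ge u_0(x)$ for $x\in H_\lambda$. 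By invariance of \eqref{eq1} under reflections, the function $v(t,x):=u(t,\sigma_\lambda(x))$ is again a viscosity solution of \eqref{eq1}, coincides with $u$ on $(0,\infty)\times\Pi_\lambda$, and dominates $u$ initially on $H_\lambda$. A comparison principle on the cylinder $(0,\infty)\times H_\lambda$ (with matching Dirichlet data on $\Pi_\lambda$ and controlled behaviour at infinity coming from the $L^1\cap W^{1,\infty}$ bounds of Proposition~\ref{pr.wp1}) would then give $u(t,x)\le u(t,\sigma_\lambda(x))$ on $(0,\infty)\times H_\lambda$. For any $0<|x|<|y|$ with $y$ parallel to $x$ (the only case to consider, thanks to Step~1), the choice $e=x/|x|$ and $\lambda=(|x|+|y|)/2$ places $y$ in $H_\lambda$ and satisfies $\sigma_\lambda(y)=x$, so $u(t,y)\le u(t,x)$ as desired.

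\textbf{Main obstacle.} The delicate point is the comparison principle on the half-space $H_\lambda$: the comparison theorem quoted from \cite[Theorem~3.9]{OS} is stated on $Q_\infty$ and must be adapted to the unbounded lateral boundary $\Pi_\lambda$. One would carry out the standard doubling-of-variables argument of viscosity theory with an additional $|x|^2$-type penalty to control the behaviour at infinity, relying on the uniform $L^\infty$ bound from Proposition~\ref{pr.wp1}. An alternative, which fits naturally the approximation scheme used to construct $u$ in \cite[Theorem~6.2]{IL1}, is to work with uniformly parabolic regularizations of \eqref{eq1} (e.g.\ replacing $|\nabla u|^{p-2}$ by $(|\nabla u|^2+\varepsilon)^{(p-2)/2}$), apply the classical parabolic maximum principle to $\partial_r u^\varepsilon$ viewed as the solution of a one-dimensional parabolic equation in $r=|x|$, and pass to the limit $\varepsilon\to 0$ to transfer the sign property to $u$.
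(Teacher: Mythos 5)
Your Step~1 (radial symmetry via uniqueness for rotated solutions) is exactly what the paper does. Step~2, however, takes a genuinely different route: you use a moving-plane/reflection argument, while the paper passes to the one-dimensional radial equation satisfied by $u(t,r)$, differentiates it (formally) to obtain the equation for $\partial_r u$, and deduces $\partial_r u\le 0$ by comparing with the zero solution, making the argument rigorous via the same regularizing approximations used in \cite{IL1}. Your reflection argument is geometrically correct: the identities $\sigma_\lambda(y)=x$ and $|\sigma_\lambda(x)|<|x|$ on $H_\lambda$ for $\lambda>0$ are right, and combined with Step~1 it does yield radial monotonicity. What the two routes buy is complementary. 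The paper's differentiation argument is shorter and sits naturally in the radial framework, but is only formal for a singular $p$-Laplacian and therefore also needs an approximation step. Your moving-plane argument does not require differentiating the equation or tracking $\partial_r u$ itself, and it would also establish monotonicity with respect to a fixed direction for nonradial, monotone data; the price is a comparison principle on the unbounded cylinder $(0,\infty)\times H_\lambda$, which, as you correctly flag, is not what \cite[Theorem~3.9]{OS} gives directly. You anticipate this obstacle and propose two ways to close it, the second of which — regularize the diffusion, apply the classical parabolic maximum principle to $\partial_r u^\varepsilon$, and let $\varepsilon\to 0$ — is in fact precisely the paper's argument. So the proposal is sound, and it contains the paper's proof as a fallback; just be aware that the comparison-on-half-space step is a genuine technical investment, not a routine citation.
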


\begin{proof}
The radial symmetry of $u(t)$ for positive times $t>0$ follows readily from the rotational invariance of \eqref{eq1} and the well-posedness of \eqref{eq1}. Next, we can write
$u(t,x)=u(t,|x|)=u(t,r)$, and it satisfies
$$
\partial_{t}u-(p-1)|\partial_{r}u|^{p-2}\partial^2_{r}u-\frac{N-1}{r}|\partial_{r}u|^{p-2}\partial_{r}u+|\partial_{r}u|^q=0.
$$
At a formal level, it is clear that the zero function is a solution
to the equation satisfied by $\partial_{r}u$ (which can be derived
by differentiating the above equation for $u$), and the claimed
monotonicity follows from the comparison principle since $\partial_r
u_0\leq0$. Thanks to the uniqueness of solutions to \eqref{eq1},
this argument can be made rigorous by standard approximations, as in
\cite{IL1}.
\end{proof}

A classical property of parabolic equations is that a modulus of
continuity in space entails a modulus of continuity in time. In that
direction, we have the following result which can be proved as
\cite[Lemma~5]{GGK03}.

\begin{lemma}\label{le.wp5}
Consider an initial condition $u_0$ satisfying \eqref{wp1} and let
$u$ be the corresponding solution to \eqref{eq1}-\eqref{inco}.
Assume further that there are $\tau\ge 0$ and $A>0$ such that
$\|\nabla u(t)\|_\infty \le A$ for all $t\in [\tau,\infty)$. Then
there is $C_2>0$ depending only on $N$, $p$, and $q$ such that
\begin{equation}
|u(t,x) - u(s,x)| \le C_2\ \left[ (1+A)\ |t-s|^{1/2} + A^q\ |t-s|
\right]\,, \qquad t>s\ge \tau\,. \label{wp8}
\end{equation}
\end{lemma}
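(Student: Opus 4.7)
The plan is a viscosity barrier argument, in the spirit of \cite{GGK03}, Lemma 5. Fix $s \ge \tau$ and, by translation invariance, take $x_0 = 0$. The spatial Lipschitz bound $\|\nabla u(s')\|_\infty \le A$ for $s' \ge \tau$ immediately gives
$$u(s, 0) - A|y| \le u(s, y) \le u(s, 0) + A|y|, \qquad y \in \real^N.$$
To obtain the upper half of \eqref{wp8}, I would construct a smooth radial supersolution $\Phi^+$ to \eqref{eq1} on $(s,\infty)\times\real^N$ of the form
$$\Phi^+(t,x) = u(s, 0) + A\, G_\varepsilon(|x|) + K_1 (t-s)^{1/2} + K_2 (t-s),$$
where $\varepsilon>0$ is a free parameter and $G_\varepsilon\in C^2([0,\infty))$ is a regularization of $r\mapsto r$ satisfying $G_\varepsilon(r)\ge r$, $G_\varepsilon(0)=\varepsilon$, and $0\le G_\varepsilon'\le 1$, designed so that $\Phi^+$ belongs to the admissible class $\ca$. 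A natural choice is $G_\varepsilon(r) = \varepsilon(1+(r/\varepsilon)^\sigma)^{1/\sigma}$ with $\sigma>p/(p-1)$: its flattening at $r=0$ is of order $r^\sigma$, so $\Phi^+\in\ca$, and a direct computation yields $|\Delta_p G_\varepsilon(|x|)| \le C/\varepsilon$ uniformly in $x$, with $C$ depending only on $N$, $p$, $\sigma$. By construction, $\Phi^+(s,\cdot) \ge u(s,0) + A|\cdot| \ge u(s,\cdot)$.

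The core is then the choice of $K_1, K_2$. Since $|\nabla \Phi^+|\le A$ and $\Delta_p \Phi^+ = A^{p-1}\Delta_p G_\varepsilon(|x|)$, the supersolution inequality $\partial_t \Phi^+ - \Delta_p\Phi^+ + |\nabla\Phi^+|^q \ge 0$ reduces to
$$\frac{K_1}{2(t-s)^{1/2}} + K_2 \ge A^{p-1}\Delta_p G_\varepsilon(|x|),$$
which, with the choice $\varepsilon = (t-s)^{1/2}$, holds for $K_1 = CA^{p-1}$ and $K_2 = 0$. The comparison principle for viscosity solutions (Theorem 3.9 of \cite{OS}) then gives $u(t,0) \le \Phi^+(t,0) = u(s,0) + A\varepsilon + K_1(t-s)^{1/2} \le u(s,0) + C(1+A)(t-s)^{1/2}$, using $A^{p-1} \le 1 + A$ for all $A\ge 0$. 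A mirror subsolution $\Phi^-(t,x) = u(s,0) - AG_\varepsilon(|x|) - K_1(t-s)^{1/2} - K_2(t-s)$ gives the lower bound; here the gradient absorption $|\nabla \Phi^-|^q \le A^q$ works against us in the subsolution inequality and must be balanced by $K_2 \sim A^q$, which is precisely the origin of the $A^q(t-s)$ contribution in \eqref{wp8}.

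The principal obstacle is the design of $G_\varepsilon$: admissibility $\Phi^\pm \in \ca$ forces the flattening at the critical point $x=0$ to be of order $r^\sigma$ with $\sigma > p/(p-1) > 2$, which rules out the usual smoothing $\sqrt{|x|^2+\varepsilon^2}$, and the bound $|\Delta_p G_\varepsilon| \le C/\varepsilon$ must be established with explicit $\varepsilon$-dependence (on the regions $\{|x|\le\varepsilon\}$ and $\{|x|\ge\varepsilon\}$ separately). Once these technicalities are settled and the mild blow-up of $\partial_t\Phi^\pm$ at $t=s$ is dealt with by the shift $(t-s)\mapsto (t-s+h)$ followed by $h\to 0$, the balance $\varepsilon\sim (t-s)^{1/2}$ produces the estimate \eqref{wp8} with constants depending only on $N$, $p$, $q$.
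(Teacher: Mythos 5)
Your proof is correct and matches the approach the paper has in mind: the lemma is stated with a pointer to \cite[Lemma~5]{GGK03}, which is exactly this barrier argument, and you have adapted it appropriately to the singular $p$-Laplacian by replacing the usual $\sqrt{|x|^2+\varepsilon^2}$ smoothing with a $C^2$ regularization that flattens to order $r^\sigma$ ($\sigma>p/(p-1)$) at the critical point, so that $\Phi^\pm$ lie in the admissible class $\ca$, and by exploiting the scaling identity $\Delta_p G_\varepsilon(|x|)=\varepsilon^{-1}\,\Delta_p H(|x|/\varepsilon)$ to obtain $|\Delta_p G_\varepsilon|\le C/\varepsilon$. One small clarification worth making explicit: $\varepsilon$ must be fixed before invoking the comparison principle, so the choice is really $\varepsilon=(t_*-s)^{1/2}$ for a prescribed target time $t_*$, with the super/subsolution inequalities verified on the slab $(s,t_*]$ — this works because the left-hand side $K_1/(2(t'-s)^{1/2})$ is decreasing in $t'$ and thus worst at $t'=t_*$, which is precisely where it still dominates $CA^{p-1}/\varepsilon$.
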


\subsection{Decay estimates}\label{sec2de}

We next recall temporal decay estimates in $L^1(\real^N)$ and $W^{1,\infty}(\real^N)$ which are consequences of the analysis performed in \cite{IL1} and depend on the behavior of the initial data as $|x|\to\infty$.

\begin{proposition}\label{pr.wp2a}
Assume that $u_0$ satisfies \eqref{wp1} and denote the corresponding solution to \eqref{eq1}-\eqref{inco} by $u$. Then there is a constant $C>0$ depending only on $N$, $p$, and $q$ such that
\begin{equation}
|\nabla u(t,x)| \le C \left( \|u(s)\|_\infty^{1/\alpha p} + (t-s)^{-1/p} \right) \left( u(t,x) \right)^{2/p}\,, \quad 0\le s<t\,, \;\; x\in\real^N\,. \label{wp100}
\end{equation}
In addition, if $M$ is such that $M\ge \|u_0\|_1$, then the
estimates \eqref{wp101} and \eqref{wp102} hold true with $C_s(M)$
instead of $C_s(M_0)$.
\end{proposition}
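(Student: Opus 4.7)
My plan is to combine a Bernstein-type gradient estimate (which yields \eqref{wp100}) with the $L^1$--$L^\infty$ smoothing property of the singular $p$-Laplacian (which yields \eqref{wp101}), and then interpolate between the two to obtain \eqref{wp102}. The gradient bound \eqref{wp100} is the heart of the matter and the remaining two estimates follow rather mechanically from it.

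For \eqref{wp100} I would work on a smooth approximation obtained by regularising both the diffusion, replacing $|\nabla u|^{p-2}$ by $(|\nabla u|^2+\e)^{(p-2)/2}$, and the initial datum. For the resulting classical solution $u_\e$ the Bernstein candidate is
$$
z(t,x) := (t-s)\,\frac{|\nabla u_\e(t,x)|^p}{u_\e(t,x)^2} - C_1 (t-s)\, \|u_\e(s)\|_\infty^{1/\a} - C_2,
$$
built by combining the two terms appearing on the right-hand side of \eqref{wp100}. A direct (if lengthy) computation of $\partial_t z - \cl_\e z$ at an interior maximum of $z$, where $\cl_\e$ is the linearisation of the regularised operator at $u_\e$, shows that this difference can be made nonpositive by choosing $C_1$ and $C_2$ large depending only on $N$, $p$, $q$. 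Here one uses crucially that $p>p_c$, so that the $p$-Laplacian terms supply enough dissipation, and that $q>p/2$, so that the gradient absorption $|\nabla u|^q$ contributes only lower-order terms relative to $|\nabla u|^p/u^2$ at the maximum point. The maximum principle then gives $z\le 0$ and hence \eqref{wp100} for $u_\e$, after which passing $\e\to 0$ via the stability of viscosity solutions transfers the bound to $u$ itself.

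For \eqref{wp101}, $\|u(t)\|_1$ is non-increasing in time (test \eqref{def.weak} against $\psi\equiv 1$ and use $|\nabla u|^q\ge 0$), so $\|u(t)\|_1\le \|u_0\|_1\le M$. A Moser/Nash iteration for the supercritical $p$-Laplacian, which goes through precisely because $p>p_c$, i.e.\ $\eta>0$, then turns this $L^1$ bound into the $L^\infty$ smoothing estimate $\|u(t)\|_\infty\le C(M)\, t^{-N\eta}$. For \eqref{wp102} I apply \eqref{wp100} at $s=t/2$ and invoke \eqref{wp101} to control both $\|u(s)\|_\infty$ and $u(t,x)^{2/p}$: the exponent $(N+1)(q_*-q)\eta/(p-q)$ emerges from balancing the two terms $\|u(s)\|_\infty^{1/\a p}$ and $(t-s)^{-1/p}$ inside \eqref{wp100} and raising the result to the power one. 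Finally, replacing $C_s(M_0)$ by $C_s(M)$ with $M\ge M_0$ is essentially free: either one takes the supremum of the original $C_s$ over $[0,M]$, or one redefines $C_s$ to be non-decreasing.

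The main obstacle is the Bernstein computation itself. The regularised diffusion coefficient $(|\nabla u_\e|^2+\e)^{(p-2)/2}$ is uniformly bounded only away from $\{\nabla u_\e = 0\}$, and one must track carefully that the cross terms produced by differentiating both the $p$-Laplacian and the absorption $|\nabla u|^q$ do not overwhelm the good dissipative contribution coming from the Bernstein quantity. The assumption $q>p/2$ is exactly what makes the gradient absorption a genuinely subcritical perturbation of the $p$-Laplacian in this calculation, while $p>p_c$ is what guarantees the positivity of the exponents $\eta$ and the ensuing $L^1$--$L^\infty$ smoothing effect.
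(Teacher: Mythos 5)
Your outline is correct and the logical skeleton (a pointwise gradient bound of Bernstein type, an $L^1$--$L^\infty$ smoothing effect, and interpolation at $s=t/2$) matches the paper's, but you diverge on how the first two ingredients are obtained. The paper does not reprove them: \eqref{wp100} is quoted verbatim from \cite[Theorem 1.3 (i) \& (ii)]{IL1}, and \eqref{wp101} is obtained by the much cheaper observation that $u\le v$ by comparison, where $v$ solves the pure $p$-Laplacian flow \eqref{wp103}--\eqref{wp104}, after which the $L^1$ control and the $L^\infty$ decay $\|v(t)\|_\infty\le C\|u_0\|_1^{p\eta}t^{-N\eta}$ come for free from DiBenedetto--Herrero \cite[Lemma III.6.1, Theorem III.6.2]{DBH90}. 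Your alternative for \eqref{wp101} --- testing \eqref{def.weak} against a cutoff approximating $\psi\equiv 1$ to get $\|u(t)\|_1\le\|u_0\|_1$, then running Moser/Nash iteration directly on the equation with absorption --- does work (the sign of $|\nabla u|^q$ is favourable and the iteration closes because $p>p_c$), but it repeats work that the comparison argument shortcuts. Your Bernstein sketch for \eqref{wp100} is the sort of argument that underlies the cited \cite{IL1} estimate; if you carry it out, be careful on two points the sketch glosses over: the quantity $|\nabla u_\e|^p/u_\e^2$ is singular where $u_\e$ vanishes, so one must either establish a local positivity lower bound first or replace $u_\e$ by $u_\e+\delta$ and let $\delta\to 0$ at the end; and the exponent $1/\alpha$ on $\|u_\e(s)\|_\infty$ is not an afterthought --- it is exactly what makes the two terms of the bound consistent under the parabolic scaling, so one has to track it precisely when showing $\partial_t z-\mathcal{L}_\e z\le 0$ at a maximum. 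Finally, your closing remark about replacing $C_s(M_0)$ by $C_s(M)$ by taking $C_s$ non-decreasing is precisely the intended reading of the statement.
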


\begin{proof}
The estimate \eqref{wp100} is a straightforward consequence of
\cite[Theorem~1.3~(i) \&~(ii)]{IL1}, while \eqref{wp101} follows by
comparison with the solution $v$ to the diffusion equation
\begin{eqnarray}
\partial_t v - \Delta_p v & = & 0 \;\;\text{ in }\;\; Q_\infty\,, \label{wp103} \\
v(0) & = & u_0 \;\;\text{ in }\;\; \real^N\,, \label{wp104}
\end{eqnarray}
see \cite{DBH90} for instance. Indeed, we obviously have $u\le v$ in $Q_\infty$ by the comparison principle and, since $p>p_c$, we deduce from \cite[Lemma~III.6.1 \& Theorem~III.6.2]{DBH90} (with $r=1$ and $R=\infty$) that
\begin{equation}
\|v(t)\|_1 \le C\ \|u_0\|_1 \;\;\text{ and }\;\; \|v(t)\|_\infty \le C\ \|u_0\|_1^{p\eta}\ t^{-N\eta} \label{wp300}
\end{equation}
for $t>0$. Finally, \eqref{wp102} readily follows from \eqref{wp100} (with $s=t/2$) and \eqref{wp101}.
\end{proof}

\medskip

For initial data decaying sufficiently fast as $|x|\to\infty$, faster temporal decay estimates were also supplied in \cite[Theorem~1.2]{IL1}, which are only valid when $p$ and $q$ satisfy \eqref{rexp}.

\begin{proposition}\label{pr.wp2}
Assume that $u_0$ satisfies \eqref{wp1} as well as
\begin{equation}
0 \le u_0(x) \le \kappa\ |x|^{-\alpha/\beta}\,, \quad x\in\real^N\,, \label{wp2}
\end{equation}
for some $\kappa>0$, and denote the corresponding solution to
\eqref{eq1}-\eqref{inco} by $u$. Then there is a constant
$K_\kappa>0$ depending only on $N$, $p$, $q$, and $\kappa$ such that
\begin{equation}\label{wp3}
t^{\alpha-N \beta}\|u(t)\|_{1} + t^{\alpha}\|u(t)\|_{\infty} + t^{\alpha+\beta} \|\nabla u(t)\|_{\infty} \leq K_\kappa, \quad t>0.
\end{equation}
\end{proposition}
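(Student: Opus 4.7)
The plan is to establish the three bounds in \eqref{wp3} by comparison with an explicit self-similar super-solution of \eqref{eq1} whose spatial tail matches the pointwise bound \eqref{wp2} on $u_0$. The exponents $(\a,\b)$ defined in \eqref{expas} are tailor-made for this purpose: the algebraic identity $\a+1=(\a+\b)q=(\a+\b)(p-1)+\b$ guarantees that, for any $\tau>0$ and any smooth non-negative profile $F:\real^N\to[0,\infty)$, the function $V(t,x):=(t+\tau)^{-\a}F(x(t+\tau)^{-\b})$ satisfies
$$
\partial_tV-\Delta_pV+|\nabla V|^q=(t+\tau)^{-\a-1}\left\{-\a F-\b\,\xi\cdot\nabla F-\Delta_pF+|\nabla F|^q\right\}\!\Big|_{\xi=x(t+\tau)^{-\b}}.
$$
Hence $V$ is a classical super-solution of \eqref{eq1} as soon as $F$ satisfies the stationary inequality $-\Delta_pF+|\nabla F|^q-\a F-\b\,\xi\cdot\nabla F\ge 0$ in $\real^N$.

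My next step would be to exhibit a smooth, radially symmetric, non-increasing profile $F=F_\kappa$ solving this stationary inequality with the sharp tail $F_\kappa(\xi)\le C(\kappa)(1+|\xi|)^{-\a/\b}$; a convenient Barenblatt-type template is $F_\kappa(\xi)=(A_\kappa+B_\kappa|\xi|^2)^{-\a/(2\b)}$, for which the stationary inequality reduces to an elementary algebraic verification based on the identities above. The three parameters $A_\kappa$, $B_\kappa$ and $\tau=\tau(\kappa)>0$ would then be tuned so that $V(0,\cdot)\ge u_0$ pointwise on $\real^N$, which is possible thanks to \eqref{wp2} together with the boundedness of $u_0$. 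The viscosity comparison principle \cite[Theorem~3.9]{OS} then yields $u\le V$ on $Q_\infty$.

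From this domination the three bounds in \eqref{wp3} follow in a routine way. Evaluating at $x=0$ gives $\|u(t)\|_\infty\le V(t,0)\le K_\kappa t^{-\a}$. The change of variable $\xi=x(t+\tau)^{-\b}$ in $\int_{\real^N}V(t,\cdot)\,dx$, combined with the integrability of $F_\kappa$ on $\real^N$---ensured by the condition $\a-N\b>0$ from \eqref{expas2}---yields $\|u(t)\|_1\le K_\kappa t^{N\b-\a}$. Finally, applying \eqref{wp100} with $s=t/2$ and inserting the just-proved $L^\infty$ bound produces $|\nabla u(t,x)|\le K_\kappa t^{-(1+2\a)/p}$, which coincides with $K_\kappa t^{-(\a+\b)}$ in view of the identity $(1+2\a)/p=\a+\b$ that follows from \eqref{expas}.

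The main subtlety is to make the super-solution argument rigorous within the restricted viscosity framework of Section~\ref{sec2vs}, specifically at the unique point $\xi=0$ where $\nabla F_\kappa$ vanishes and $V$ attains a spatial maximum. There one has $\partial_tV(t,0)=-\a F_\kappa(0)(t+\tau)^{-\a-1}<0$, which would contradict the super-solution requirement $\partial_t\psi\ge 0$ at critical points of Definition~\ref{def.visc} if arbitrary $C^2$ test functions were allowed. This is precisely where the restriction to the admissible class $\ca$ matters: the quasi-flat condition with $f\in\cf$, vanishing faster than quadratically at the origin, forces any $\psi\in\ca$ to be flatter at its own critical points than the quadratic maximum $F_\kappa(\xi)-F_\kappa(0)\sim-c|\xi|^2$ of $V$, so no such $\psi$ can touch $V$ from below at $(t_0,0)$ and the super-solution condition is vacuously satisfied there. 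This is one of the reasons why the modified viscosity framework of \cite{IS,OS} is well-suited to the singular equation \eqref{eq1}.
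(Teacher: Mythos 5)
Your route --- build an explicit self-similar supersolution $V(t,x)=(t+\tau)^{-\a}F_\kappa\bigl(x(t+\tau)^{-\b}\bigr)$ with a bounded, radially decreasing, Barenblatt-type profile of tail $|\xi|^{-\a/\b}$, and compare --- is genuinely different from what the paper does, which is simply to defer to the proofs of \cite[Theorem~1.2~(i) and Lemma~5.1]{IL1}. Those proceed via the friendly giant $\Gamma_{p,q}$ for tail control, integral tail estimates, and the $L^1$--$L^\infty$ smoothing of the $p$-Laplacian; your one-shot supersolution is conceptually cleaner, makes the dependence of $K_\kappa$ on $\kappa$ transparent, and is self-contained. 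The key pieces of the sketch are correct: the self-similar reduction and the identity $\a+1=(\a+\b)q=(\a+\b)(p-1)+\b$, the derivation of the gradient bound from \eqref{wp100} with $s=t/2$ together with the identity $(1+2\a)/p=\a+\b$, and --- most delicately --- the observation that the restricted viscosity framework makes the supersolution test vacuous at the degenerate critical point $\xi=0$: since every $f\in\cf$ has $f(0)=f'(0)=f''(0)=0$, no admissible $\psi\in\ca$ can touch the non-degenerate quadratic maximum of $V$ from below at $(t_0,0)$. That is indeed the raison d'\^etre of the class $\ca$ and you use it correctly.

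Two imprecisions should be fixed in a full write-up. First, the shift parameter $\tau$ cannot depend on $\kappa$ alone: forcing $V(0,\cdot)\ge u_0$ near the origin requires $\tau^{-\a}F_\kappa(0)\ge\|u_0\|_\infty$, and \eqref{wp2} gives no control on $\|u_0\|_\infty$, so $\tau$ must depend on $u_0$. This is harmless for the conclusion --- $(t+\tau)^{-a}\le t^{-a}$ for $a>0$ so the final constant $K_\kappa$ still depends only on $\kappa,N,p,q$ --- but you should say so. Second, the verification that $F_\kappa(\xi)=(A+B|\xi|^2)^{-\a/(2\b)}$ satisfies $-\Delta_pF_\kappa+|\nabla F_\kappa|^q-\a F_\kappa-\b\,\xi\cdot\nabla F_\kappa\ge0$ on all of $\real^N\setminus\{0\}$ is more than an ``elementary algebraic verification.'' It does go through: the $p$-Laplacian and absorption contributions share the same large-$|\xi|$ exponent $-(\a+1)/\b$ precisely because of the identity above, and the linear terms collapse exactly to $-\a A(A+B|\xi|^2)^{-\a/(2\b)-1}$, which decays faster since $\a/\b<p/(2-p)$; choosing $B$ small (so the absorption term dominates the negative part of $-\Delta_pF_\kappa$ uniformly in $|\xi|$) and then $A$ small closes the inequality. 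But this bookkeeping of three competing exponents and two free constants deserves to be carried out, not asserted.
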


The precise dependence of $K_\kappa$ on the parameters is not stated in \cite[Theorem~1.2~(i)]{IL1} but can be recovered by inspecting the proofs of \cite[Theorem~1.2~(i) \& Lemma~5.1]{IL1}.

\subsection{Small time estimates}\label{sec2ste}

The previous decay estimates allow us to analyze precisely the
behavior of solutions to \eqref{eq1} for small times, a fact which
will be of utmost importance when considering non-smooth or even
singular initial data.

\begin{proposition}\label{pr.wp2b}
Assume that $u_0$ satisfies \eqref{wp1} and denote the corresponding solution to \eqref{eq1}-\eqref{inco} by $u$.
\begin{itemize}
\item[(a)]  Let $\psi\in C_0^\infty(\real^N)$ and $T>0$. If $M$ is such that $M\ge \|u_0\|_1$, there exists a constant $C(M,T)>0$ depending only on $N$, $p$, $q$, $M$, and $T$ such that, for $t\in (0,T)$,
\begin{equation}
\begin{split}
& \left| \int_{\real^N} (u(t,x) - u_0(x))\ \psi(x)\, dx \right| \\
\le & C(M,T) \left[ \|\psi\|_\infty\ t^{(N+1)(q_*-q)\eta} + \|\nabla\psi\|_{p/(2-p)}\ t^{1/p} \right]\,.
\end{split} \label{wp200}
\end{equation}
\item[(b)] Let $\psi\in C_0^\infty(\real^N)$ be a non-negative function such that $\psi(x)=0$ for $x\in B_r(0)$ for some $r>0$. If $u_0$ satisfies \eqref{wp2} for some $\kappa>0$, there exists a constant $C(\kappa,r)>0$ depending only on $N$, $p$, $q$, $\kappa$, and $r$ such that, for $t>0$,
\begin{equation}
\int_{\real^N} u(t,x)\ \psi(x)\, dx \le \int_{\real^N} u_0(x)\ \psi(x)\, dx + C(\kappa,r)\|\nabla\psi\|_{p/(2-p)}\ t^{1/p} \,. \label{wp201}
\end{equation}
\end{itemize}
\end{proposition}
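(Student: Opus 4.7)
The plan is to test the weak formulation~\eqref{def.weak} with $s=0$ against $\psi$, which reads
\begin{equation*}
\int_{\real^N}(u(t,x)-u_0(x))\psi(x)\,dx = -\int_0^t\!\!\int_{\real^N}\left(|\nabla u|^{p-2}\nabla u\cdot\nabla\psi + |\nabla u|^q\psi\right)\,dx\,d\tau,
\end{equation*}
and to estimate the two contributions on the right-hand side. The central pointwise tool is~\eqref{wp100} applied at $s=\tau/2$: substituting the $L^\infty$ decay from~\eqref{wp101} in part~(a), respectively from~\eqref{wp3} in part~(b), it reduces to
\begin{equation*}
|\nabla u(\tau,x)| \le C\,\tau^{-1/p}\,u(\tau,x)^{2/p},
\end{equation*}
where the $\tau^{-N\eta/(\alpha p)}$ contribution coming from the $L^\infty$ factor is absorbed into $\tau^{-1/p}$ through the inequality $N\eta<\alpha$, which is equivalent to $q<q_*$. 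The constant $C$ depends on $(M,T)$ in part~(a) (and requires $\tau\in(0,T]$), while in part~(b) it depends only on $\kappa$ and the bound is global in $\tau>0$, since~\eqref{wp3} yields $\|u(\tau/2)\|_\infty^{1/(\alpha p)}\le K_\kappa^{1/(\alpha p)}(\tau/2)^{-1/p}$.

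For part~(a), I raise this pointwise bound to the $(p-1)$-th and $q$-th powers. For the gradient-flux term, H\"older's inequality with exponents $p/(2(p-1))$ and $p/(2-p)$ gives
\begin{equation*}
\int_{\real^N}|\nabla u|^{p-1}|\nabla\psi|\,dx \le C(M,T)\,\tau^{-(p-1)/p}\,\|u(\tau)\|_1^{2(p-1)/p}\,\|\nabla\psi\|_{p/(2-p)};
\end{equation*}
the $L^1$ estimate from~\eqref{wp101} and integration in $\tau$ then produce the $t^{1/p}\|\nabla\psi\|_{p/(2-p)}$ contribution. For the absorption term, I use $2q/p>1$ (from $q>p/2$) to write $\int u^{2q/p}\,dx\le\|u(\tau)\|_\infty^{2q/p-1}\|u(\tau)\|_1$, after which~\eqref{wp101} combined with the algebraic identity $\eta(Np+p-2N)=1$ simplifies the resulting exponent to exactly $-1+(N+1)(q_*-q)\eta$. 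This exponent is strictly larger than $-1$ thanks to $q<q_*$, so integration over $(0,t)$ delivers the $t^{(N+1)(q_*-q)\eta}\|\psi\|_\infty$ contribution.

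Part~(b) benefits from two structural simplifications: the sign condition $\psi\ge 0$ makes $|\nabla u|^q\psi$ non-negative, so this term can be discarded in the upper bound (this is why no $\|\psi\|_\infty$ term appears in~\eqref{wp201}); and the stronger $L^\infty$ decay in~\eqref{wp3} makes the pointwise bound on $|\nabla u|$ \emph{globally} valid in $\tau>0$. The main obstacle is that the $L^1$ estimate from~\eqref{wp3} reads $\|u(\tau)\|_1\le K_\kappa\tau^{-(\alpha-N\beta)}$, which blows up as $\tau\to 0$ and cannot be replaced by $\|u_0\|_1$ (excluded from the allowed dependence of the final constant). To bypass this, I would exploit the support assumption $\supp\psi\subset\{|x|>r\}$ via the pointwise tail estimate $u(\tau,x)\le K_\kappa|x|^{-\alpha/\beta}$, which follows from comparison with the stationary supersolution $V(x)=c|x|^{-\alpha/\beta}$ for $c=\max(\kappa,c_0)$, where $c_0$ depends only on $(N,p,q)$ and is chosen so that $-\Delta_p V+|\nabla V|^q\ge 0$ (the fact that the powers of $|x|$ in $\Delta_p V$ and $|\nabla V|^q$ coincide is an automatic consequence of the definition of $\alpha/\beta$). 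Substituting this tail bound replaces $\|u(\tau)\|_1^{2(p-1)/p}$ in the H\"older estimate by a multiple of $\bigl(\int_{|x|>r}|x|^{-\alpha/\beta}\,dx\bigr)^{2(p-1)/p}$, which is finite because $\alpha/\beta>N$ (again equivalent to $q<q_*$) and depends only on $(N,p,q,r)$. Integration of $\tau^{-(p-1)/p}$ over $(0,t)$ then yields the desired $t^{1/p}\|\nabla\psi\|_{p/(2-p)}$ factor. The underlying theme in all steps is that the integrability of the time factors near $\tau=0$ and the spatial integrability of $|x|^{-\alpha/\beta}$ at infinity trace back to the single hypothesis $q<q_*$ from~\eqref{rexp}.
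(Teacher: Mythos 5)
Your proposal is essentially correct and follows the same route as the paper: test the weak formulation against $\psi$, control $|\nabla u|$ pointwise via \eqref{wp100} combined with the decay of $\|u(\cdot)\|_\infty$, then close with H\"older's inequality in the flux term and an $L^\infty$--$L^1$ interpolation in the absorption term. The identity $-q/p-(2q-p)N\eta/p=-((N+1)q-N)\eta$ and the observation $1-((N+1)q-N)\eta=(N+1)(q_*-q)\eta>0$ are exactly the computations in the paper, and your recognition in part~(b) that $\psi\ge 0$ kills the absorption term and that the $\|u(\tau)\|_1$ blow-up must be bypassed via the pointwise tail $u(\tau,x)\le C(\kappa)|x|^{-\alpha/\beta}$ together with $\mathrm{supp}\,\nabla\psi\subset\{|x|>r\}$ is precisely the paper's strategy.

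Two minor remarks. First, in part~(a) you fold $\tau^{-N\eta/(\alpha p)}$ into $\tau^{-1/p}$ at the cost of a $T$-dependent constant; the paper keeps both powers, integrates each, and obtains $C(M)[1+t^{(N+1)(q_*-q)q\eta/(p-q)}]t^{(N+1)(q_*-q)\eta}$, which only then uses $t<T$ to absorb the bracket. Both give the stated bound, so this is purely cosmetic. Second, for the pointwise tail bound in part~(b) the paper invokes \cite[Eq.~(5.5)]{IL1} directly rather than re-deriving it, but your supersolution argument with $V=c|x|^{-\alpha/\beta}$, $c=\max(\kappa,c_0)$, is sound and is in fact the same mechanism that underlies Lemma~\ref{le.wp6} and Lemma~\ref{le.wp3} of the paper (one applies the comparison principle on $\{|x|>\rho\}$, using $V(\rho)\ge\|u_0\|_\infty$ to control the lateral boundary, then lets $\rho\to 0$). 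It makes the proof self-contained at no real cost.
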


\begin{proof}
\textbf{Case~(a).} Let $\psi\in C_0^\infty(\real^N)$, $T>0$, and $t\in (0,T)$. It follows from \eqref{def.weak} that
\begin{equation}\label{volvic}
\begin{split}
& \left| \int_{\real^N} (u(t,x) - u_0(x))\ \psi(x)\,dx \right| \\
\leq &\int_0^t \int_{\real^N} \left( |\nabla u(s,x)|^{p-1}\
|\nabla\psi(x)| + |\nabla u(s,x)|^{q}\ \psi(x)\right)\,dx\,ds.
\end{split}
\end{equation}
To estimate the gradient terms in the right-hand side of \eqref{volvic}, we first notice that \eqref{wp100} and \eqref{wp101} give for $(s,x)\in Q_\infty$
\begin{eqnarray}
\left| \nabla u(s,x) \right| & \le & C \left[ \left\| u\left( \frac{s}{2} \right) \right\|_\infty^{1/\alpha p} + s^{-1/p} \right] \left( u(s,x)
\right)^{2/p}\,, \nonumber\\
& \le & C(M) \left[ s^{-N\eta/\alpha p} + s^{-1/p} \right] \left( u(s,x)
\right)^{2/p}\,. \label{grad.est.u2}
\end{eqnarray}
Now, we infer from \eqref{wp101} and \eqref{grad.est.u2} that
\begin{align*}
 \int_{\real^N} |\nabla u(s,x)|^q\ |\psi(x)| \,dx
\leq & C(M)\ \|\psi\|_\infty \left[ s^{-qN\eta/\alpha p} + s^{-q/p} \right] \|u(s)\|_\infty^{(2q-p)/p}\ \|u(s)\|_1 \\
\leq & C(M)\ \|\psi\|_\infty \left[ s^{-N\eta/\alpha} + s^{-((N+1)q-N)\eta} \right].
\end{align*}
Observing that
\begin{align*}
& 1 - \frac{N\eta}{\alpha} = \frac{(N+1)(q_*-q)p\eta}{p-q} > 0,\\
& 1-((N+1)q-N)\eta = (N+1)(q_*-q)\eta > 0
\end{align*}
by \eqref{rexp}, we integrate the above inequality over
$(0,t)$ and obtain
\begin{align}
& \int_0^t\int_{\real^N} |\nabla u(s,x)|^{q} |\psi(x)| \,dx\,ds \nonumber \\
\leq & C(M)\ \|\psi\|_{\infty} \left[ t^{(N+1)(q_*-q)p\eta/(p-q)} + t^{(N+1)(q_*-q)\eta} \right] \nonumber \\
\leq & C(M)\ \|\psi\|_{\infty} \left[ 1 + t^{(N+1)(q_*-q)q\eta/(p-q)} \right] t^{(N+1)(q_*-q)\eta}. \label{interm3bis}
\end{align}
Similarly, by \eqref{grad.est.u2} and H\"older's inequality,
\begin{align*}
& \int_{\real^N} |\nabla u (s,x)|^{p-1}\ |\nabla\psi(x)|\,dx \\
&\leq C(M) \left[ s^{-(p-1)N\eta/\alpha p} + s^{-(p-1)/p} \right] \int_{\real^N} (u(s,x))^{2(p-1)/p}\ |\nabla\psi(x)|\,dx\\
&\leq
C(M) \left[ s^{-(p-1)N\eta/\alpha p} + s^{-(p-1)/p} \right] \| u(s)\|_1^{2(p-1)/p}\ \|\nabla\psi\|_{p/(2-p)}\\
&\leq C(M) \left[ 1 + s^{(p-1)(N+1)(q_*-q)\eta/(p-q)} \right] \|\nabla\psi\|_{p/(2-p)}\ s^{-(p-1)/p} ,
\end{align*}
hence, after integrating over $(0,t)$,
\begin{equation}\label{interm2bis}
\begin{split}
& \int_0^t\int_{\real^N} |\nabla us,x)|^{p-1}\ |\nabla\psi(x)|
\,dx\,ds \\
\leq & C(M) \left[ 1 + t^{(p-1)(N+1)(q_*-q)\eta/(p-q)} \right] \|\nabla\psi\|_{p/(2-p)}\ t^{1/p}.
\end{split}
\end{equation}
Combining \eqref{volvic}, \eqref{interm3bis}, and \eqref{interm2bis} gives \eqref{wp200}.

\smallskip

\noindent\textbf{Case~(b).} Let $t>0$ and a non-negative function $\psi\in C_0^\infty(\real^N)$. Since $u_0$ satisfies \eqref{wp2}, it follows from \eqref{wp100} and \eqref{wp3} that, for $(s,x)\in Q_\infty$,
\begin{eqnarray}
\left| \nabla u(s,x) \right| & \le & C \left[ \left\| u\left( \frac{s}{2} \right) \right\|_\infty^{1/\alpha p} + s^{-1/p} \right] \left( u(s,x)
\right)^{2/p}\,, \nonumber\\
& \le & C(\kappa)\ s^{-1/p} \left( u(s,x)
\right)^{2/p}\,. \label{grad.est.u2b}
\end{eqnarray}
Owing to the non-negativity of $\psi$, it follows from \eqref{def.weak} and \eqref{grad.est.u2b} that
\begin{align*}
\int_{\real^N} (u(t,x)-u_0(x))\ \psi(x)\, dx \le & \int_0^t \int_{\real^N} |\nabla u(s,x)|^{p-1}\ |\nabla\psi(x)|\, dx\, ds \\
\le & C(\kappa)\ \int_0^t \int_{\real^N} \left( u(s,x) \right)^{2(p-1)/p}\ |\nabla\psi(x)|\ s^{-(p-1)/p} \, dx\, ds\,.
\end{align*}
We now use again the decay property \eqref{wp2} of $u_0$ together with \cite[Equation~(5.5)]{IL1} to conclude that $u(s,x)\le C(\kappa)\ |x|^{-\alpha/\beta}$ for $(s,x)\in Q_\infty$. Since $\psi$ vanishes in $B_r(0)$ then so does $\nabla\psi$ and, by H\"older's inequality,
\begin{align*}
\int_{\real^N} (u(t,x)-u_0(x))\ \psi(x)\, dx \le & C(\kappa)\ \int_0^t \int_{\{|x|>r\}} |x|^{-2(p-1)\alpha/p\beta}\ |\nabla\psi(x)|\ s^{-(p-1)/p} \,dx\, ds \\
\le & C(\kappa)\ t^{1/p}\ \left( \int_{\{|x|>r\}} |x|^{-\alpha/\beta}\, dx \right)^{2(p-1)/p}\ \|\nabla\psi\|_{p/(2-p)}\,,
\end{align*}
from which \eqref{wp201} follows since $\alpha/\beta>N$ by \eqref{expas2}.
\end{proof}

\subsection{Tail behavior}\label{sec2tb}

We end this section with a control on the tail of solutions to
\eqref{eq1}-\eqref{inco}. We first establish a pointwise estimate by
showing the existence of a universal upper bound (also refered to as
a \emph{friendly giant} in literature), an idea also used in
previous works, see \cite{BKL04, BL01, KV92, VazquezPME} for
instance. We define
\begin{equation}\label{FG}
\Gamma_{p,q}(r):=\gamma\ r^{-\alpha/\beta}, \quad r>0,
\end{equation}
where
\begin{equation}\label{exp.FG}
\gamma:=\frac{q-p+1}{p-q}\ \left( \frac{p-1}{q-p+1} \right)^{1/(q-p+1)},
\end{equation}
and first state some useful properties of $\Gamma_{p,q}$.

\begin{lemma}\label{le.wp6}
For all $r>0$, $\Gamma_{p,q}$ belongs to $L^1(\real^N\setminus B_r(0))$ and $(t,x)\longmapsto \Gamma_{p,q}(|x|-r)$ is a supersolution to \eqref{eq1} in $(0,\infty)\times (\real^N\setminus B_r(0))$.
\end{lemma}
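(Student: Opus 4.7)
The integrability claim follows at once from the fact that $\alpha/\beta = (p-q)/(q-p+1)$, computed from \eqref{expas}, is strictly larger than $N$ thanks to \eqref{expas2}: passing to polar coordinates yields
$$
\int_{\real^N\setminus B_r(0)} \Gamma_{p,q}(|y|)\, dy = \gamma\, \omega_{N-1} \int_r^\infty \rho^{N-1-\alpha/\beta}\, d\rho < \infty.
$$

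For the supersolution property, I would set $V(t,x) := \Gamma_{p,q}(|x|-r)$ on $(0,\infty)\times(\real^N\setminus B_r(0))$. Since $V$ is smooth, time-independent, radially symmetric, and $\nabla V$ does not vanish on this open set, the viscosity notion of supersolution reduces (by Definition~\ref{def.visc}, with $\partial_t V \equiv 0$) to checking the classical pointwise inequality $|\nabla V|^q \ge \Delta_p V$. Writing $s := |x|-r > 0$ and expanding the radial form of the $p$-Laplacian, one finds that $|\nabla V|^q$ produces a single power $s^{-q(\alpha/\beta+1)}$, while
$$
\Delta_p V = \gamma^{p-1}\left(\frac{\alpha}{\beta}\right)^{p-1}\left[(p-1)\left(\frac{\alpha}{\beta}+1\right) s^{-(p-1)(\alpha/\beta+1)-1} - \frac{N-1}{|x|}\, s^{-(p-1)(\alpha/\beta+1)}\right].
$$

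The core algebraic observation is the matching of scaling exponents
$$
(p-1)(\alpha/\beta + 1) + 1 = q(\alpha/\beta + 1),
$$
which is equivalent to $q-p+1 = \beta/(\alpha+\beta)$ and follows from $\alpha+\beta = 1/(2q-p)$ together with \eqref{expas}; in particular $\alpha/\beta + 1 = 1/(q-p+1)$. The specific value of $\gamma$ prescribed by \eqref{exp.FG} is then engineered precisely so that the matching leading coefficients cancel, i.e.\ so that
$$
\gamma^{q-p+1}\left(\frac{\alpha}{\beta}\right)^{q-p+1} = (p-1)\left(\frac{\alpha}{\beta}+1\right),
$$
as one directly verifies by substitution.

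After this cancellation, the only remaining contribution in $|\nabla V|^q - \Delta_p V$ is the curvature remainder $\gamma^{p-1}(\alpha/\beta)^{p-1}\,\frac{N-1}{|x|}\, s^{-(p-1)(\alpha/\beta+1)}$, which is manifestly non-negative, and the supersolution inequality follows. The only delicate point is the algebraic bookkeeping tying together $\alpha$, $\beta$, $p$, $q$, and the explicit value of $\gamma$; no functional-analytic obstacle arises, since $V$ is classical and $\nabla V\ne 0$ throughout $\{|x|>r\}$.
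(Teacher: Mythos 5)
Your proposal is correct and carries out exactly the ``direct computation'' that the paper's one-line proof alludes to, with the monotonicity of $\Gamma_{p,q}$ entering through the sign of the $\frac{N-1}{|x|}$ curvature term, just as the paper intends. The exponent matching $(\alpha/\beta+1)(q-p+1)=1$, the cancellation of leading coefficients via the definition of $\gamma$, and the non-negative remainder are all verified correctly, and since $V$ is $C^2$ with non-vanishing gradient on $\{|x|>r\}$ the classical inequality does give a viscosity supersolution.
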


\begin{proof} The stated integrability of $ \Gamma_{p,q}$ follows from the property $\alpha/\beta>N$, see \eqref{expas2}, while a direct computation and the monotonicity of $\Gamma_{p,q}$ give the second assertion.
\end{proof}

\begin{lemma}\label{le.wp3}
Consider an initial condition $u_0$ satisfying \eqref{wp1} and let $u$ be the corresponding solution to \eqref{eq1}-\eqref{inco}.  Define
\begin{equation}\label{radius.initial}
R(u_0):=\inf\left\{ R>0: \ u_0(x)|x|^{\alpha/\beta}\leq\gamma \ a.\, e. \ \hbox{in} \ \{|x|\geq R\}\right\} \in [0,\infty].
\end{equation}
If $R(u_0)<\infty$, then
\begin{equation}
0 \leq u(t,x) \leq \Gamma_{p,q}(|x|-R(u_0)) \label{wp7}
\end{equation}
for any $t>0$ and $x\in\real^N$ with $|x|>R(u_0)$.
\end{lemma}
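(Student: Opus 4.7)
The plan is to use $\Phi(x):=\Gamma_{p,q}(|x|-R)$, with $R:=R(u_0)$, as a time-independent upper barrier and invoke the comparison principle for viscosity solutions of~\eqref{eq1}. Lemma~\ref{le.wp6} already supplies that $\Phi$ is a supersolution of~\eqref{eq1} on $(0,\infty)\times(\real^N\setminus\overline{B_R(0)})$, so everything reduces to checking appropriate inequalities on the parabolic boundary. Two preliminary pointwise bounds come essentially for free: comparing $u$ with the constant solution $x\mapsto\|u_0\|_\infty$ of~\eqref{eq1} (well-posedness from Proposition~\ref{pr.wp1}) yields $u(t,x)\le\|u_0\|_\infty$ throughout $Q_\infty$; and the definition~\eqref{radius.initial} of $R(u_0)$, combined with the continuity of $u_0$ furnished by~\eqref{wp1}, gives $u_0(x)\le\gamma|x|^{-\alpha/\beta}=\Gamma_{p,q}(|x|)\le\Phi(x)$ for every $|x|>R$ by the monotonicity of $\Gamma_{p,q}$.

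The main obstacle is that $\Phi(x)\to+\infty$ as $|x|\to R^+$, so the comparison principle cannot be applied directly up to the singular boundary $\{|x|=R\}$. I would circumvent this by a two-region splitting based on the radius $\delta_0:=(\gamma/\|u_0\|_\infty)^{\beta/\alpha}$, chosen precisely so that $\Gamma_{p,q}(\delta_0)=\|u_0\|_\infty$. On the inner annular slab $(0,\infty)\times\{R<|x|\le R+\delta_0\}$ the bound is immediate from monotonicity of $\Gamma_{p,q}$ and the $L^\infty$ estimate: $u(t,x)\le\|u_0\|_\infty=\Gamma_{p,q}(\delta_0)\le\Phi(x)$. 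On the complementary outer region $(0,\infty)\times\{|x|>R+\delta_0\}$, both $u$ and $\Phi$ are bounded; the parabolic boundary inequalities $u(0,\cdot)\le\Phi$ on $\{|x|>R+\delta_0\}$ and $u(t,\cdot)\le\|u_0\|_\infty=\Phi$ on $\{|x|=R+\delta_0\}$ follow from the two preliminary bounds and the choice of $\delta_0$, and the viscosity comparison principle from~\cite[Theorem~3.9]{OS} then yields $u\le\Phi$ on this outer region.

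A secondary technicality is that the outer region is unbounded in space. I would resolve this routinely by truncating to $\{R+\delta_0<|x|<K\}$ and exploiting that $u(t,\cdot)\in L^1\cap W^{1,\infty}(\real^N)$, so $u(t,x)\to 0$ as $|x|\to\infty$, while $\Phi$ also vanishes at infinity; applying comparison on the resulting bounded cylinder and letting $K\to\infty$ (with a harmless $\varepsilon$-perturbation of $\Phi$ if needed to strictify the inequality on $\{|x|=K\}$) completes the argument. Assembling the bounds on the inner slab and the outer region then gives~\eqref{wp7}.
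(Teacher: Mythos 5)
Your proof is correct and rests on the same fundamental device as the paper's: the friendly-giant barrier $\Phi(x)=\Gamma_{p,q}(|x|-R(u_0))$ from Lemma~\ref{le.wp6} plus the viscosity comparison principle. The difference is one of execution. The paper applies comparison once on the whole punctured domain $(0,\infty)\times(\real^N\setminus B_{R(u_0)}(0))$, disposing of the lateral boundary $\{|x|=R(u_0)\}$ by simply noting that $\Gamma_{p,q}(0)=+\infty>u$ there; you instead retreat from that singular boundary by inserting the matching radius $\delta_0=(\gamma/\|u_0\|_\infty)^{\beta/\alpha}$, handling the annulus $\{R<|x|\le R+\delta_0\}$ directly via the $L^\infty$ bound $u\le\|u_0\|_\infty=\Gamma_{p,q}(\delta_0)\le\Phi$, and invoking comparison only on $\{|x|>R+\delta_0\}$, where both $u$ and $\Phi$ are bounded. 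This buys you genuine rigor: the comparison theorem \cite[Theorem~3.9]{OS} is not stated for a supersolution that is identically $+\infty$ on part of the parabolic boundary, so the paper's one-step application really hides exactly the regularization you make explicit. You also correctly repair a small slip in the paper's proof, which writes $\gamma|x|^{-\alpha/\beta}=\Gamma_{p,q}(|x|-R(u_0))$ when in fact one needs $\gamma|x|^{-\alpha/\beta}=\Gamma_{p,q}(|x|)\le\Gamma_{p,q}(|x|-R(u_0))$ by monotonicity, as you state. The final truncation-in-space step to handle the unbounded exterior domain is standard and is the same in spirit in both arguments. Your version is thus a more careful, slightly longer rendering of the paper's proof rather than a different route.
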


\begin{proof}
Clearly,
$$
u_0(x) \leq \gamma |x|^{-\alpha/\beta} = \Gamma_{p,q}(|x|-R(u_0))\,, \qquad x\in\real^N\setminus B_{R(u_0)}(0)\,.
$$
In addition, for all $x\in\real^N$ such that $|x| = R(u_0)$ and
$t>0$, we have $\Gamma_{p,q}(|x|-R(u_0)) = \infty > u(t,x)$. Thus,
$u(t,x) \leq \Gamma_{p,q}(|x|-R(u_0))$ on the parabolic boundary of
$(0,\infty)\times (\real^N\setminus B_{R(u_0)}(0))$, and the
comparison principle guarantees that $u(t,x)\leq
\Gamma_{p,q}(|x|-R(u_0))$ in $[0,\infty)\times \real^N\setminus
B_{R(u_0)}(0)$.
\end{proof}

We next prove an integral estimate on the tail behaviour of
solutions to \eqref{eq1}-\eqref{inco}.

\begin{lemma}\label{le.wp4}
Let $u_0$ be an initial condition satisfying \eqref{wp1} and denote the corresponding solution to \eqref{eq1}-\eqref{inco} by $u$. There is $C_0>0$ depending only on $N$, $p$, and $q$ such that, for $R>0$ and $t\ge 0$, there holds
\begin{equation}
\int_{\{|x|\ge R\}} u(t,x)\, dx \le C_0\ R^{(\beta N-\alpha)/\beta}\ \left( \sup_{|x|\ge R/2}{\left\{ u_0(x)\ |x|^{\alpha/\beta} \right\}} + t \ R^{-1/\beta} \right)\,. \label{wp5}
\end{equation}
\end{lemma}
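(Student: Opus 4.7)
The strategy is to test the weak formulation \eqref{def.weak} of \eqref{eq1} against a cutoff of the form $\psi_{R,L}=\zeta_{R,L}^{\theta}$, where $\zeta_{R,L}$ is a standard plateau function and $\theta>1$ is chosen so that, after applying Young's inequality to the $p$-Laplacian contribution, every factor of $\zeta_{R,L}$ cancels exactly, leaving an error term that depends only on $|\nabla\zeta_{R,L}|$ and hence exhibits the correct homogeneity in $R$.

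Assume $\kappa := \sup_{|x|\ge R/2}u_0(x)|x|^{\alpha/\beta}$ is finite (otherwise \eqref{wp5} is trivial). Pick a smooth radial $\zeta_{R,L}\in C_c^\infty(\real^N)$ with $0\le\zeta_{R,L}\le 1$, $\zeta_{R,L}\equiv 1$ on $\{R\le|x|\le L\}$, $\zeta_{R,L}\equiv 0$ on $\{|x|\le R/2\}\cup\{|x|\ge 2L\}$, $|\nabla\zeta_{R,L}|\le C/R$ on the inner annulus $\{R/2\le|x|\le R\}$, $|\nabla\zeta_{R,L}|\le C/L$ on the outer annulus $\{L\le|x|\le 2L\}$, and flat enough at the edges of its support that $\psi_{R,L}:=\zeta_{R,L}^{\theta}\in C_c^\infty(\real^N)$ with
\[
\theta := \frac{q}{q-p+1} = \frac{\alpha+1}{\beta}>1
\]
(the inequality $\theta>1$ uses $p>1$). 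Using $\psi_{R,L}$ in \eqref{def.weak} and bounding $|\nabla u|^{p-2}\nabla u\cdot\nabla\psi_{R,L}\ge -|\nabla u|^{p-1}|\nabla\psi_{R,L}|$ gives
\[
\int_{\real^N}u(t)\psi_{R,L}\,dx + \int_0^t\int_{\real^N}|\nabla u|^q\psi_{R,L}\,dx\,d\tau \le \int_{\real^N}u_0\psi_{R,L}\,dx + \int_0^t\int_{\real^N}|\nabla u|^{p-1}|\nabla\psi_{R,L}|\,dx\,d\tau.
\]

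To the rightmost integrand apply Young's inequality with conjugate exponents $q/(p-1)$ and $\theta$ after the splitting $|\nabla u|^{p-1}|\nabla\psi_{R,L}|=(|\nabla u|^{p-1}\psi_{R,L}^{(p-1)/q})(|\nabla\psi_{R,L}|\psi_{R,L}^{-(p-1)/q})$:
\[
|\nabla u|^{p-1}|\nabla\psi_{R,L}|\le \tfrac12|\nabla u|^q\psi_{R,L} + C\,|\nabla\psi_{R,L}|^\theta\,\psi_{R,L}^{-(p-1)/(q-p+1)}.
\]
Substituting $\psi_{R,L}=\zeta_{R,L}^\theta$, the exponent of $\zeta_{R,L}$ in the error term becomes $\theta(\theta-1)-\theta(p-1)/(q-p+1)$; since $\theta-1=(p-1)/(q-p+1)$ for our choice, this exponent vanishes and the error reduces to $C|\nabla\zeta_{R,L}|^\theta$. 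Absorbing the $\tfrac12$-term into the left and discarding the remaining non-negative integral leads to
\[
\int_{\real^N}u(t)\psi_{R,L}\,dx \le \int_{\real^N}u_0\psi_{R,L}\,dx + Ct\int_{\real^N}|\nabla\zeta_{R,L}|^\theta\,dx.
\]

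It remains to estimate the three integrals and let $L\to\infty$. Since $\zeta_{R,L}=1$ on $\{R\le|x|\le L\}$, the left-hand side dominates $\int_{R\le|x|\le L}u(t,x)\,dx$. The pointwise bound $u_0(x)\le\kappa|x|^{-\alpha/\beta}$ on $\{|x|\ge R/2\}$, combined with $\alpha/\beta>N$ from \eqref{expas2}, yields $\int u_0\psi_{R,L}\,dx\le C\kappa R^{(\beta N-\alpha)/\beta}$. The gradient integral splits over the two annuli as $\int|\nabla\zeta_{R,L}|^\theta\,dx\le C(R^{N-\theta}+L^{N-\theta})$, with $N-\theta=(\beta N-\alpha-1)/\beta<0$ by \eqref{expas2}, so the outer term vanishes as $L\to\infty$. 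Monotone convergence on the left then produces
\[
\int_{\{|x|\ge R\}}u(t,x)\,dx \le C\kappa R^{(\beta N-\alpha)/\beta} + Ct R^{(\beta N-\alpha-1)/\beta} = CR^{(\beta N-\alpha)/\beta}\bigl(\kappa + tR^{-1/\beta}\bigr),
\]
which is \eqref{wp5}. The main obstacle is the Young's inequality step: the exponent $\theta=q/(q-p+1)$ is essentially forced by the requirement that all powers of $\zeta_{R,L}$ cancel, and it is precisely this cancellation that delivers the correct joint scaling in $R$ and $t$.
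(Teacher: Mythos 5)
Your proof is correct and follows the paper's strategy exactly: test the weak formulation against the cutoff raised to the power $q/(q-p+1)$, apply Young's inequality with conjugate exponents $q/(p-1)$ and $q/(q-p+1)$ so that every power of the cutoff cancels, and bound the remaining contributions using the decay hypothesis on $u_0$ together with $\alpha/\beta>N$. The only difference is that the paper uses a cutoff $\zeta_R$ equal to $1$ for $|x|\ge R$ (hence not compactly supported, as \eqref{def.weak} formally requires), whereas you introduce a genuine plateau function $\zeta_{R,L}$ and let $L\to\infty$; this is a minor but welcome technical refinement that keeps the test function strictly within $C_0^\infty(\real^N)$.
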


\begin{proof}
We fix $\zeta\in C^\infty(\real^N)$ such that $0\le \zeta\le 1$ and
\begin{equation}
\zeta(x) = 0 \;\;\text{ if }\;\; |x|\le \frac{1}{2} \;\;\;\text{ and }\;\;\; \zeta(x) = 1 \;\;\text{ if }\;\; |x|\ge 1\,. \label{trunc}
\end{equation}
For $R>0$ and $x\in\real^N$, we define $\zeta_R(x):=\zeta(x/R)$. It follows from the weak formulation of \eqref{eq1} and Young's inequality that
\begin{align*}
& \frac{d}{dt} \int_{\real^N} \zeta_R(x)^{q/(q-p+1)}\ u(t,x)\, dx + \int_{\real^N} \zeta_R(x)^{q/(q-p+1)}\ |\nabla u(t,x)|^q\, dx \\
\le & \frac{q}{q-p+1}\ \int_{\real^N} \zeta_R(x)^{(p-1)/(q-p+1)}\ |\nabla u(t,x)|^{p-1}\ |\nabla\zeta_R(x)|\, dx \\
\le & \frac{p-1}{q-p+1}\ \int_{\real^N} \zeta_R(x)^{q/(q-p+1)}\ |\nabla u(t,x)|^q\, dx + \int_{\real^N} |\nabla\zeta_R(x)|^{q/(q-p+1)}\, dx \,,
\end{align*}
whence
\begin{equation}
\frac{d}{dt} \int_{\real^N} \zeta_R(x)^{q/(q-p+1)}\ u(t,x)\, dx \le C(\zeta)\ R^{(\beta N-\alpha-1)/\beta}\,. \label{fantasio}
\end{equation}
Owing to the properties \eqref{trunc} of $\zeta$, we find, after integrating with respect to time,
\begin{align*}
\int_{\{|x|\ge R\}} u(t,x)\, dx \le & \int_{\real^N} \zeta_R(x)^{q/(q-p+1)}\ u(t,x)\, dx \\
\le & \int_{\real^N} \zeta_R(x)^{q/(q-p+1)}\ u_0(x)\, dx + C(\zeta)\ t\ R^{(\beta N-\alpha-1)/\beta}\\
\le & \sup_{|x|\ge R/2}{\left\{ u_0(x)\ |x|^{\alpha/\beta} \right\}} \ \int_{\{ |x|\ge R/2\}} |x|^{-\alpha/\beta}\, dx + C(\zeta)\ t\ R^{(\beta N-\alpha-1)/\beta}\,,
\end{align*}
from which \eqref{wp5} follows.
\end{proof}

As a consequence of these integral tail estimates, we obtain some precise pointwise estimates for sufficiently rapidly decaying initial data.

\begin{lemma}\label{lem.point}
If $u_0$ satisfies \eqref{wp1} and \eqref{wp2} for some $\kappa>0$
and $u$ denotes the corresponding solution to the Cauchy problem
\eqref{eq1}-\eqref{inco}, then there exists $C>0$ depending on $N$,
$p$, and $q$ such that
\begin{equation}\label{est.point}
|x|^{\a/\b}u(t,x)\leq
C \left(\sup\limits_{|y|\geq|x|/4}\{u_0(y)|y|^{\a/\b}\}+t|x|^{-1/\b}\right)
\end{equation}
for any $x\in\real^N\setminus\{0\}$ and $t>0$.
\end{lemma}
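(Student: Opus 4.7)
The strategy is to rescale to a unit-scale problem and then compare against an explicit radial supersolution on the complement of a small ball. Given $x\ne 0$ and $t>0$, I would set $L:=|x|$ and define
\[
v(\tau,z):=L^{\a/\b}\,u\bigl(L^{1/\b}\tau,\,Lz\bigr),
\]
which, by the scaling invariance of \eqref{eq1} (a consequence of $\a+1=(\a+\b)(p-1)+\b=(\a+\b)q$), is again a viscosity solution of \eqref{eq1} with initial datum $v_0(z)=L^{\a/\b}u_0(Lz)$ satisfying \eqref{wp2} with the same $\kappa$. Moreover, writing $M:=\sup_{|y|\ge L/4}\{u_0(y)|y|^{\a/\b}\}$ for the supremum appearing in the right-hand side of \eqref{est.point}, one gets $v_0(z)\le M|z|^{-\a/\b}$ for every $|z|\ge 1/4$. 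Setting $\hat x:=x/L$ on the unit sphere and $\tau_0:=t\,L^{-1/\b}$, the conclusion \eqref{est.point} reduces to the compact inequality $v(\tau_0,\hat x)\le C(M+\tau_0)$.

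On the exterior cylinder $\Omega:=(0,\infty)\times\{|z|>1/4\}$, I would then compare $v$ with the explicit candidate
\[
W(\tau,z):=\bigl(A_M+B\tau\bigr)\,\bigl(|z|-\tfrac14\bigr)^{-\a/\b},\qquad A_M:=\max\{M,\gamma\},
\]
where $\gamma$ is the constant from \eqref{exp.FG} and $B>0$ is a universal constant to be chosen. Since the prefactor $c(\tau):=A_M+B\tau$ is non-decreasing and stays above $\gamma$ for every $\tau\ge 0$, a direct radial computation in the spirit of the proof of Lemma~\ref{le.wp6} shows that the stationary profile $c(\tau)\,(|z|-1/4)^{-\a/\b}$ is a supersolution of $-\Delta_p u+|\nabla u|^q=0$ on $\{|z|>1/4\}$: the positive $(N-1)/r$ term arising from the shift $r\mapsto r-1/4$ compensates for the bracket $[(c\sigma)^{q-p+1}-(\sigma+1)(p-1)]$, which is $\ge 0$ precisely because $c\ge\gamma$. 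The non-negativity of $\pa_\tau W$ then promotes $W$ to a supersolution of \eqref{eq1} on $\Omega$. Since $W$ blows up on $\{|z|=1/4\}$ and satisfies $W(0,z)\ge M|z|^{-\a/\b}\ge v_0(z)$ on $\{|z|\ge 1/4\}$ (using $|z|-1/4\le|z|$ and $A_M\ge M$), the viscosity comparison principle \cite[Theorem~3.9]{OS} yields $v\le W$ on $\Omega$, and evaluating at $(\tau_0,\hat x)$ gives
\[
v(\tau_0,\hat x)\le (A_M+B\tau_0)\,(3/4)^{-\a/\b}\le C(M+\gamma+\tau_0).
\]

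Translating back leaves the bound $|x|^{\a/\b}u(t,x)\le C\bigl(M+\gamma+t|x|^{-1/\b}\bigr)$, which already matches the conclusion whenever $M+t|x|^{-1/\b}\ge\gamma$. The remaining regime, in which $M$ and $t|x|^{-1/\b}$ are both smaller than $\gamma$, is the main obstacle: the supersolution bound alone is too weak, and the undesired additive constant $C\gamma$ must be absorbed by a sharper argument. I would handle this short-time/small-tail regime by starting from the initial pointwise information $u_0(x)\le M|x|^{-\a/\b}$ and controlling the increment $u(t,x)-u_0(x)$ through the time modulus of continuity of Lemma~\ref{le.wp5} combined with the gradient estimate of Proposition~\ref{pr.wp2a}, which provides a bound of order $t\,|x|^{-(\a+1)/\b}$ and thus delivers the required linear dependence on $M+t|x|^{-1/\b}$. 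The technical core of the proof is therefore twofold: the supersolution verification (where the radial shift produces a harmless positive correction, making the condition $c\ge\gamma$ sufficient), and the removal of the additive constant in the small-data/short-time regime.
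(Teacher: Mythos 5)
Your supersolution computation is correct as far as it goes: the shifted profile $(A_M+B\tau)\,(|z|-1/4)^{-\a/\b}$ is indeed a supersolution of \eqref{eq1} on the exterior cylinder once the coefficient stays at or above $\gamma$, and the comparison principle then delivers $|x|^{\a/\b}u(t,x)\le C\bigl(\max\{M,\gamma\}+t|x|^{-1/\b}\bigr)$. The real problem is the spurious additive constant when $M<\gamma$, and the way you propose to remove it does not work. Lemma~\ref{le.wp5} only gives a time modulus of the form $C_2\bigl[(1+A)|t-s|^{1/2}+A^q|t-s|\bigr]$ for any uniform gradient bound $A$, and the coefficient $1+A\ge 1$ means the $|t-s|^{1/2}$ term survives and dominates as $t\to 0$ no matter how small $A$ is; the increment $u(t,x)-u_0(x)$ therefore cannot be bounded by $C\,t\,|x|^{-(\a+1)/\b}$ through that lemma, and the gradient estimate \eqref{wp100} of Proposition~\ref{pr.wp2a} (which bounds $|\nabla u|$ by a power of $u$, not $u(t)-u_0$) does not repair this. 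There is also no evident variant of your supersolution that would fix things: replacing the prefactor by $M+B\tau$ with $M<\gamma$ makes $-\Delta_p W+|\nabla W|^q$ negative near $\{|z|=1/4\}$, blowing up like $(|z|-1/4)^{-q(\a+\b)/\b}$, which $\pa_\tau W=B(|z|-1/4)^{-\a/\b}$ cannot absorb because $\a/\b<q(\a+\b)/\b$.

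The paper produces the linear-in-$t$ term from a different source, namely the integral tail estimate of Lemma~\ref{le.wp4}. Testing the weak formulation \eqref{def.weak} against $\zeta_R^{q/(q-p+1)}$ and absorbing the flux into the absorption by Young's inequality gives
\[
\frac{d}{dt}\int_{\real^N}\zeta_R(x)^{q/(q-p+1)}u(t,x)\,dx\le C(\zeta)\,R^{(\b N-\a-1)/\b},
\]
so a single time integration yields the clean factor $t$ in \eqref{wp5} with no square root; this is where the linear dependence is generated. Radial monotonicity (Lemma~\ref{lem.rs}) then converts the integral over the annulus $\{|x|/2\le|y|\le|x|\}$ into a pointwise bound on $u(t,x)|x|^N$, and the final reduction step, which you also use, dominates a general $u_0$ by a radially non-increasing $\tilde u_0$ whose tail is a multiple of $M|x|^{-\a/\b}$ and applies the comparison principle. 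The missing ingredient in your argument is thus not the comparison strategy but the mechanism for obtaining linear-in-$t$ growth, which is integral rather than pointwise.
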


\begin{proof}
\noindent \textbf{Step~1.} Let first $u_0$ be radially symmetric and non-increasing with respect to $|x|$. Then, by Lemma~\ref{lem.rs}, $u(t)$ has the same properties for any $t>0$, and for $x\in\real^N$, $x\neq0$ we deduce from Lemma~\ref{le.wp4} that
\begin{equation*}
\begin{split}
Cu(t,x)|x|^N&\leq\int_{\{|x|/2\leq|y|\leq|x|\}}u(t,y)\,dy\\&\leq
C_0\left(\frac{|x|}{2}\right)^{(N\b-\a)/\b}\left(\sup\limits_{|y|\geq|x|/4}\{u_0(y)|y|^{\a/\b}\}+t\left(\frac{2}{|x|}\right)^{1/\b}\right)\\
&\leq2^{(1+\a)/\b}C_0|x|^{(N\b-\a)/\b}\left(\sup\limits_{|y|\geq|x|/4}\{u_0(y)|y|^{\a/\b}\}+t|x|^{-1/\b}\right).
\end{split}
\end{equation*}
which gives \eqref{est.point} for this specific class of initial
data.

\medskip

\noindent \textbf{Step~2.} Fix $x_0\in\real^N\setminus\{0\}$. We
define
$$
\kappa_0 := \sup\limits_{|y|\geq|x_0|/4}\{u_0(y)|y|^{\a/\b}\}\le\kappa
$$
and take $R_0\in (0,|x_0|/4)$ such that $\kappa_0
R_0^{-\a/\b}\geq\|u_0\|_{\infty}$. We define
\begin{equation}
\tilde{u}_0(x):=\left\{\begin{array}{ll}2\kappa_0|x|^{-\a/\b}, \quad
|x|\geq R_0,\\2\kappa_0 R_0^{-\a/\b}, \quad |x|\leq
R_0.\end{array}\right.
\end{equation}
Then $\tilde{u}_0$ is a radially symmetric and non-increasing
function of $|x|$ and it satisfies \eqref{wp1} since $\a/\b>N$ as well as \eqref{wp2} with constant $2\kappa_0$. Moreover,
$u_0\leq\tilde{u}_0$ in $\real^N$, hence the comparison principle
guarantees that $u\leq\tilde{u}$ in $Q_{\infty}$, where $\tilde{u}$ is the solution to \eqref{eq1} with initial condition $\tilde{u}_0$. Applying Step~1 above to $\tilde{u}$ gives
\begin{align*}
|x_0|^{\a/\b} u(t,x_0) \le & |x_0|^{\a/\b} \tilde{u}(t,x_0) \le 2^{(1+\a)/\b} C_0 \left(\sup\limits_{|y|\geq|x_0|/4}\{\tilde{u}_0(y)|y|^{\a/\b}\}+t|x_0|^{-1/\b}\right) \\
\le & 2^{(1+\a)/\b} C_0 \left(  2\kappa_0 + t|x_0|^{-1/\b}\right)\,,
\end{align*}
and thus \eqref{est.point}.
\end{proof}

\section{Well-posedness with non-negative bounded measures as initial data}\label{sec3}

In this section, we prove Theorem~\ref{th.uniqfund}, together with
some preparatory results. We begin with the proof of the existence
statement which will be done, as usual, through an approximation
process.

\begin{proof}[Proof of Theorem~\ref{th.uniqfund}. Existence]
Let $u_0\in\cm_{b}^{+}(\real^N)$ and $(u_0^k)_{k\ge 1}$ be a sequence of functions in $C_0^\infty(\real^N)$ such that
\begin{equation}\label{aprox.cond1}
\|u_0^k\|_{1} = M_0 := \int_{\real^N} du_0\,,
\end{equation}
and
\begin{equation}\label{aprox.cond2}
\lim\limits_{k\to \infty} \int_{\real^N} u_0^k(x) \psi(x) \,dx =
\int_{\real^N} \psi(x)\,du_0(x)\ \ \hbox{for} \ \hbox{any} \
\psi\in BC(\real^N).
\end{equation}
Given $k\ge 1$, we denote the unique solution of \eqref{eq1} with initial condition $u_0^{k}$ by $u^{k}$. Owing to \eqref{aprox.cond1}, it follows from Proposition~\ref{pr.wp2a} that $(u^k)_k$ is bounded in $L^\infty(\tau,\infty;W^{1,\infty}(\real^N))$ for each $\tau>0$. Combining this property with Lemma~\ref{le.wp5} implies the time equicontinuity of the sequence $(u^k)_k$ in $(\tau,\infty)\times\real^N$ for all $\tau>0$. We then deduce from the Arzel\`a-Ascoli theorem that $(u^{k})_k$ is relatively compact in $C([\tau,T]\times K)$ for all compact subsets $K$ of $\real^N$ and $0<\tau< T$. There are thus a subsequence $(u^{k})$ (not relabeled) and a continuous function
$u\in C(Q_\infty)$ such that
\begin{equation}
u^{k}\longrightarrow u \;\;\text {in }\;\; C([\tau,T]\times K)
\;\;\text{ as }\;\; k\to \infty \label{gaston}
\end{equation}
for all compact subsets $K$ of $\real^N$ and $0<\tau<T$. Owing to the stability of viscosity solutions to \eqref{eq1} \cite[Theorem~6.1]{OS}, this convergence guarantees that $u$ is a viscosity solution to \eqref{eq1} in $Q_\infty$. In addition, since $u^k$ satisfies \eqref{wp101} and \eqref{wp102} with the constant $C_s(M_0)$, so does $u$. Consequently, $u(t)$ belongs to
$L^1(\real^N)$ and $W^{1,\infty}(\real^N)$ for all $t>0$.

In order to complete the proof of the existence part, it remains to
identify the initial condition taken by $u$. Consider $t\in (0,1)$, $\psi\in C_0^\infty(\real^N)$, and $k\ge 1$. Owing to \eqref{aprox.cond1}, we are in a position to apply Proposition~\ref{pr.wp2b}~(a) and conclude that
\begin{equation}\label{interm2}
\begin{split}
\left|\int_{\real^N} u^{k}(t,x) \psi(x) \,dx \right.&\left. - \int_{\real^N} u_0^{k}(x) \psi(x)\,dx \right|\\
&\leq C(M_0,1)\ \left( t^{1/p}\  \|\nabla\psi\|_{p/(2-p)} +
t^{(N+1)(q_*-q)\eta}\ \|\psi\|_{\infty} \right).
\end{split}
\end{equation}
Owing to \eqref{aprox.cond2} and \eqref{gaston}, we may let
$k\to\infty$ in \eqref{interm2} to get
\begin{equation*}
\begin{split}
\left|\int_{\real^N} u(t,x) \psi(x) \,dx \right.&\left. -\int_{\real^N} \psi(x) \,du_0(x)\right|\\
&\leq C\ \left( t^{1/p} \|\nabla\psi\|_{p/(2-p)} +
t^{(N+1)(q_*-q)\eta} \|\psi\|_{\infty} \right),
\end{split}
\end{equation*}
from which we readily deduce that
\begin{equation}\label{qqq}
\lim\limits_{t\to 0} \int_{\real^N} u(t,x) \psi(x) \,dx =
\int_{\real^N} \psi(x) \,du_0(x)
\end{equation}
for any $\psi\in C_0^\infty(\real^N)$. In fact, by a classical
density argument, \eqref{qqq} is valid for any continuous function
$\psi\in C_0(\real^N)$ which vanishes as $|x|\to\infty$. Let us now
show that \eqref{qqq} is satisfied for any function $\psi\in
BC(\real^N)$. To this end, let $\zeta\in C^\infty(\real^N)$ be such
that $0\le \zeta\le 1$ and
\begin{equation*}
\zeta(x) = 0 \;\;\text{ if }\;\; |x|\le \frac{1}{2} \;\;\;\text{ and }\;\;\; \zeta(x) = 1 \;\;\text{ if }\;\; |x|\ge 1\,,
\end{equation*}  and $\psi\in BC(\real^N)$. Then, for $R>0$, $\left( 1-\zeta_R^{q/(q-p+1)} \right)\psi$ belongs to $C_0(\real^N)$ and
\begin{align}
& \left| \int_{\real^N} u(t,x) \psi(x) \,dx-\int_{\real^N} \psi(x) \,du_0(x) \right| \nonumber \\
\le & \left| \int_{\real^N} u(t,x) \left( 1-\zeta_R(x)^{q/(q-p+1)} \right) \psi(x) \,dx - \int_{\real^N} \left( 1-\zeta_R(x)^{q/(q-p+1)} \right) \psi(x) \,du_0(x) \right| \nonumber \\
& + \int_{\real^N} u(t,x) \zeta_R(x)^{q/(q-p+1)} \psi(x)\, dx + \int_{\real^N} \zeta_R(x)^{q/(q-p+1)} \psi(x)\, du_0(x) \nonumber \\
\le & \left| \int_{\real^N} u(t,x) \left( 1-\zeta_R(x)^{q/(q-p+1)} \right) \psi(x) \,dx - \int_{\real^N} \left( 1-\zeta_R(x)^{q/(q-p+1)} \right) \psi(x) \,du_0(x) \right| \nonumber \\
& + \|\psi\|_\infty \left( \int_{\real^N} u(t,x) \zeta_R(x)^{q/(q-p+1)} \, dx + \int_{\real^N} \zeta_R(x)^{q/(q-p+1)} \, du_0(x) \right). \label{prunelle}
\end{align}
We now recall that it follows from \eqref{fantasio} that
$$
\int_{\real^N} u^k(t,x) \zeta_R(x)^{q/(q-p+1)} \, dx \le \int_{\real^N} u_0^k(x) \zeta_R(x)^{q/(q-p+1)} \, dx + C(\zeta) t R^{(\b N - \a -1)/\b}
$$
for $t\in (0,1)$ and $k\ge 1$. We then infer from \eqref{aprox.cond2}, \eqref{gaston}, and Fatou's lemma that
\begin{equation}
\int_{\real^N} u(t,x) \zeta_R(x)^{q/(q-p+1)} \, dx \le \int_{\real^N} \zeta_R(x)^{q/(q-p+1)} \, du_0(x) + C(\zeta) t R^{(\b N - \a -1)/\b} \label{lebrac}
\end{equation}
for $t\in (0,1)$.
We then infer from \eqref{qqq}, \eqref{prunelle}, and \eqref{lebrac} that
\begin{equation}
\limsup_{t\to 0} \left| \int_{\real^N} u(t,x) \psi(x) \,dx - \int_{\real^N} \psi(x) \,du_0(x) \right| \le 2\|\psi\|_\infty \int_{\real^N} \zeta_R(x)^{q/(q-p+1)} \, du_0(x). \label{contrex}
\end{equation}
Since $u_0$ is a bounded measure, we then let $R\to\infty$ in
\eqref{contrex} and use the properties of $\zeta$ to conclude that
the left-hand side of \eqref{contrex} vanishes. This ends the proof
of the existence result.
\end{proof}

\medskip

We next turn to the proof of the uniqueness part of Theorem~\ref{th.uniqfund} for which the following two preliminary results are needed. We will first need the following
inequality for vectors in $\real^N$.

\begin{lemma}\label{lemma.vecineq}
If $q\geq p/2$, then there exists $\vartheta=\vartheta(p,q)\in
(0,1]$ such that
\begin{equation}\label{vector.ineq}
(a-b)\cdot(|a|^{p-2}a-|b|^{p-2}b)\geq\vartheta\frac{\left||a|^{q-1}a-|b|^{q-1}b\right|^2}{|a|^{2q-p}+|b|^{2q-p}}\geq\vartheta\frac{\left(|a|^q-|b|^q\right)^2}{|a|^{2q-p}+|b|^{2q-p}},
\end{equation}
for all $(a,b)\in\real^N\times\real^N$.
\end{lemma}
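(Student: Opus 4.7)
The plan is to exploit integral representations of the vector fields $F_p(\xi) := |\xi|^{p-2}\xi$ and $F_q(\xi) := |\xi|^{q-1}\xi$ along the segment joining $b$ to $a$, combine the strict monotonicity of $F_p$ with a matching Lipschitz bound on $F_q$, and glue them via a Cauchy--Schwarz splitting of the exponent $q-1 = \tfrac{p-2}{2} + \tfrac{2q-p}{2}$ that is made legal precisely by the assumption $q\geq p/2$.

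First I would set $\xi_s := sa + (1-s)b$ for $s\in[0,1]$ and write
\[F_p(a)-F_p(b) = \int_0^1 DF_p(\xi_s)[a-b]\,ds,\qquad F_q(a)-F_q(b) = \int_0^1 DF_q(\xi_s)[a-b]\,ds.\]
Since $DF_p(\xi)[v] = |\xi|^{p-2} v + (p-2)|\xi|^{p-4}(\xi\cdot v)\xi$ is symmetric with eigenvalues $|\xi|^{p-2}$ (transverse to $\xi$) and $(p-1)|\xi|^{p-2}$ (along $\xi$), the lower bound $v\cdot DF_p(\xi)[v] \geq (p-1)|\xi|^{p-2}|v|^2$ holds for $1<p<2$; an analogous eigenvalue computation gives $|DF_q(\xi)[v]| \leq q|\xi|^{q-1}|v|$. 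Setting $v=a-b$ and integrating yields
\[(a-b)\cdot(F_p(a)-F_p(b)) \geq (p-1)\,|a-b|^2\,I_p,\qquad |F_q(a)-F_q(b)| \leq q\,|a-b|\,J,\]
with $I_p := \int_0^1 |\xi_s|^{p-2}\,ds$ and $J := \int_0^1 |\xi_s|^{q-1}\,ds$.

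The crucial step is the Cauchy--Schwarz factorization $J \leq I_p^{1/2}\,K^{1/2}$, where $K := \int_0^1 |\xi_s|^{2q-p}\,ds$; this is exactly where $q\geq p/2$ enters, so that $2q-p\geq 0$. Consequently $|\xi_s|^{2q-p} \leq (|a|+|b|)^{2q-p} \leq C(p,q)(|a|^{2q-p}+|b|^{2q-p})$, and hence $K \leq C(p,q)(|a|^{2q-p}+|b|^{2q-p})$. Squaring the upper bound on $|F_q(a)-F_q(b)|$ and dividing by the lower bound on $(a-b)\cdot(F_p(a)-F_p(b))$ delivers the first inequality with an explicit constant, which is then replaced by its minimum with $1$ to fit into $(0,1]$. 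The second inequality is purely algebraic: one checks directly that
\[\bigl||a|^{q-1}a-|b|^{q-1}b\bigr|^2 - \bigl(|a|^q-|b|^q\bigr)^2 = 2|a|^{q-1}|b|^{q-1}\bigl(|a||b| - a\cdot b\bigr) \geq 0\]
by the scalar Cauchy--Schwarz inequality $a\cdot b \leq |a||b|$.

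The main technical obstacle is that $|\xi_s|^{p-2}$ can blow up when $a$ and $b$ are anti-parallel and $\xi_s$ vanishes at some interior $s_0\in(0,1)$. Since $s\mapsto \xi_s$ is affine, $|\xi_s|$ is linear near $s_0$, and the hypothesis $p>1$ forces $p-2>-1$, so the singularity is integrable and the integrals $I_p$ and $J$ remain finite. The boundary cases $a=0$ or $b=0$ reduce to a direct substitution, and the generic case is obtained by first establishing the bound on the open set where $\xi_s\neq 0$ for all $s\in[0,1]$ and then extending to the full configuration by continuity.
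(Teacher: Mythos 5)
Your argument is correct in substance, but it takes a genuinely different route from the paper's. The paper expands $\Lambda(a,b) := (a-b)\cdot(|a|^{p-2}a-|b|^{p-2}b)(|a|^{2q-p}+|b|^{2q-p}) - \vartheta\,\bigl||a|^{q-1}a-|b|^{q-1}b\bigr|^2$ algebraically, uses $a\cdot b\leq|a||b|$ to eliminate the vector structure, and reduces everything to the scalar ratio bound
\[
\frac{\left(|a|^q-|b|^q\right)^2}{\left(|a|^{p-1}-|b|^{p-1}\right)\left(|a|-|b|\right)}\leq C_1\left(|a|^{2q-p}+|b|^{2q-p}\right),
\]
which it cites from Gilding--Peletier. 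You instead differentiate along the segment $\xi_s=sa+(1-s)b$, get a monotonicity lower bound for $F_p$ with weight $\int_0^1|\xi_s|^{p-2}\,ds$, a Lipschitz upper bound for $F_q$ with weight $\int_0^1|\xi_s|^{q-1}\,ds$, and splice them via Cauchy--Schwarz using $q-1 = \tfrac{p-2}{2}+\tfrac{2q-p}{2}$; the hypothesis $q\geq p/2$ then enters through the monotonicity of $x\mapsto x^{2q-p}$ needed to bound $\int_0^1|\xi_s|^{2q-p}\,ds$. Your method avoids the external citation and gives an explicit constant, at the modest cost of the integrability discussion when $\xi_{s_0}=0$, which you handle correctly using $p>1$; the paper's method is shorter but opaque without [GP76].

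One small slip: the operator $DF_q(\xi)$ has eigenvalues $|\xi|^{q-1}$ (on $\xi^\perp$) and $q|\xi|^{q-1}$ (along $\xi$), so the correct Lipschitz bound is $|DF_q(\xi)[v]|\leq\max\{1,q\}\,|\xi|^{q-1}|v|$, not $q|\xi|^{q-1}|v|$. Since in the paper's regime $p/2<q<q_*$ one can have $q<1$, the transverse eigenvalue can dominate and your stated constant $q$ is then too small. Replacing $q$ by $\max\{1,q\}$ everywhere repairs this without affecting the structure, and the final choice of $\vartheta$ simply becomes $\min\bigl\{1,(p-1)/(\max\{1,q\}^2\,C(p,q))\bigr\}$.
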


When $q=1$ and $p\in(1,2]$, this lemma is proved in
\cite[Lemma~A.2]{BIV10}.

\begin{proof}
Consider $(a,b)\in\real^N\times \real^N$, $\vartheta\in (0,1]$, and
define
\begin{equation*}
\begin{split}
\Lambda(a,b)&:=(a-b)\cdot(|a|^{p-2}a-|b|^{p-2}b)\left(|a|^{2q-p}+|b|^{2q-p}\right)-\vartheta\left||a|^{q-1}a-|b|^{q-1}b\right|^2\\
&=\left[|a|^p+|b|^p-(|a|^{p-2}+|b|^{p-2})(a\cdot
b)\right]\left(|a|^{2q-p}+|b|^{2q-p}\right)\\&-\vartheta|a|^{2q}-\vartheta|b|^{2q}+2\vartheta|a|^{q-1}|b|^{q-1}(a\cdot
b)\\
&=\left(|a|^p+|b|^p\right)\left(|a|^{2q-p}+|b|^{2q-p}\right)-\vartheta\left(|a|^{2q}+|b|^{2q}\right)\\
&-\left[|a|^{2q-2}+|b|^{2q-2}+|a|^{p-2}|b|^{2q-p}+|a|^{2q-p}|b|^{p-2}-2\vartheta|a|^{q-1}|b|^{q-1}\right](a\cdot
b).
\end{split}
\end{equation*}
Since $\vartheta\in(0,1]$, we have
\begin{equation*}
\begin{split}
|a|^{2q-2}&+|b|^{2q-2}+|a|^{p-2}|b|^{2q-p}+|a|^{2q-p}|b|^{p-2}-2\vartheta|a|^{q-1}|b|^{q-1}\\
&\geq|a|^{2q-2}+|b|^{2q-2}-2|a|^{q-1}|b|^{q-1}=\left(|a|^{q-1}-|b|^{q-1}\right)^2\geq
0.
\end{split}
\end{equation*}
As $a\cdot b\leq|a||b|$, it follows from the previous inequalities
that
\begin{equation*}
\begin{split}
\Lambda(a,b)&\geq\left(|a|^p+|b|^p\right)\left(|a|^{2q-p}+|b|^{2q-p}\right)-\vartheta\left(|a|^{2q}+|b|^{2q}\right)\\
&-\left[|a|^{2q-2}+|b|^{2q-2}+|a|^{p-2}|b|^{2q-p}+|a|^{2q-p}|b|^{p-2}-2\vartheta|a|^{q-1}|b|^{q-1}\right]|a||b|\\
&\geq\left(|a|^p+|b|^p-|a|^{p-1}|b|-|a| |b|^{p-1}\right)\left(|a|^{2q-p}+|b|^{2q-p}\right)-\vartheta\left(|a|^q-|b|^q\right)^2\\
&\geq\left(|a|-|b|\right)\left(|a|^{p-1}-|b|^{p-1}\right)\left(|a|^{2q-p}+|b|^{2q-p}\right)-\vartheta\left(|a|^q-|b|^q\right)^2.
\end{split}
\end{equation*}
Since $q\geq p/2$, it follows from \cite[Lemma~1]{GP76} that there
is $C_1\geq 1$ depending only on $p$ and $q$ such that
\begin{equation*}
\frac{\left(|a|^q-|b|^q\right)^2}{\left(|a|^{p-1}-|b|^{p-1}\right)
\left(|a|-|b|\right)} \leq C_1 \max\left\{|a|,|b|\right\}^{2q-p}
\leq C_1\left(|a|^{2q-p}+|b|^{2q-p}\right).
\end{equation*}
Consequently, choosing $\vartheta=1/C_1$, we end up with
$\Lambda(a,b)\geq 0$, which implies the first inequality in
\eqref{vector.ineq}. The second inequality then follows easily from the triangular inequality.
\end{proof}

We next estimate the small time behavior of solutions to \eqref{eq1}.

\begin{lemma}\label{le.stb}
Consider $u_0\in\cm^{+}_{b}(\real^N)$ and let $u$ be a non-negative
solution to \eqref{eq1} with initial condition $u_0$. If there
exists a unique non-negative solution $v\in
C([0,\infty);\cm^{+}_{b}(\real^N)) \cap C(Q_\infty)$ to the
diffusion equation \eqref{wp103}-\eqref{wp104} in $Q_\infty$ with
initial condition $u_0$, then, for $t>0$ and $r\in [1,\infty]$,
\begin{equation}
\|u(t)\|_1 \le M_0 := \int_{\real^N} du_0(x)\,, \label{stb0}
\end{equation}
and
\begin{equation}
\| u(t) - v(t) \|_r \le C(M_0) \left( 1 + t^{(N+1)(q_*-q)q\eta/r(p-q)} \right)\ t^{[(N+1)(q_*-q)-N(r-1)]\eta/r}\,. \label{stb1}
\end{equation}
\end{lemma}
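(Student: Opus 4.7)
The plan is to establish both bounds by a standard approximation-plus-interpolation scheme. First, I would pick a sequence $(u_0^k) \subset C_0^\infty(\real^N)$ with $u_0^k \ge 0$, $\|u_0^k\|_1 = M_0$, and $u_0^k \to u_0$ in the weak sense of \eqref{aprox.cond2}, and denote by $u^k$ and $v^k$ the corresponding solutions to \eqref{eq1} and to the pure $p$-Laplacian equation \eqref{wp103}--\eqref{wp104}. Since the absorption term $|\nabla u^k|^q$ is non-negative, the comparison principle gives $0 \le u^k \le v^k$ in $Q_\infty$.

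At the smooth level I would first derive the mass-dissipation identity
\begin{equation*}
\|u^k(t)\|_1 + \int_0^t \int_{\real^N} |\nabla u^k(s,x)|^q\,dx\,ds = M_0,
\end{equation*}
by testing the weak formulation \eqref{def.weak} against the cutoff $\zeta_R$ from \eqref{trunc} and passing to the limit $R \to \infty$, relying on the decay estimates of Proposition~\ref{pr.wp2a} to make the $p$-Laplacian flux term vanish. Together with the mass conservation $\|v^k(t)\|_1 = M_0$ for the pure $p$-Laplacian in the supercritical range $p > p_c$ and the sign $v^k \ge u^k$, this produces
\begin{equation*}
\|v^k(t) - u^k(t)\|_1 = \int_0^t \int_{\real^N} |\nabla u^k(s,x)|^q\,dx\,ds,
\end{equation*}
and I would bound the right-hand side by repeating the chain of inequalities that led to \eqref{interm3bis} with $\psi \equiv 1$, yielding the $r = 1$ case of \eqref{stb1}. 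The $r = \infty$ case follows at once from \eqref{wp101} and \eqref{wp300}:
\begin{equation*}
\|v^k(t) - u^k(t)\|_\infty \le \|u^k(t)\|_\infty + \|v^k(t)\|_\infty \le C(M_0)\,t^{-N\eta}.
\end{equation*}

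For intermediate $r \in (1,\infty)$ I would apply the standard interpolation $\|w\|_r \le \|w\|_1^{1/r}\|w\|_\infty^{1-1/r}$ to the preceding two bounds and use the elementary inequality $(1+X)^{1/r} \le 1 + X^{1/r}$ to recover the form of \eqref{stb1} with a constant independent of $k$; a direct check confirms that both exponents in the resulting estimate agree with those in the statement. Finally, the passage $k \to \infty$ uses the locally uniform convergence $u^k \to u$ already obtained in the existence part of Theorem~\ref{th.uniqfund}; the uniqueness hypothesis on $v$, combined with standard compactness for the singular $p$-Laplacian, forces $v^k \to v$ locally uniformly in $Q_\infty$, and Fatou's lemma transports both \eqref{stb0} and \eqref{stb1} to the limit.

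The part I expect to require the most care is the rigorous justification of the $L^1$ dissipation identity for the smooth approximations, since the natural test function $\psi \equiv 1$ is not admissible in \eqref{def.weak}. The truncation $\zeta_R$ must be used and one has to show that the flux $\int |\nabla u^k|^{p-2} \nabla u^k \cdot \nabla \zeta_R$ vanishes as $R \to \infty$, for which the pointwise gradient bound \eqref{wp102} combined with the integral tail estimate of Lemma~\ref{le.wp4} should suffice. Once this identity is in place, the remainder of the argument reduces to exponent bookkeeping that parallels computations already carried out in Section~\ref{sec2}.
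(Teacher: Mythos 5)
Your approach has a genuine gap that defeats the purpose of the lemma. You prove the estimates for the particular solution $u$ obtained as the limit of the smooth approximations $u^k$ from the existence part of Theorem~\ref{th.uniqfund}. But Lemma~\ref{le.stb} is invoked in the uniqueness part of that theorem applied to two \emph{arbitrary} solutions $u_1$, $u_2$ with initial trace $u_0$; there the conclusion $\|u_i(s)-v(s)\|_{r+1}\to 0$ as $s\to 0$ must hold for both, not just for the constructed one. If you only know it for the approximation limit, you cannot close the uniqueness argument — you would be assuming precisely what you are trying to prove. The paper sidesteps this by never appealing to how $u$ was built: for $\tau>0$ it introduces $v^\tau$, the solution of the pure diffusion equation on $(\tau,\infty)\times\real^N$ with data $v^\tau(\tau)=u(\tau)$ (legitimate since $u(\tau)\in L^1\cap W^{1,\infty}$), then uses $L^1$-accretivity of $-\Delta_p$ to get $\|u(t)-v^\tau(t)\|_1\le\int_\tau^t\int|\nabla u|^q$, bounds the right-hand side exactly as you propose via \eqref{wp100}--\eqref{wp101}, and finally lets $\tau\to 0$ (using the uniqueness hypothesis for $v$ to pass $v^\tau(t)\to v(t)$ in $L^1$). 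The comparison principle gives \eqref{stb0} by the same device: $\|u(t)\|_1\le\|v^\tau(t)\|_1\le\|v^\tau(\tau)\|_1=\int u(\tau)\,dx\to M_0$.

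Two further remarks. First, a small but real slip: the cutoff $\zeta_R$ defined in \eqref{trunc} satisfies $\zeta_R\equiv 0$ on $B_{R/2}(0)$ and $\zeta_R\equiv 1$ outside $B_R(0)$, so $\zeta_R\to 0$ pointwise as $R\to\infty$; to approximate $\psi\equiv 1$ you would need a cutoff of the opposite type, e.g.\ the $\xi_R$ introduced in the proof of Lemma~\ref{lemma.fs2}. Second, your replacement of $L^1$-accretivity by the combination of the mass-dissipation identity for $u^k$, mass conservation for $v^k$, and the sign comparison $u^k\le v^k$ is a clean alternative at the smooth level and gives the same inequality $\|v^k(t)-u^k(t)\|_1\le\int_0^t\int|\nabla u^k|^q$; the interpolation step and the exponent bookkeeping that follow are correct and coincide with what the paper does. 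The problem is only that this package cannot be transported from $u^k$ to an arbitrary $u$ without already knowing uniqueness.
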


\begin{proof} For $\tau>0$, let $v^\tau$ be the solution to the diffusion equation \eqref{wp103} in $(\tau,\infty)\times\real^N$ with initial condition $v^\tau(\tau) =u(\tau)$.

We first prove \eqref{stb0}. By the comparison principle, $u\le v^\tau$ in $(\tau,\infty)\times\real^N$ while the $L^1$-accretivity of the $p$-Laplacian guarantees that $\|v^\tau(t)\|_1\le \|v^\tau(\tau)\|_1$ for $t>\tau$. Consequently, for $t>\tau$,
$$
\|u(t)\|_1 \le \|v^\tau(t)\|_1 \le \|v^\tau(\tau)\|_1=\int_{\real^N} u(\tau,x)\, dx \mathop{\longrightarrow}_{\tau\to 0} M_0\,,
$$
and thus \eqref{stb0}.

Next, since $u(\tau)\in L^1(\real^N)$ and $p>p_c$, it follows from the $L^1$-accretivity of the $p$-Laplacian that, for $t>\tau$,
\begin{equation*}
\|u(t)-v^\tau(t)\|_1 \leq \int_{\tau}^{t} \int_{\real^N} |\nabla
u(s,x)|^q \,dx\,ds.
\end{equation*}
Thanks to \eqref{stb0}, we may use \eqref{wp101} and  \eqref{wp100} to obtain
\begin{align*}
\|u(t)-v^\tau(t)\|_1 \leq & C\ \int_\tau^t \int_{\real^N} \left[ \left\| u\left( \frac{s+\tau}{2} \right) \right\|_\infty^{1/\alpha p} + (s-\tau)^{-1/p} \right]^q\ \left( u(s,x) \right)^{2q/p}\, ds \\
\le & C(M_0)\ \int_\tau^t \left[ (s-\tau)^{-qN\eta/\alpha p} + (s-\tau)^{-q/p} \right]\ \|u(s)\|_\infty^{(2q-p)/p}\ \|u(s)\|_1\, ds \\
\le & C(M_0)\ \int_\tau^t \left[ (s-\tau)^{-qN\eta/\alpha p} + (s-\tau)^{-q/p} \right]\ s^{-(2q-p)N\eta/p}\, ds \\
\le & C(M_0)\ \int_\tau^t \left[ (s-\tau)^{-N\eta/\alpha} + (s-\tau)^{-(q(N+1)-N)\eta} \right]\, ds \\
\le & C(M_0) \left[ t^{(N+1)(q_*-q)p\eta/(p-q)} + t^{(N+1)(q_*-q)\eta} \right]\,,
\end{align*}
hence
\begin{equation*}
\|u(t)-v^\tau(t)\|_1 \leq C(M_0) \left[ 1 + t^{(N+1)(q_*-q)q\eta/(p-q)} \right] t^{(N+1)(q_*-q)\eta} \,, \qquad t>\tau\,.
\end{equation*}
Now, since $v^\tau(t)$ converges towards $v(t)$ in $L^1(\real^N)$
for all $t>0$, we conclude that
\begin{equation}
\|u(t)-v(t)\|_1 \leq C(M_0) \left[ 1 + t^{(N+1)(q_*-q)q\eta/(p-q)} \right] t^{(N+1)(q_*-q)\eta}\,, \qquad t>0 \,. \label{intermL1}
\end{equation}
Also, by \eqref{wp101}, \eqref{wp300}, and \eqref{stb0},
\begin{equation}\label{intermLinf}
\|u(t) - v(t)\|_{\infty} \leq \|u(t)\|_{\infty} + \|v(t)\|_{\infty} \leq C(M_0)\ t^{-N\eta}\,, \quad t>0.
\end{equation}
We then infer from \eqref{intermL1}, \eqref{intermLinf}, and
H\"older's inequality that \eqref{stb1} is true.
\end{proof}

\begin{proof}[Proof of Theorem~\ref{th.uniqfund}. Uniqueness]
Let $u_1$ and $u_2$ be two non-negative solutions to \eqref{eq1} with initial condition $u_0\in\cm^{+}_{b}(\real^N)$ and define
$$
M_0 := \int_{\real^N} du_0(x)\,,
$$
and $w:=u_1-u_2$. Then, $w$ solves
\begin{equation}
\partial_{t}w-(\Delta_{p}u_1-\Delta_{p}u_2)+|\nabla u_1|^q-|\nabla
u_2|^q=0 \quad \hbox{in} \ Q_\infty.
\end{equation}
Consider $r>0$ to be specified later and $T>0$. For $t\in (0,T)$, we calculate
\begin{equation*}
\begin{split}
\frac{1}{r+1}\ \frac{d}{dt} \|w\|_{r+1}^{r+1}
&=-\int_{\real^N}r|w|^{r-1}\nabla w\cdot\left(|\nabla
u_1|^{p-2}\nabla u_1-|\nabla u_2|^{p-2}\nabla u_2\right)\,dx\\
&-\int_{\real^N}|w|^{r-1}w\left(|\nabla u_1|^q-|\nabla
u_2|^q\right)\,dx.
\end{split}
\end{equation*}
Lemma~\ref{lemma.vecineq} then gives, with the help of Young's
inequality,
\begin{align*}
& \frac{1}{r+1}\ \frac{d}{dt} \|w\|_{r+1}^{r+1} \\
\leq & -r\vartheta \int_{\real^N}|w|^{r-1} \frac{(|\nabla
u_1|^q-|\nabla u_2|^q)^2}{1+|\nabla u_1|^{2q-p}+|\nabla
u_2|^{2q-p}}\,dx\\
& + \int_{\real^N}|w|^{(r+1)/2}\frac{|w|^{(r-1)/2}(|\nabla
u_1|^q-|\nabla u_2|^q)}{\sqrt{1+|\nabla u_1|^{2q-p}+|\nabla
u_2|^{2q-p}}} \sqrt{1+|\nabla u_1|^{2q-p}+|\nabla u_2|^{2q-p}}\,dx\\
\leq & C\ \int_{\real^N}|w|^{r+1} \left(1+ |\nabla u_1|^{2q-p}+|\nabla u_2|^{2q-p}\right)\,dx\\
\leq & C\left(1+\|\nabla u_1\|_{\infty}^{2q-p}+\|\nabla
u_2\|_{\infty}^{2q-p}\right)\|w\|_{r+1}^{r+1}.
\end{align*}
Owing to \eqref{stb0}, we are in a position to use the gradient estimate \eqref{wp102} and we further obtain
\begin{equation*}
\frac{1}{r+1}\ \frac{d}{dt} \|w(t)\|_{r+1}^{r+1}\leq
C(M_0,T) \left( 1 + t^{-(N+1)\eta(2q-p)} \right) \|w(t)\|_{r+1}^{r+1}.
\end{equation*}
Observing that
$$
1-(N+1)\eta(2q-p)=2(N+1)(q_*-q)\eta>0,
$$
we may  integrate the above differential inequality over $(s,t)$, $0<s<t<T$, to obtain
\begin{equation}\label{interm5}
\|w(t)\|_{r+1}^{r+1} \leq \|w(s)\|_{r+1}^{r+1}\ \exp\left\{(r+1) C(M_0,T) \left( t^{2(N+1)(q_*-q)\eta} + t \right)\right\}.
\end{equation}
We now choose $r\in(0,(N+1)(q_*-q)/N)$ and realize that
\eqref{stb1} guarantees that (keeping the notation of Lemma~\ref{le.stb})
\begin{align*}
\|w(s)\|_{r+1}^{r+1} \le & \|u_1(s)-v(s)\|_{r+1}^{r+1} + \|v(s)-u_2(s)\|_{r+1}^{r+1} \\
\leq & C(M_0,T)\ s^{((N+1)(q_*-q)-Nr)\eta}  \mathop{\longrightarrow}_{s\to 0} 0.
\end{align*}
Consequently, letting $s\to 0$ in  \eqref{interm5} leads us to $\|w(t)\|_{r+1}^{r+1} \leq 0$ for all $t\in (0,T)$, hence $u_1\equiv u_2$ in $(0,T)$. As $T$ was arbitrary, the proof is complete.
\end{proof}

Since initial data of the form $M\delta_0$, where $\delta_0$ denotes the Dirac mass at $x=0$, play an essential role in the sequel, we rephrase Theorem~\ref{th.uniqfund} in this particular setting.

\begin{corollary}\label{cor.fund}
For any $M>0$, there exists a unique solution $u_{M}$ to \eqref{eq1}
with initial condition $M\delta_0$. In the sequel, $u_M$ will be
refered to as \emph{the fundamental solution to \eqref{eq1} of mass
$M$}. Moreover, it satisfies the estimates \eqref{wp101} and
\eqref{wp102} with $C_s(M)$ instead of $C_s(M_0)$.
\end{corollary}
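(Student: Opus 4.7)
The plan is to view Corollary~\ref{cor.fund} as the specialization of Theorem~\ref{th.uniqfund} to the single initial datum $u_0 = M\delta_0 \in \cm_{b}^{+}(\real^N)$, which has total mass $M_0 = M$. Once this is granted, existence of a unique viscosity solution $u_M \in C(Q_\infty)$ to \eqref{eq1} with the stated initial trace, together with the bounds \eqref{wp101} and \eqref{wp102} with $C_s(M)$ in place of $C_s(M_0)$, all follow verbatim from Theorem~\ref{th.uniqfund}.

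Consequently, the only nontrivial task is to check the hypothesis of Theorem~\ref{th.uniqfund} for this specific $u_0$, namely that the pure singular diffusion problem
\begin{equation*}
\partial_t v - \Delta_p v = 0 \quad \text{in } Q_\infty, \qquad v(0) = M\delta_0 \quad \text{in } \real^N,
\end{equation*}
admits a unique solution $v \in C([0,\infty); \cm_{b}^{+}(\real^N)) \cap C(Q_\infty)$. This is precisely the classical question of existence and uniqueness of the Barenblatt source-type solution for the singular $p$-Laplacian in the supercritical range $p \in (p_c,2)$. In this range, the explicit self-similar fundamental solution $B_M(t,x) = t^{-N\eta} F_M(|x| t^{-\eta})$ is known to exist with a compactly supported Barenblatt profile $F_M$, to belong to $C(Q_\infty)$, and to attain $M\delta_0$ as its initial trace in the required weak-$\ast$ sense; the existence part I would take from the classical work of DiBenedetto and Herrero on the singular $p$-Laplacian, see \cite{DBH90} and the references therein.

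The main (and essentially only) obstacle is securing the matching uniqueness of this Barenblatt solution in the class $C([0,\infty); \cm_{b}^{+}(\real^N)) \cap C(Q_\infty)$. For $p > p_c$ this is known to hold and can be extracted from the general uniqueness theory for the singular $p$-Laplacian with non-negative measure-valued initial data; the case of a Dirac mass is the simplest instance thanks to its radial symmetry and the a priori decay estimates \eqref{wp300}, which readily identify the initial trace as a bounded Radon measure. With this input in hand, Theorem~\ref{th.uniqfund} applies directly and produces the fundamental solution $u_M$ together with its quantitative bounds, which concludes the proof.
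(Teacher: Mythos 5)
Your overall strategy is exactly the paper's: Corollary~\ref{cor.fund} is obtained by verifying the hypothesis of Theorem~\ref{th.uniqfund} for the specific datum $u_0 = M\delta_0$ and then invoking that theorem. The paper does this in one line by citing \cite[Theorem~4.1]{CQW07}, which establishes both existence and uniqueness of the source-type (Barenblatt) solution of $\partial_t v - \Delta_p v = 0$ with $v(0)=M\delta_0$ in the supercritical range $p_c<p<2$; your appeal to \cite{DBH90} plus ``general uniqueness theory'' is pointing at the same classical result, just less precisely. Two small corrections are in order. First, your description of the Barenblatt profile $F_M$ as \emph{compactly supported} is wrong in this fast-diffusion regime: for $p_c<p<2$ the self-similar profile is strictly positive on all of $\real^N$ and decays like $|\xi|^{-p/(2-p)}$ as $|\xi|\to\infty$ (compact support only occurs for $p>2$). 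This does not affect the argument, since compactness of support is never used, but the statement as written is false. Second, it is worth being careful about the uniqueness assertion: the paper explicitly points out just before Theorem~\ref{th.uniqfund} that uniqueness for the pure $p$-Laplacian Cauchy problem with a general nonnegative bounded measure as datum is open, and it is precisely the Dirac-mass case that is covered by the cited result. So ``can be extracted from the general uniqueness theory'' slightly overstates what is available; one should cite the Dirac-mass result directly (as the paper does) rather than a general theory that does not exist.
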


\begin{proof}
The existence and uniqueness of a solution for the $p$-Laplacian
equation \eqref{wp103} with initial condition $u_0=M\delta_0$ are proved in \cite[Theorem 4.1]{CQW07}. Thus, applying Theorem~\ref{th.uniqfund} with $u_0=M\delta_0$, we get the claimed result.
\end{proof}


\section{Very singular solutions}\label{sec4}

As specified in the Introduction, we will study in detail the very
singular solutions of \eqref{eq1}. More precisely, in this section
we show that there exists in fact a unique very singular solution to \eqref{eq1}. This is done by constructing a minimal and a maximal very singular solution and identifying them afterwards. We begin with the precise definition.

\begin{definition}\label{def.VSS}
A very singular solution to \eqref{eq1} is a viscosity solution $u$ to \eqref{eq1} in $Q_\infty$ in the sense of Definition~\ref{def.visc} satisfying
\begin{equation}
u(t)\in L^1(\real^N)\cap W^{1,\infty}(\real^N) \label{regvss}
\end{equation}
for all $t>0$ as well as
\begin{equation}\label{VSS1}
\lim\limits_{s\to 0}\int_{\{|x|\leq r\}}u(s,x)\,dx = \infty, \quad
r\in(0,\infty),
\end{equation}
and
\begin{equation}\label{VSS2}
\lim\limits_{s\to 0}\int_{\{|x|\geq r\}}u(s,x)\,dx=0, \quad
r\in(0,\infty).
\end{equation}
A very singular subsolution (resp. supersolution) to \eqref{eq1} is
a viscosity subsolution (resp. supersolution) to \eqref{eq1} in $Q_\infty$ in the sense of
Definition~\ref{def.visc}, which satisfies \eqref{regvss},
\eqref{VSS1} and \eqref{VSS2}.
\end{definition}

We already know that the class of very singular solutions to
\eqref{eq1} for $p\in(p_c,2)$ and $q\in(p/2,q_*)$ is non-empty as a
consequence of the following result \cite{IL2}.

\begin{theorem}\label{th.VSSunique}
There exists a unique radially symmetric, self-similar very singular solution $U$ to \eqref{eq1}, having the form
\begin{equation}\label{selfVSS}
U(t,x)=t^{-\alpha}f_U(|x|t^{-\beta}), \quad
(t,x)\in Q_\infty.
\end{equation}
The profile $f_U$ is a solution to the differential equation
\begin{equation}\label{ODE1}
(|f_U'|^{p-2}f_U')'(r)+\frac{N-1}{r}(|f_U'|^{p-2}f_U')(r)+\a f_U(r)+\b
rf_U'(r)-|f_U'(r)|^q=0\,, \quad r> 0,
\end{equation}
satisfying $f_U'(0)=0$ and there is an explicit positive constant
$\omega^*$ such that
$$
\lim\limits_{r\to\infty} r^{p/(2-p)}\ f_U(r) = \omega^*.
$$
\end{theorem}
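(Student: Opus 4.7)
The strategy is to substitute the self-similar ansatz $U(t,x)=t^{-\alpha}f(|x|t^{-\beta})$ into \eqref{eq1}, thereby reducing the problem to the study of the ODE \eqref{ODE1} on $(0,\infty)$, and then to apply a shooting argument to select the unique admissible profile. The exponents $\alpha,\beta$ in \eqref{expas} are exactly those which make $\partial_t U$, $\Delta_p U$, and $|\nabla U|^q$ rescale consistently under the similarity group. Radial symmetry of $U$ together with the regularity of $U(t,\cdot)$ at $x=0$ impose the initial data $f(0)=a>0$, $f'(0)=0$, and a fixed-point argument accommodating the degeneracy of the $p$-Laplace operator at $f'=0$ produces, for each such $a>0$, a unique local profile $f_a$ solving \eqref{ODE1}.

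The construction of the VSS then amounts to classifying the global behaviour of $f_a$ as $a$ varies, in the spirit of \cite{BPT86,PW88,KV92} for related equations with absorption. I would aim to establish a trichotomy, separating the values of $a$ for which $f_a$ eventually vanishes (compactly supported solutions), those for which $f_a$ stays positive with the ``generic'' slow tail $f_a(r)\sim c_a r^{-\alpha/\beta}$, and a unique threshold value $a^{*}$ at which $f_{a^{*}}$ stays positive with the faster decay $f_{a^{*}}(r)\sim\omega^{*} r^{-p/(2-p)}$. The form of the critical tail should be forced by a local asymptotic analysis of \eqref{ODE1} at $r=\infty$, in which the dominant balance is between $(|f'|^{p-2}f')'$ and $|f'|^q$, and the same analysis identifies $\omega^{*}$ explicitly in terms of $N$ and $p$. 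Setting $f_U:=f_{a^{*}}$, the function $U$ defined by \eqref{selfVSS} is then checked to satisfy \eqref{regvss}--\eqref{VSS2}: the condition \eqref{VSS2} follows from $\alpha-N\beta>0$ combined with $f_U\in L^1(\real^N)$, which is ensured by $p/(2-p)>N$, equivalent to $p>p_c$; \eqref{VSS1} reflects the positive power $t^{-\alpha+N\beta}$ appearing in the total mass after the self-similar change of variables.

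Uniqueness within the radial self-similar class can be handled by an intersection-comparison/sliding argument on \eqref{ODE1}: two candidate profiles with the prescribed leading asymptotics $\omega^{*}r^{-p/(2-p)}$ at infinity and the boundary condition $f'(0)=0$ have a controlled number of intersections, and the matching of the leading constant $\omega^{*}$ forces them to coincide. An alternative, sometimes more geometric route is to rewrite \eqref{ODE1} as a planar autonomous dynamical system via a logarithmic change of variable in $r$ and identify $f_U$ with the unique heteroclinic orbit joining the equilibrium corresponding to $f'(0)=0$ at $r=0$ to the equilibrium corresponding to the critical tail at $r=\infty$.

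The main obstacle, as usual for such shooting constructions, lies in the rigorous justification of the trichotomy, in particular in proving that the tail rate of $f_a$ depends continuously and sufficiently monotonically on $a$ to produce a single separating value $a^{*}$, and in controlling $f_a$ near its first zero where the $p$-Laplace operator degenerates. The asymptotic analysis at $a=a^{*}$ is equally delicate: since $p/(2-p)$ is a borderline exponent at which several terms of \eqref{ODE1} become of the same order at infinity, upgrading mere boundedness of $r^{p/(2-p)}f_{a^{*}}(r)$ to genuine convergence to $\omega^{*}$ requires a careful linearisation of \eqref{ODE1} around the critical power profile, and this is where most of the technical work is expected to concentrate.
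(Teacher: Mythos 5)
You should first note that this paper does not actually prove Theorem~\ref{th.VSSunique}: it is recalled verbatim from \cite{IL2}, so a direct comparison with ``the paper's own proof'' is moot. From the way $g_a$, $a_*$ and $R(a)$ are invoked in Step~3 of the proof of Theorem~\ref{th.asympt}, your overall strategy (shooting from $r=0$ with $f(0)=a$, $f'(0)=0$; trichotomy in $a$ between profiles crossing zero, profiles with the generic slow tail $r^{-\alpha/\beta}$, and a threshold $a^*$ giving the fast tail $r^{-p/(2-p)}$; uniqueness via the threshold) does match the structure of \cite{IL2}. So the skeleton of your proposal is the right one.

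There is, however, a concrete error in the asymptotic analysis at $r=\infty$, and it is not a cosmetic one since this is exactly where $\omega^*$ is supposed to come from. For $f(r)\sim\omega\,r^{-p/(2-p)}$ one has $f'(r)\sim-\omega\,\frac{p}{2-p}\,r^{-2/(2-p)}$, so that $(|f'|^{p-2}f')'$, $\frac{N-1}{r}|f'|^{p-2}f'$, $\alpha f$ and $\beta r f'$ are all of order $r^{-p/(2-p)}$, whereas $|f'|^q$ is of order $r^{-2q/(2-p)}$. Since $q>p/2$ in the range \eqref{rexp}, $2q/(2-p)>p/(2-p)$, and the gradient absorption term is therefore \emph{subdominant} on the critical tail. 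The leading balance that fixes the rate $p/(2-p)$ and the constant $\omega^*$ is between the $p$-Laplacian terms and the scaling/drift terms $\alpha f+\beta r f'$, not between $(|f'|^{p-2}f')'$ and $|f'|^q$ as you write. Carrying out this balance gives
$$
\big(\omega^*\big)^{p-2}=\Big(\tfrac{2-p}{p}\Big)^{p-1}\frac{1}{(N+1)(p-p_c)},
$$
which is indeed a function of $N$ and $p$ only, as you anticipated; but you reach the right conclusion about $\omega^*$ for the wrong reason. It is rather the \emph{generic} slow tail $r^{-\alpha/\beta}$ for which the scaling terms cancel identically and the leading balance pits diffusion against absorption (this is the balance underlying the friendly giant $\Gamma_{p,q}$).

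A second gap concerns the verification of \eqref{VSS2}. You claim that \eqref{VSS2} follows from $\alpha-N\beta>0$ combined with $f_U\in L^1(\real^N)$, the latter coming from $p/(2-p)>N$, i.e.\ $p>p_c$. This is not sufficient. Plugging $U(t,x)=t^{-\alpha}f(|x|t^{-\beta})$ into $\int_{\{|x|\ge r\}}U(t,x)\,dx$ and using $f(\rho)\sim C\rho^{-\lambda}$ gives a quantity of order $r^{N-\lambda}\,t^{\beta\lambda-\alpha}$. If $\lambda=\alpha/\beta$ the power of $t$ vanishes: the slow-tail self-similar profiles are also in $L^1(\real^N)$ (indeed $\alpha/\beta>N$ by \eqref{expas2}), yet they do \emph{not} satisfy \eqref{VSS2} because they leave a nontrivial trace at infinity as $t\to0$. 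What forces \eqref{VSS2} is precisely the strict inequality $p/(2-p)>\alpha/\beta$, which is equivalent to $q>p/2$ and not to $p>p_c$. The mere integrability of $f_U$ is necessary but far from sufficient, and your argument as stated would ``prove'' that the non-threshold positive profiles are also very singular solutions, which they are not.

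Two minor remarks. First, for $p<2$ the $p$-Laplacian is \emph{singular} (not degenerate) at $\nabla u=0$: the coefficient $|\nabla u|^{p-2}$ blows up there, which is what requires the integrated fixed-point formulation at $r=0$. Second, for $a$ below the threshold the profile $g_a$ crosses zero with $g_a'(R(a))<0$ (as used in Step~3 of Section~\ref{sec5}); it does not furnish a compactly supported solution. Describing that branch as ``compactly supported solutions'' is inaccurate, and what one actually obtains from such $g_a$'s are truncated subsolutions, not solutions.
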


This important result is very useful in the sequel in order to
identify very singular solutions when we are able to show that they
are radially symmetric and in self-similar form.

\subsection{Some properties of very singular subsolutions and solutions}

From Definition~\ref{def.VSS}, one expects the initial trace of
a very singular solution to \eqref{eq1} to vanish outside the
origin. This is made rigorous in the next result.

\begin{proposition}\label{prop.zero}
Let $u$ be a very singular subsolution to \eqref{eq1} and $K$ be a compact subset of $\real^N\setminus\{0\}$. Then
$$
\lim\limits_{t\to 0} \sup_{x\in K}\{u(t,x)\} = 0.
$$
\end{proposition}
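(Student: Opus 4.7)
My plan is to first establish a uniform integral tail estimate for $u$, and then to convert it into a pointwise bound on $\sup_K u(t,\cdot)$ via a ball-volume argument that exploits the pointwise gradient estimate \eqref{wp100}.

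\textbf{Step 1: tail estimate.} Adapting the proof of Lemma~\ref{le.wp4}, I would test the weak sub-inequality satisfied by $u$ against $\zeta_R^{q/(q-p+1)}$, where $\zeta_R(x)=\zeta(x/R)$ for $\zeta$ as in \eqref{trunc}. Since $u$ is a viscosity subsolution with $u(t)\in W^{1,\infty}(\real^N)$ for every $t>0$, the same computation as in the derivation of \eqref{fantasio}, relying on Young's inequality, gives
\[ \frac{d}{dt}\int_{\real^N}\zeta_R(x)^{q/(q-p+1)}\,u(t,x)\,dx \leq C\,R^{(\beta N-\alpha-1)/\beta}. \]
Integrating from $s$ to $t$ and sending $s\to 0$, the contribution at $s$ vanishes thanks to \eqref{VSS2} because $\supp\zeta_R\subset\{|x|\geq R/2\}$, leaving
\begin{equation*}
\int_{\{|x|\geq R\}} u(t,x)\,dx \leq C\,t\,R^{(\beta N-\alpha-1)/\beta}\qquad\text{for all }R,t>0.
\end{equation*}

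\textbf{Step 2: pointwise decay.} Fix $r>0$ such that $K\subset\{|x|\geq r\}$ and pick $x_0\in K$. For $\tau\in(0,t)$, I would let $v_\tau$ denote the unique viscosity solution to \eqref{eq1} with initial datum $u(\tau)\in L^1\cap W^{1,\infty}(\real^N)$ provided by Proposition~\ref{pr.wp1}, so that $u(t,\cdot)\leq v_\tau(t-\tau,\cdot)$ by the comparison principle. The pointwise gradient estimate \eqref{wp100} applied to $v_\tau$ controls its Lipschitz constant at each point in terms of $v_\tau^{2/p}$ at the same point; in particular, writing $M:=u(t,x_0)$, the function $v_\tau(t-\tau,\cdot)$ stays above $M/2$ on a ball $B_\delta(x_0)$ with $\delta$ of explicit quantitative order in $M,\tau,t$. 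Choosing $\tau$ of the order $t/2$ and $\delta$ small enough so that $B_\delta(x_0)\subset\{|y|\geq r/2\}$, the tail estimate from Step~1 applied to $v_\tau$ yields an upper bound on its mass in $B_\delta(x_0)$, leading to
\[ M \leq \varepsilon(t,r)\qquad\text{with}\qquad \lim_{t\to 0}\varepsilon(t,r)=0, \]
uniformly in $x_0\in K$, which is the desired statement.

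\textbf{Main obstacle.} The hard part will be to keep the gradient bound sharp in the regime where both $\|u(\tau)\|_1$ and $\|u(\tau)\|_\infty$ diverge as $\tau\to 0$. Using the global Lipschitz estimate \eqref{wp102} naively produces a constant that blows up too fast to close the ball argument in dimensions $N\geq 2$; it will be essential to exploit instead the pointwise form \eqref{wp100}, which couples $|\nabla v_\tau|$ to the local value of $v_\tau^{2/p}$, because on $\{|x|\geq r/2\}$, where the tail estimate from Step~1 forces $v_\tau$ to be small in $L^1$, the effective Lipschitz constant becomes small too, and the ball argument closes uniformly on $K$.
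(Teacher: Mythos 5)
Your Step~1 is fine: the tail estimate follows from the computation in Lemma~\ref{le.wp4}, and \eqref{VSS2} kills the contribution at $s\to 0$. It is Step~2 that has a genuine gap. The pointwise gradient estimate \eqref{wp100} does couple $|\nabla v_\tau|$ to the \emph{local} value $v_\tau^{2/p}$, but the prefactor $C\bigl(\|v_\tau(s)\|_\infty^{1/\alpha p} + (t-\tau-s)^{-1/p}\bigr)$ is \emph{global}: it involves the full $L^\infty$-norm at an earlier time. In your ball argument the admissible radius is $\delta \sim M^{-(2-p)/p}/L$ with $L$ this global prefactor, so the lower bound on the $L^1$-mass in $B_\delta(x_0)$ degenerates as $L\to\infty$. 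And you cannot control $L$ at this stage: bounding $\|v_\tau(s)\|_\infty$ via \eqref{wp101} brings in $\|u(\tau)\|_1$, which for a very singular subsolution diverges as $\tau\to 0$ by \eqref{VSS1}, at an a priori arbitrary rate. The tools that would tame it --- the friendly-giant bound of Proposition~\ref{prop.compFG} and the decay estimates of Proposition~\ref{prop.decayVSS} --- are logically downstream of Proposition~\ref{prop.zero}, so invoking them here would be circular. Your closing claim that ``the effective Lipschitz constant becomes small too'' on $\{|x|\geq r/2\}$ is not supported by \eqref{wp100}: only the factor $v_\tau^{2/p}$ is local, and it is precisely the remaining global prefactor that blows up.

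The paper avoids this by never using the gradient estimate. Since the absorption term has the good sign, $u$ is a subsolution to the pure diffusion $\partial_t v=\Delta_p v$; comparing with the $p$-Laplacian flow started from $u(\tau)$ and invoking the DiBenedetto--Herrero interior $L^\infty$-estimate \eqref{DiBH} gives
$$
\sup_{x\in B_R(x_0)}\{u(t,x)\} \le C(t-\tau)^{-N\eta}\Bigl(\int_{B_{2R}(x_0)}u(\tau,x)\,dx\Bigr)^{p\eta} + C\Bigl(\frac{t-\tau}{R^p}\Bigr)^{1/(2-p)}\,.
$$
This estimate is local-to-local: it only sees the $L^1$-mass on $B_{2R}(x_0)$, not any global norm. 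For $|x_0|>2R$ that local mass vanishes as $\tau\to 0$ by \eqref{VSS2}, leaving $\sup_{B_R(x_0)}u(t,\cdot)\le C(t/R^p)^{1/(2-p)}$, which yields the statement after letting $t\to 0$ and covering $K$. To salvage your strategy you would need a genuinely local analogue of \eqref{wp100} with a constant depending only on local data near $x_0$; absent such an estimate, the DiBenedetto--Herrero route is the clean fix.
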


\begin{proof}
Fix $\tau>0$ and let $v_{\tau}$ be the solution to the diffusion equation \eqref{wp103} in $(\tau,\infty)\times\real^N$ with initial condition $v_\tau(\tau)=u(\tau)$. According to \cite[Theorem III.6.2]{DBH90}, $v_\tau$ satisfies the following pointwise estimate: there exists a constant $C>0$ depending only on $N$ and $p$ such that, for any $x_0\in\real^N$, $R>0$,
and $t>\tau$,
\begin{equation}\label{DiBH}
\sup_{x\in B_R(x_0)}\{v_{\tau}(t,x)\} \leq C\ (t-\tau)^{-N\eta} \left(\int_{B_{2R}(x_0)} v_{\tau}(\tau,x)\,dx\right)^{p\eta} + C \left(\frac{t-\tau}{R^p}\right)^{1/(2-p)}.
\end{equation}
Since $u$ is a subsolution to the diffusion equation \eqref{wp103} in $(\tau,\infty)\times\real^N$ with $u(\tau)=v_\tau(\tau)$, the comparison principle gives $u\le v_\tau$ in $(\tau,\infty)\times\real^N$. Plugging these information in \eqref{DiBH}, we are led to
\begin{equation}\label{DiBH2}
\sup_{x\in B_R(x_0)}\{u(t,x)\} \leq C \ (t-\tau)^{-N\eta} \left(\int_{B_{2R}(x_0)} u(\tau,x) \,dx \right)^{p\eta}+ C \left(\frac{t-\tau}{R^p}\right)^{1/(2-p)}
\end{equation}
for any $t>\tau>0$. Now, assume further that $x_0\ne 0$ and $|x_0|>2R$. Then $0\not\in B_{2R}(x_0)$ and we may let $\tau\to 0$ in \eqref{DiBH2} and use \eqref{VSS2} to obtain
\begin{equation}\label{DiBH3}
\sup_{x\in B_R(x_0)}\{u(t,x)\}\leq C \left(\frac{t}{R^p}\right)^{1/(2-p)}\,, \qquad t>0\,.
\end{equation}
Therefore, if $x_0\ne 0$ and $|x_0|>2R$,
$$
\lim\limits_{t\to 0}\ \sup_{x\in B_R(x_0)}\{u(t,x)\} = 0,
$$
and this property entails Proposition~\ref{prop.zero} by a covering argument.
\end{proof}

In particular, Proposition~\ref{prop.zero} implies that $u(0,x)=0$,
for any very singular subsolution $u$ and any $x\neq0$. This is
useful to prove some comparison results.

\begin{proposition}\label{prop.compFG}
Let $u$ be a very singular subsolution to \eqref{eq1}. Then
\begin{equation}\label{comp.VSS}
0\leq u(t,x)\leq\Gamma_{p,q}(|x|), \quad (t,x)\in Q_\infty.
\end{equation}
\end{proposition}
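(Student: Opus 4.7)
The plan is to apply Lemma~\ref{le.wp3} to $u$ restarted from a small time $\tau>0$ and then let $\tau\to 0$. Non-negativity will come essentially for free (the subsolution property together with Proposition~\ref{prop.zero} rules out the possibility of $u$ becoming negative in the interior), so the real task is the upper bound, and the central quantity is
$$
R_\tau := R(u(\tau)) = \inf\{R > 0 : u(\tau,x)|x|^{\a/\b}\le \gamma \ \text{a.e. in } \{|x|\ge R\}\}\,.
$$
Everything will reduce to showing that $R_\tau<\infty$ for each $\tau>0$ and that $R_\tau\to 0$ as $\tau\to 0$.

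Granting these two facts, the argument closes as follows. For $\tau>0$ we know from Definition~\ref{def.VSS} that $u(\tau)\in L^1(\real^N)\cap W^{1,\infty}(\real^N)$, so Proposition~\ref{pr.wp1} provides a unique viscosity solution $\tilde u_\tau$ of \eqref{eq1}-\eqref{inco} with initial datum $u(\tau)$. Viewing $u$ as a viscosity subsolution to \eqref{eq1} on $[\tau,\infty)\times\real^N$ that coincides with $\tilde u_\tau(\cdot-\tau)$ at $t=\tau$, the viscosity comparison principle recalled in Section~\ref{sec2vs} gives $u(t,x)\le \tilde u_\tau(t-\tau,x)$ for $t>\tau$, and Lemma~\ref{le.wp3} applied to $\tilde u_\tau$ (using $R_\tau<\infty$) yields
$$
u(t,x)\le \Gamma_{p,q}(|x|-R_\tau), \qquad t>\tau,\ |x|>R_\tau. \qquad (\star)
$$
Fixing $(t,x_0)$ with $t>0$ and $x_0\ne 0$, we choose $\tau\in(0,t)$ small enough that $R_\tau<|x_0|$, and let $\tau\to 0$ in $(\star)$; using the continuity of $\Gamma_{p,q}$ on $(0,\infty)$ together with $R_\tau\to 0$, we conclude $u(t,x_0)\le \Gamma_{p,q}(|x_0|)$. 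At $x_0=0$ the bound is trivial since $\Gamma_{p,q}(r)\to+\infty$ as $r\to 0^+$.

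The core difficulty is therefore showing that $R_\tau$ is finite and tends to zero with $\tau$. On any compact annulus $\{r_1\le|x|\le r_2\}$ with $0<r_1<r_2<\infty$, Proposition~\ref{prop.zero} directly gives $\sup_{r_1\le|x|\le r_2} u(\tau,x)|x|^{\a/\b}\to 0$; only the regime $|x|$ large, uniformly in small $\tau$, is problematic (note that $u(\tau)\in L^1\cap W^{1,\infty}$ alone does \emph{not} force $u(\tau,x)|x|^{\a/\b}$ to be bounded at infinity, since $\a/\b>N$). To gain this uniform control I would first derive a tail estimate analogous to Lemma~\ref{le.wp4} directly for $u$: using the cutoff $\zeta_R$ of \eqref{trunc} in the weak formulation of \eqref{eq1} (available as a distributional inequality thanks to $u(t)\in W^{1,\infty}$ for $t>0$) and the Young-inequality manipulation that produced \eqref{fantasio}, integrating between two times $0<\tau'<\tau$, and letting $\tau'\to 0$ to use \eqref{VSS2}, one obtains
$$
\int_{|x|\ge R} u(\tau,x)\,dx \le C\tau\, R^{(\b N-\a-1)/\b}, \qquad R,\tau>0.
$$
This integral decay, combined with the pointwise gradient-to-value estimate \eqref{wp100} (which makes the \emph{local} Lipschitz constant of $u(\tau)$ small precisely where $u(\tau)$ itself is small) through a Lipschitz/ball covering argument, converts into a pointwise decay of $u(\tau,x)$ at infinity, strong enough to beat the weight $|x|^{\a/\b}$. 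The hard part is precisely this interpolation: the time-dependent constants in \eqref{wp100} degenerate as $\tau\to 0$, and they must be balanced against the factor $\tau$ in the integral tail in just the right way to yield both $R_\tau<\infty$ and $R_\tau\to 0$. Once this technical step is in place, the steps above produce Proposition~\ref{prop.compFG}.
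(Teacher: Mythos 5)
There is a genuine gap, and it lies exactly where you flag it: you need $R_\tau:=R(u(\tau))<\infty$ and $R_\tau\to 0$, but the interpolation you sketch does not deliver this. The integral tail bound $\int_{\{|x|\ge R\}}u(\tau,x)\,dx\le C\,\tau\,R^{(\beta N-\alpha-1)/\beta}$ is indeed obtainable from \eqref{fantasio} and \eqref{VSS2}, but the Lipschitz/ball-covering argument, even when sharpened with the gradient bound \eqref{wp100} so that the local Lipschitz constant scales like $C_\tau u^{2/p}$, only converts it into $u(\tau,x)\lesssim\bigl(C_\tau^{N}\,\tau\,|x|^{(\beta N-\alpha-1)/\beta}\bigr)^{p\eta}$. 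The exponent of $|x|$ in this bound dominates $-\alpha/\beta$ if and only if $(p-N)(p-2q)\le 0$; since $q>p/2$ gives $p-2q<0$ while $p<2\le N$ whenever $N\ge 2$, the condition \emph{fails} for every $N\ge 2$, so the interpolated decay is strictly \emph{weaker} than $|x|^{-\alpha/\beta}$. In addition, the prefactor $\bigl(C_\tau^N\tau\bigr)^{p\eta}$ need not stay bounded: the constant supplied by \eqref{wp100} is at best of order $\tau^{-1/p}$ (and controlling $\|u(\tau/2)\|_\infty$ more precisely would require Proposition~\ref{prop.decayVSS}, which itself rests on the inequality \eqref{comp.VSS} being proved), so $(C_\tau^N\tau)^{p\eta}\sim\tau^{(p-N)\eta}$ blows up as $\tau\to 0$ when $N\ge 2$. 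As an aside, the pointwise bound \eqref{DiBH3} obtained in the proof of Proposition~\ref{prop.zero} already gives the much cleaner decay $u(\tau,x)\le C\,\tau^{1/(2-p)}|x|^{-p/(2-p)}$, and since $p/(2-p)>\alpha/\beta$ precisely because $q>p/2$, that alone would yield $R_\tau\to 0$ and could repair your plan; but this is not what your proposal uses.

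The paper avoids the whole issue by not restarting at a positive time. It compares $u$ directly with the barrier $S(t,x)=\Gamma_{p,q}(|x|-r)$ on the exterior cylinder $(0,\infty)\times D_r$, $D_r=\{|x|>r\}$, for arbitrary $r>0$. By Lemma~\ref{le.wp6}, $S$ is a supersolution there; the lateral boundary data are automatic since $S\equiv+\infty$ on $\{|x|=r\}$; and the condition at $t=0$ is supplied by Proposition~\ref{prop.zero}, which gives $u(t,\cdot)\to 0$ locally uniformly on $\real^N\setminus\{0\}$. Comparison then yields $u(t,x)\le\Gamma_{p,q}(|x|-r)$ on $(0,\infty)\times D_r$, and letting $r\to 0$ finishes the proof, with no need for any pointwise decay of $u(\tau)$ at infinity, nor any control of $R(u(\tau))$.
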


\begin{proof}
We adapt the proof of \cite[Lemma~3.4]{BKL04}. At a formal level,
the result follows from Lemma~\ref{le.wp3} since we can view a very singular solution as having an initial condition satisfiying $R(u_0)=0$. More precisely, let $r>0$ and define $D_{r}:=\{x\in\real^N: \ |x|>r\}$. By Lemma~\ref{le.wp6}, $S:(t,x)\longmapsto \Gamma_{p,q}(|x|-r)$ is a supersolution to \eqref{eq1} in $(0,\infty)\times D_r$ with $u(t,x)<\infty=S(t,x)$ if $(t,x)\in (0,\infty)\times \partial D_r$ and $u(0,x)=0\le S(x)$ for $x\in D_r$ by Proposition~\ref{prop.zero}. Since $u$ is a subsolution to \eqref{eq1} in $Q_\infty$ and thus also in $(0,\infty)\times D_r$, the comparison principle gives
$u(t,x)\leq\Gamma_{p,q}(|x|-r)$ for any $(t,x)\in(0,\infty)\times
D_r$. Fix now $x_0\in\real^N$, $x_0\ne 0$. Then $x_0\in D_r$ for any $r\in(0,|x_0|)$, hence $u(t,x_0)\leq\Gamma_{p,q}(|x_0|-r)$, for any $t>0$ and $r\in(0,|x_0|)$. The conclusion follows by letting $r\to 0$ in the previous inequality.
\end{proof}

We next prove that very singular subsolutions also enjoy the temporal decay estimates \eqref{wp3}.

\begin{proposition}\label{prop.decayVSS}
If $u$ is a very singular subsolution to \eqref{eq1} in $Q_\infty$, the following estimates hold:
\begin{equation}\label{decayVSS}
t^{\alpha-N\beta}\|u(t)\|_{1} + t^{\alpha}\|u(t)\|_{\infty} \leq K_\gamma, \quad t>0,
\end{equation}
where $\gamma$ and $K_\gamma$ are defined in \eqref{exp.FG} and Proposition~\ref{pr.wp2}, respectively. In addition, if $u$ is a very singular solution to \eqref{eq1} in $Q_\infty$,
\begin{equation}
t^{\alpha+\beta} \|\nabla u(t)\|_{\infty} \leq K_\gamma, \quad t>0. \label{gradestVSS}
\end{equation}
\end{proposition}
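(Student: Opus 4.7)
The plan is to bootstrap the \emph{friendly giant} bound of Proposition~\ref{prop.compFG} into the desired temporal decay via the estimates of Proposition~\ref{pr.wp2}. The key observation is that, for any $\tau>0$, the trace $u(\tau)$ is a legitimate initial datum: the regularity \eqref{regvss} gives $u(\tau)\in L^1(\real^N)\cap W^{1,\infty}(\real^N)$, and Proposition~\ref{prop.compFG} yields $u(\tau,x)\le \gamma|x|^{-\alpha/\beta}$, so \eqref{wp1} and \eqref{wp2} are fulfilled with $\kappa=\gamma$.

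So, fixing $\tau>0$, I would let $\tilde u_\tau$ denote the unique viscosity solution of \eqref{eq1} on $[\tau,\infty)\times\real^N$ with initial value $u(\tau)$, whose existence is guaranteed by Proposition~\ref{pr.wp1}. Since $u$ is a very singular subsolution, it is in particular a viscosity subsolution on this cylinder taking the value $u(\tau)$ at $t=\tau$, so the comparison principle for viscosity sub-/supersolutions yields $0\le u(t,x)\le\tilde u_\tau(t,x)$ for all $(t,x)\in[\tau,\infty)\times\real^N$. Applying Proposition~\ref{pr.wp2} to the time-shifted solution $(s,x)\mapsto\tilde u_\tau(s+\tau,x)$, whose initial datum still satisfies \eqref{wp2} with $\kappa=\gamma$, furnishes
\[
(t-\tau)^{\alpha-N\beta}\|\tilde u_\tau(t)\|_1 + (t-\tau)^{\alpha}\|\tilde u_\tau(t)\|_\infty + (t-\tau)^{\alpha+\beta}\|\nabla\tilde u_\tau(t)\|_\infty \le K_\gamma
\]
for $t>\tau$, with $K_\gamma$ depending only on $N$, $p$, $q$, $\gamma$ and thus independent of $\tau$. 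Combining the first two terms with the pointwise comparison $u\le\tilde u_\tau$ and then passing to the limit $\tau\to 0$ produces \eqref{decayVSS}.

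For the gradient estimate \eqref{gradestVSS}, a pointwise comparison is no longer sufficient. However, if $u$ is actually a very singular \emph{solution} (both sub- and supersolution), then $u$ and $\tilde u_\tau$ are both viscosity solutions of \eqref{eq1} on $[\tau,\infty)\times\real^N$ sharing the initial value $u(\tau)\in L^1(\real^N)\cap W^{1,\infty}(\real^N)$. The uniqueness statement in Proposition~\ref{pr.wp1} then forces $u\equiv\tilde u_\tau$ on that cylinder, so the gradient bound on $\tilde u_\tau$ transfers verbatim to $u$, and \eqref{gradestVSS} follows by sending $\tau\to 0$.

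The main, and essentially only, subtle point I anticipate is the uniformity of the constant $K_\gamma$ with respect to $\tau$: I would want to re-inspect the proof of Proposition~\ref{pr.wp2} to confirm that $K_\gamma$ depends solely on $N$, $p$, $q$ and the tail constant $\gamma$ of Proposition~\ref{prop.compFG}, and not on norms such as $\|u(\tau)\|_1$ or $\|u(\tau)\|_\infty$, which might blow up as $\tau\to 0$. This uniformity is precisely what renders the passage to the limit $\tau\to 0$ legitimate.
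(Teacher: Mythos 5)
Your proof is correct and coincides, step for step, with the paper's argument: fix $\tau>0$, use \eqref{regvss} and Proposition~\ref{prop.compFG} to see that $u(\tau)$ satisfies \eqref{wp1} and \eqref{wp2} with $\kappa=\gamma$, let $u^\tau$ solve \eqref{eq1} from time $\tau$ with datum $u(\tau)$, apply Proposition~\ref{pr.wp2} (time-shifted), compare $u\le u^\tau$ for the subsolution case, identify $u=u^\tau$ by uniqueness for the solution case, and send $\tau\to0$. Your closing caveat about the uniformity of $K_\gamma$ in $\tau$ is exactly the right thing to check, and the paper addresses it in the remark following Proposition~\ref{pr.wp2}.
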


\begin{proof}
At a formal level, since $u$ is a very singular subsolution, its
initial condition is somehow concentrated at $x=0$. It thus
``vanishes'' outside the origin and the conditions on the initial
data in Proposition~\ref{pr.wp2} are fulfilled. As more regularity
on the initial condition is needed to apply this result, we provide
a rigorous proof now. Consider $\tau>0$. According to
\eqref{regvss} and \eqref{comp.VSS}, $u(\tau)$ satisfies \eqref{wp1} and \eqref{wp2} with $\kappa=\gamma$ and we infer from Proposition~\ref{pr.wp2} that the solution $u^\tau$ to \eqref{eq1} in $(\tau,\infty)\times\real^N$ with initial condition $u^\tau(\tau)=u(\tau)$ satisfies
\begin{align*}
(t-\tau)^{\alpha-N\beta} \|u^\tau(t)\|_{1} + (t-\tau)^{\alpha} \|u^\tau(t)\|_{\infty} \leq & K_\gamma, \\
(t-\tau)^{\alpha+\beta} \|\nabla u^\tau(t)\|_{\infty} \leq & K_\gamma,
\end{align*}
for $t>\tau$. Now, if $u$ is a very singular subsolution to \eqref{eq1}, the comparison principle gives $u\le u^\tau$ in $(\tau,\infty)\times\real^N$ and \eqref{decayVSS} follows at once from the previous estimate after letting $\tau\to 0$. Next, if $u$ is a very singular subsolution to \eqref{eq1}, we obviously have $u^\tau=u$ and thus \eqref{gradestVSS}.
\end{proof}

Finally, the last preliminary result concerns some local estimates
on small balls for very singular subsolutions. It is similar to \cite[Lemma 3.6]{BKL04} for $p=2$,  and its proof adapts an argument from \cite[p. 186-187]{CLS}.

\begin{proposition}\label{prop.local}
For $y\in\real^N$ and $\varrho>0$, let $\sigma_{y,\varrho}$ be
the solution to
\begin{equation}
-\Delta_{p}\sigma_{y,\varrho}=1 \quad \hbox{in}\
\ B_\varrho(y),\qquad \sigma_{y,\varrho}=0 \quad \hbox{on} \ \
\partial B_\varrho(y).\label{sigma}
\end{equation}
For every $\lambda\in (0,\infty)$, there exists $A_{\lambda,\varrho}>0$ depending only on $N$, $p$, $\varrho$, and $\lambda$ such that, if $u$ is a very singular subsolution
to \eqref{eq1}, $y\in\real^N\setminus\{0\}$, and $0<\varrho<|y|$, we have
\begin{equation}\label{locest.VSS}
u(t,x) \leq \lambda\
e^{A_{\lambda,\varrho}t}\ \exp\left(\frac{1}{\sigma_{y,\varrho}(x)}\right), \quad (t,x)\in(0,\infty)\times B_\varrho(y).
\end{equation}
\end{proposition}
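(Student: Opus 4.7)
The natural approach is to construct an explicit supersolution
\[
\Sigma(t,x) := \lambda\,e^{At}\,\exp\left(\frac{1}{\sigma_{y,\varrho}(x)}\right),
\]
for a suitable $A=A_{\lambda,\varrho}$, and to apply the viscosity comparison principle \cite[Theorem~3.9]{OS} on $(0,\infty)\times B_\varrho(y)$. The auxiliary function $\sigma_{y,\varrho}$ is used precisely because it forces $\Sigma\to+\infty$ on $\partial B_\varrho(y)$, which dominates the (finite) $u$ on the lateral boundary, while the prefactor $\lambda$ is there to absorb the initial data via Proposition~\ref{prop.zero}.

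The technical heart is the supersolution calculation. Using $\nabla\Sigma=-\Sigma\,\nabla\sigma_{y,\varrho}/\sigma_{y,\varrho}^2$ and $-\Delta_p\sigma_{y,\varrho}=1$, a product-rule computation yields
\[
\Delta_p\Sigma = \Sigma^{p-1}\left[\frac{(p-1)(1+2\sigma_{y,\varrho})|\nabla\sigma_{y,\varrho}|^p}{\sigma_{y,\varrho}^{2p}} + \frac{1}{\sigma_{y,\varrho}^{2(p-1)}}\right],
\]
so that $\Delta_p\Sigma/\Sigma = (\lambda e^{At})^{p-2}\exp((p-2)/\sigma_{y,\varrho})\,G(x)$ for a function $G$ which is at worst polynomial in $1/\sigma_{y,\varrho}$. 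The crucial point, and the reason the argument is restricted to the fast-diffusion range, is that $p-2<0$: the decay of $\exp((p-2)/\sigma_{y,\varrho})$ as $\sigma_{y,\varrho}\to 0$ dominates every polynomial blow-up of $G$ near $\partial B_\varrho(y)$, so $G(x)\exp((p-2)/\sigma_{y,\varrho})$ is uniformly bounded on $B_\varrho(y)$; moreover $(\lambda e^{At})^{p-2}\le\lambda^{p-2}$ for all $t\ge 0$. Hence $\Delta_p\Sigma/\Sigma \le C_0(N,p,\varrho)\,\lambda^{p-2}$, and setting $A_{\lambda,\varrho}:= C_0(N,p,\varrho)\,\lambda^{p-2}$ and discarding the non-negative term $|\nabla\Sigma|^q$ gives $\partial_t\Sigma-\Delta_p\Sigma+|\nabla\Sigma|^q\ge 0$, i.e., $\Sigma$ is a classical (hence viscosity) supersolution of \eqref{eq1}.

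To conclude I would apply comparison on a slightly shrunken cylinder. For fixed $T>0$ and any $\varepsilon>0$ small, the blow-up of $\Sigma$ as $x\to\partial B_\varrho(y)$, together with the local $L^\infty$ bound on $u$ furnished by Proposition~\ref{prop.decayVSS}, ensures $\Sigma\ge u$ on $(0,T)\times(\bar B_\varrho(y)\setminus B_{\varrho-\varepsilon}(y))$. Since $\bar B_\varrho(y)\subset\real^N\setminus\{0\}$, Proposition~\ref{prop.zero} supplies $\tau_\lambda>0$ such that $u(\tau,\cdot)\le\lambda$ on $\bar B_\varrho(y)$ for every $\tau\in(0,\tau_\lambda)$, and then $\Sigma(\tau,x)\ge\lambda\ge u(\tau,x)$ because $\exp(1/\sigma_{y,\varrho})\ge 1$. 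The viscosity comparison principle applied on $(\tau,T)\times B_{\varrho-\varepsilon}(y)$, followed by $\varepsilon\to 0$, $\tau\to 0$, and $T\to\infty$, yields \eqref{locest.VSS}. The main obstacle is really the boundary-layer analysis above: one needs the margin $2-p>0$ between the exponent of $\exp(1/\sigma_{y,\varrho})$ in $\partial_t\Sigma$ and that of $\exp((p-1)/\sigma_{y,\varrho})$ in $\Delta_p\Sigma$ to beat the polynomial singularity $\sigma_{y,\varrho}^{-2(p-1)}$, and this is what dictates both the exponential ansatz for $\Sigma$ and the dependence of $A_{\lambda,\varrho}$ on $\lambda$ through $\lambda^{p-2}$.
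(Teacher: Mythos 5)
Your proof is correct and follows essentially the same strategy as the paper's: both take the ansatz $\lambda e^{At}\exp(1/\sigma_{y,\varrho})$, compute $\Delta_p$ via the product rule and $-\Delta_p\sigma_{y,\varrho}=1$ to arrive at the same expression $w^{p-1}\bigl[\sigma^{-2(p-1)}+(p-1)(1+2\sigma)|\nabla\sigma|^p\sigma^{-2p}\bigr]$, exploit $2-p>0$ (so that $\exp((2-p)/\sigma)$ dominates the polynomial singularity $\sigma^{-2p}$ near $\partial B_\varrho(y)$) to choose $A_{\lambda,\varrho}\propto\lambda^{p-2}$, discard the nonnegative $|\nabla w|^q$ term, and close with the comparison principle using Proposition~\ref{prop.zero} at $t=0$ and the blow-up of $\sigma^{-1}$ on $\partial B_\varrho(y)$. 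The only (immaterial) difference is procedural: the paper compares directly on $(0,\infty)\times B_\varrho(y)$ using $w=+\infty$ on the lateral boundary, whereas you approximate via shrunken cylinders $(\tau,T)\times B_{\varrho-\varepsilon}(y)$ and pass to the limits $\varepsilon\to0$, $\tau\to0$, $T\to\infty$.
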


\begin{proof}
We fix $y\in\real^N$, $\varrho\in (0,|y|)$, $\lambda>0$, and define
$$
w(t,x):=\lambda\ e^{At}\ \exp\left(\frac{1}{\sigma(x)}\right), \qquad (t,x)\in (0,\infty)\times B_\varrho(y),
$$
where $\sigma=\sigma_{y,\varrho}$, the dependence on $y$ and
$\varrho$ being omitted for simplicity. We wish to choose $A>0$
such that
\begin{equation}\label{interm6}
\partial_t w - \Delta_{p}w + |\nabla w|^q\geq 0 \quad \hbox{in} \ (0,\infty)\times B_\varrho(y).
\end{equation}
To this end, we calculate:
$$
\partial_t w(t,x)=A\ w(t,x), \quad \nabla
w(t,x)=-\frac{w(t,x)}{\sigma(x)^2}\ \nabla\sigma(x),
$$
hence
$$
|\nabla w(t,x)|^q = \frac{|\nabla\sigma(x)|^q}{\sigma(x)^{2q}}\ w(t,x)^q
$$
and
$$
|\nabla w(t,x)|^{p-2}\nabla w(t,x) = - \frac{|\nabla\sigma(x)|^{p-2}}{\sigma(x)^{2(p-1)}}\ w(t,x)^{p-1}\ \nabla\sigma(x).
$$
It follows from \eqref{sigma} that
\begin{equation*}
\begin{split}
\Delta_{p} w(t,x) = & 2(p-1)\ \frac{|\nabla\sigma(x)|^p}{\sigma(x)^{2p-1}}\ w(t,x)^{p-1} + (p-1)\ \frac{|\nabla\sigma(x)|^p}{\sigma(x)^{2p}}\ w(t,x)^{p-1} \\
& - \frac{w(t,x)^{p-1}}{\sigma(x)^{2(p-1)}}\ \Delta_p\sigma(x)\\
= & \frac{w(t,x)^{p-1}}{\sigma(x)^{2p}}\ \left[ \sigma(x)^{2} + (p-1)\ (1+2\sigma(x))\ |\nabla\sigma(x)|^{p} \right].
\end{split}
\end{equation*}
Gathering all the previous calculations, we obtain
\begin{align*}
\partial_t w - \Delta_{p} w + |\nabla
w|^q = & w^{p-1} \left\{ A\ w^{2-p} - \frac{\sigma^2 + (p-1)(1+2\sigma) |\nabla\sigma|^p}{\sigma^{2p}} + \frac{|\nabla\sigma|^{q}}{\sigma^{2q}}\ w^{q-p+1} \right\} \\
\ge & w^{p-1} \left\{ \lambda^{2-p} A\ \exp{\left\{ \frac{2-p}{\sigma}
\right\}} - \frac{\left\| \sigma^2 + (1+2\sigma) |\nabla\sigma|^p
\right\|_{L^{\infty}(B_{\varrho}(y))}}{\sigma^{2p}} \right\}.
\end{align*}
Setting $\mu_p := \inf_{r>0}{\left\{ e^r\ r^{-2p} \right\}}>0$, we end up with
$$
\partial_t w - \Delta_{p} w + |\nabla
w|^q \ge \frac{w^{p-1}}{\sigma^{2p}} \left\{ \lambda^{2-p} (2-p)^{2p}
\mu_p A - \left\| \sigma^2 + (1+2\sigma) |\nabla\sigma|^p
\right\|_{L^{\infty}(B_{\varrho}(y))} \right\}.
$$
Since $\sigma(x) = \varrho^p\ \sigma_{0,1}((x-y)/\varrho)$ for $x\in
B_\varrho(y)$, we conclude that \eqref{interm6} holds true for a
sufficiently large constant $A_{\lambda,\varrho}>0$ which depends
only on $N$, $p$, $\lambda$, and $\varrho$.

With this choice, $w$ is a supersolution to \eqref{eq1} in
$(0,\infty)\times B_\varrho(y)$ which satisfies additionally
$w(0,x)\ge 0 = u(0,x)$ for $x\in B_\varrho(y)$ by
Proposition~\ref{prop.zero} and $w(t,x)=\infty>u(t,x)$ for $(t,x)\in
(0,\infty)\times\partial B_\varrho(y)$ by \eqref{sigma}. The
estimate \eqref{locest.VSS} then follows by the comparison
principle.
\end{proof}

\subsection{The minimal very singular solution}\label{sec4min}

In this section we will construct a special very singular solution
and prove that it is minimal among all the very singular solutions
and has a self-similar form. As a consequence, it will coincide with
the unique radially symmetric self-similar very singular solution
obtained in \cite{IL2}, see Theorem~\ref{th.VSSunique}. Recalling
the notation $u_M$ for the fundamental solution to \eqref{eq1} with
mass $M>0$, we begin with the following preliminary result.

\begin{lemma}\label{lemma.fs1}
Let $u$ be a very singular supersolution to \eqref{eq1} and assume further that
$$
u\in C(Q_{\infty}) \ {\rm and} \ u(t,x)\leq \Gamma_{p,q}(|x|),
\qquad (t,x)\in Q_\infty\,.
$$
Then, for any $M>0$, we have $u_M\leq u$ in $Q_\infty$.
\end{lemma}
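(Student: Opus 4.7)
The plan is to realize $u_M$ as the locally uniform limit of solutions whose initial data are dominated pointwise by $u(\tau_k,\cdot)$ for a sequence $\tau_k\downarrow 0$, and then to pass to the limit in the parabolic comparison inequality.

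First I would use \eqref{VSS1} to select $\tau_k\downarrow 0$ and radii $r_k\downarrow 0$ with
$$
\int_{B_{r_k}(0)} u(\tau_k,x)\,dx \ge 2M.
$$
Picking a cutoff $\chi_k\in C_c^\infty(B_{2r_k}(0))$ with $0\le \chi_k\le 1$ and $\chi_k\equiv 1$ on $B_{r_k}(0)$, I would set
$$
w_0^k := \lambda_k\,\chi_k\,u(\tau_k,\cdot), \qquad \lambda_k := \frac{M}{\int_{\real^N} \chi_k(x)\,u(\tau_k,x)\,dx}\in (0,1/2].
$$
By construction $0\le w_0^k\le u(\tau_k,\cdot)$ pointwise, $\|w_0^k\|_1=M$, $\supp w_0^k\subset B_{2r_k}(0)$, and $w_0^k$ belongs to the class \eqref{wp1} thanks to the regularity \eqref{regvss} of $u(\tau_k,\cdot)$. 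Letting $w^k$ be the viscosity solution of \eqref{eq1}--\eqref{inco} with initial datum $w_0^k$ supplied by Proposition~\ref{pr.wp1}, the comparison principle applied to $w^k$ and the time-translated viscosity supersolution $(t,x)\mapsto u(t+\tau_k,x)$ yields
$$
w^k(t,x) \le u(t+\tau_k,x), \qquad (t,x)\in Q_\infty.
$$

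To conclude I would send $k\to\infty$. Since $r_k\to 0$ and $\|w_0^k\|_1=M$, for every $\psi\in BC(\real^N)$ one has $\int_{\real^N}\psi\,w_0^k\,dx\to M\psi(0)$, which is precisely the approximation condition \eqref{aprox.cond2} for $u_0=M\delta_0$. The compactness-and-convergence argument from the existence part of Theorem~\ref{th.uniqfund}, combined with the uniqueness furnished by Corollary~\ref{cor.fund}, then ensures that the whole sequence $(w^k)$ converges locally uniformly in $Q_\infty$ to $u_M$. Using the continuity $u\in C(Q_\infty)$ and $\tau_k\to 0$, one has $u(t+\tau_k,x)\to u(t,x)$ at every $(t,x)\in Q_\infty$; passing to the limit in the previous inequality gives the desired bound $u_M\le u$ on $Q_\infty$.

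The main obstacle is arranging the pointwise domination $w_0^k\le u(\tau_k,\cdot)$, since a naive approximation of $M\delta_0$ by smooth bumps concentrated near the origin has no reason to lie below $u(\tau_k,\cdot)$ (no pointwise lower bound on $u$ near the origin at positive times is available). The construction above circumvents this by building the approximant \emph{from $u(\tau_k,\cdot)$ itself}, using \eqref{VSS1} only to capture enough mass in arbitrarily small balls, so that pointwise domination is automatic. The hypothesis $u\le\Gamma_{p,q}$ then enters only to ensure that both sides of the comparison inequality have the decay at infinity required to invoke the comparison principle on all of $\real^N$.
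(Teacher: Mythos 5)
Your proof is correct and takes essentially the same approach as the paper: construct initial data of mass $M$ dominated pointwise by $u(\tau_k,\cdot)$ with $\tau_k\downarrow 0$, solve, compare with the time-shifted supersolution, extract the limit via the compactness machinery of Theorem~\ref{th.uniqfund}, identify the limit with $u_M$ via uniqueness, and pass to the limit in the comparison inequality. The only cosmetic difference is that you build the approximants explicitly with cutoffs concentrating near the origin (so the initial trace identification is immediate from the shrinking supports, using only \eqref{VSS1}), whereas the paper states the existence of suitable $u_{0,k}$ abstractly and then invokes \eqref{VSS2} to check that the initial trace is $M\delta_0$.
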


\begin{proof}
Fix $M>0$. We borrow ideas from the proofs of \cite[Lemma 3.7]{BKL04} and Theorem~\ref{th.uniqfund} above. As $u$ is
a very singular supersolution to \eqref{eq1}, we have
$\|u(t)\|_{1}\longrightarrow\infty$ as $t\to 0$ and, for each $k\ge 1$, there exists a non-negative function $u_{0,k}\in L^1(\real^N)\cap W^{1,\infty}(\real^N)$ such that
\begin{equation}\label{interm7}
\|u_{0,k}\|_1=M, \quad 0\le u_{0,k}(x)\leq u(1/k,x) \le \Gamma_{p,q}(|x|), \ \hbox{for} \ \hbox{any} \ x\in\real^N.
\end{equation}
Denoting the solution to \eqref{eq1} with initial condition $u_{0,k}$ by $u_k$, we argue as in the proof of the existence part of Theorem~\ref{th.uniqfund} to
find a non-negative function $\tilde{u}\in C(Q_\infty)$ and a subsequence of $(u_k)_k$ (not relabeled) with the following properties:
\begin{equation}
\begin{minipage}{12cm}
$\tilde{u}$ is a solution to \eqref{eq1} in $Q_\infty$ and satisfies the estimates \eqref{wp101}-\eqref{wp102} with $C_s(M)$ and \eqref{wp3} with $\kappa=\gamma$.
\end{minipage}
\label{pim}
\end{equation}
and
\begin{equation}
u_k\longrightarrow \tilde{u} \;\;\text{ in }\;\; C([\tau,T]\times K)
\label{pam}
\end{equation}
for all compact subsets $K$ of $\real^N$ and $\tau<t<T$.

It remains to identify the initial condition taken by $\tilde{u}$. On the one hand, since $u$ is a supersolution to \eqref{eq1}, it readily follows from \eqref{interm7} that
$$
u_k(t,x) \le u\left( t + \frac{1}{k} , x \right) \le \Gamma_{p,q}(|x|)\,, \qquad (t,x)\in Q_\infty\,,
$$
whence, owing to \eqref{pam} and the continuity of $u$ in
$Q_\infty$,
\begin{equation}
\tilde{u}(t,x) \le u(t,x) \le \Gamma_{p,q}(|x|)\,, \qquad (t,x)\in Q_\infty\,. \label{poum}
\end{equation}
On the other hand, consider $\psi\in C_0^{\infty}(\real^N)$ and $t\in(0,1)$. Owing to \eqref{interm7}, we may use Proposition~\ref{pr.wp2b}~(a) and deduce that, for all $k\ge 1$,
\begin{equation}
\begin{split}
\left| \int_{\real^N} \left( u_k(t,x)- u_{0,k}(x) \right) \psi(x)\,dx \right| \le C(M,1)\ & \left[ \|\psi\|_\infty\ t^{(N+1)(q_*-q)\eta} \right. \\ & + \left. \|\nabla\psi\|_{p/(2-p)}\ t^{1/p} \right].
\end{split} \label{b00}
\end{equation}
It also follows from \eqref{interm7} that, for $r>0$ and $k\ge 1$,
\begin{align*}
&\left|\int_{\real^N} u_{0,k}(x) \psi(x) \,dx - M\ \psi(0)\right| = \left|\int_{\real^N} u_{0,k}(x) (\psi(x)-\psi(0)) \,dx \right|\\
&\leq 2\|\psi\|_{\infty}\ \int_{\{|x|\geq r\}} u(1/k,x) \,dx + \left( \int_{\{|x|\leq
r\}} u_{0,k}(x) \,dx\right)\ \sup\limits_{|x|\leq
r}{\left\{ |\psi(x)-\psi(0)| \right\}} \\
& \leq 2\|\psi\|_{\infty}\int_{\{|x|\geq
r\}} u(1/k,x) \,dx + M \sup\limits_{|x|\leq
r}{\left\{ |\psi(x)-\psi(0)| \right\}}.
\end{align*}
Combining \eqref{b00} and the above estimate, we obtain, for $k\ge 1$ and $r>0$,
\begin{align*}
& \left| \int_{\real^N} \tilde{u}(t,x)\ \psi(x)\, dx - M\ \psi(0) \right| \\
\le & \left| \int_{\real^N} \left( \tilde{u}(t,x) - u_k(t,x) \right) \psi(x)\, dx \right| + \left|\int_{\real^N} \left( u_k(t,x) - u_{0,k}(x) \right) \psi(x)\,dx \right| \\
& + \left| \int_{\real^N} u_{0,k}(x)\ \psi(x) \,dx - M \psi(0) \right|\\
\le & \int_{\real^N} \left| \tilde{u}(t,x) - u_k(t,x) \right| |\psi(x)|\, dx \\
& + C(M,1)\ \left[ \|\psi\|_\infty\ t^{(N+1)(q_*-q)\eta} + \|\nabla\psi\|_{p/(2-p)}\ t^{1/p} \right] \\
& + 2\|\psi\|_{\infty}\int_{\{|x|\geq
r\}} u(1/k,x) \,dx + M \sup\limits_{|x|\leq
r}{\left\{ |\psi(x)-\psi(0)| \right\}}.
\end{align*}
Since $t>0$, $r>0$, and $\psi$ is compactly supported, we first let $k\to\infty$ in the above inequality and use \eqref{VSS2} and \eqref{pam} to conclude that
\begin{align*}
\left| \int_{\real^N} \tilde{u}(t,x)\ \psi(x)\, dx - M\ \psi(0) \right| \le & C(M,1)\ \left[ \|\psi\|_\infty\ t^{(N+1)(q_*-q)\eta} + \|\nabla\psi\|_{p/(2-p)}\ t^{1/p} \right] \\
& + M \sup\limits_{|x|\leq
r}{\left\{ |\psi(x)-\psi(0)| \right\}}.
\end{align*}
We then let $t\to 0$ and $r\to 0$ and end up with
\begin{equation}\label{interm14}
\lim\limits_{t\to 0} \int_{\real^N} \tilde{u}(t,x)\ \psi(x) \,dx = M\ \psi(0)
\end{equation}
for any $\psi\in C_0^{\infty}(\real^N)$. By a standard density
argument, we extend \eqref{interm14} to test functions $\psi\in
C_0(\real^N)$. In order to extend \eqref{interm14} to test functions in $BC(\real^N)$, we proceed as in the proof of the existence part of Theorem~\ref{th.uniqfund} with the difference that the control for large $x$ is here provided by $\Gamma_{p,q}$ thanks to the upper bound \eqref{poum} and Lemma~\ref{le.wp6}. The uniqueness statement of Theorem~\ref{th.uniqfund} then implies that $\tilde{u}=u_M$. Recalling \eqref{poum} completes the proof.
\end{proof}

The next result shows more properties of the fundamental solutions $u_M$.

\begin{lemma}\label{lemma.fs2}
\begin{itemize}
\item[(a)] For each $M>0$ and $t>0$, $u_{M}(t)$ is a radially symmetric function, and $u_{M_1}(t)\leq u_{M_2}(t)$ if $0<M_1\leq M_2<\infty$.

\item[(b)] For each $M>0$, the function $u_M$ satisfies
\begin{equation}\label{FS.bound1}
0\leq u_{M}(t,x)\leq \Gamma_{p,q}(|x|), \quad (t,x)\in Q_\infty
\end{equation}
as well as the estimates \eqref{wp101}-\eqref{wp102} with $C_s(M)$ and \eqref{wp3} with $\kappa=\gamma$.

\item[(c)] For each $M>0$ and any $r>0$, there exist a constant $C(r)$ depending only on $r$, $p$, $q$, and $N$ such that
\begin{equation}\label{FS.bound2}
\int_{\{|x|\geq r\}} u_{M}(t,x)\,dx\leq C(r)\ t^{1/p}\,, \qquad t\in (0,1).
\end{equation}
\end{itemize}
\end{lemma}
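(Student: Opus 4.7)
The plan is to establish the three items in order, each one leveraging either the approximating sequence built in the existence proof of Theorem~\ref{th.uniqfund} or the option of restarting the evolution at a positive time $\tau>0$, at which $u_M(\tau)$ is already a classical $L^1(\real^N) \cap W^{1,\infty}(\real^N)$ datum. Throughout, I would invoke the uniqueness statement of Theorem~\ref{th.uniqfund} (or of Proposition~\ref{pr.wp1} once restarted) to identify any solution obtained from a shifted Cauchy problem with $u_M$ itself.

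For part (a), I would deduce radial symmetry from the rotational invariance of \eqref{eq1}: for every orthogonal matrix $R$, the function $(t,x) \mapsto u_M(t, Rx)$ is a viscosity solution to \eqref{eq1} with initial datum $M\delta_0$, and Theorem~\ref{th.uniqfund} forces it to coincide with $u_M$. For the monotonicity, I would construct the approximating sequences of Theorem~\ref{th.uniqfund} from a single non-negative mollifier $\rho \in C_0^\infty(\real^N)$ of integral one by setting $u_{0,k}^M(x) := M k^N \rho(kx)$. The pointwise inequality $u_{0,k}^{M_1} \le u_{0,k}^{M_2}$ whenever $M_1 \le M_2$ propagates via the parabolic comparison principle to $u_k^{M_1} \le u_k^{M_2}$ and then, thanks to \eqref{gaston}, to $u_{M_1} \le u_{M_2}$.

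For part (b), I would take the approximating data supported in $B_{1/k}(0)$, so that $R(u_{0,k}) \le 1/k$ in the notation of \eqref{radius.initial}. Lemma~\ref{le.wp3} then yields $u_k(t,x) \le \Gamma_{p,q}(|x| - 1/k)$ for $|x| > 1/k$, and letting $k \to \infty$ gives the pointwise bound \eqref{FS.bound1} using the continuity of $\Gamma_{p,q}$ away from zero. The estimates \eqref{wp101}-\eqref{wp102} with constant $C_s(M)$ are already recorded as a byproduct of the existence proof of Theorem~\ref{th.uniqfund}. Finally, to obtain \eqref{wp3} with $\kappa = \gamma$, I would observe that, by \eqref{FS.bound1}, $u_M(\tau)$ satisfies both \eqref{wp1} and \eqref{wp2} with $\kappa = \gamma$ for every $\tau > 0$; by Proposition~\ref{pr.wp1} the unique solution to the Cauchy problem with datum $u_M(\tau)$ at time $\tau$ coincides with $u_M$, so Proposition~\ref{pr.wp2} gives \eqref{wp3} with $t$ replaced by $t-\tau$ and a constant $K_\gamma$ independent of $\tau$, and sending $\tau \to 0$ concludes.

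For part (c), the strategy is to invoke Proposition~\ref{pr.wp2b}(b) for $u_M$ restarted at time $\tau > 0$, which is legitimate thanks to the bound $u_M(\tau) \le \Gamma_{p,q}$ of part (b). Fixing a non-negative $\psi_R \in C_0^\infty(\real^N)$ with $\psi_R \equiv 1$ on $\{r \le |x| \le R\}$ and $\psi_R \equiv 0$ on $B_{r/2}(0) \cup \{|x| \ge 2R\}$, that proposition yields
\begin{equation*}
\int_{\real^N} u_M(t,x) \psi_R(x)\, dx \le \int_{\real^N} u_M(\tau,x) \psi_R(x)\, dx + C(\gamma, r) \|\nabla \psi_R\|_{p/(2-p)} (t-\tau)^{1/p}
\end{equation*}
for $t > \tau$. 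Since $\psi_R$ vanishes near the origin and $u_M(\tau) \to M\delta_0$ vaguely as $\tau \to 0$, the first term on the right-hand side tends to zero. It remains to let $R \to \infty$ and to verify that $\|\nabla \psi_R\|_{p/(2-p)}$ admits an $R$-independent bound; a direct computation using $p > p_c = 2N/(N+1)$, equivalent to $N - p/(2-p) < 0$, shows that the contribution of the outer annulus $\{R \le |x| \le 2R\}$ to $\|\nabla \psi_R\|_{p/(2-p)}^{p/(2-p)}$ decays to zero as $R \to \infty$. This joint passage to the limit in $\tau$ and $R$ is the delicate point and the main technical obstacle of the lemma; parts (a) and (b) are essentially direct applications of the tools developed in Section~\ref{sec2} and of the approximation machinery already set up for Theorem~\ref{th.uniqfund}.
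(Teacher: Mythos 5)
Your proposal is correct and follows essentially the same strategy as the paper, with a few presentational differences. For part (a), the paper simply defers to \cite[Lemma~3.3]{BL01}, while you spell out the mollifier argument; this is fine once you note that the uniqueness assertion of Theorem~\ref{th.uniqfund} upgrades the subsequential convergence in \eqref{gaston} to convergence of the full approximating sequence for each fixed $M$, so that the pointwise ordering $u_k^{M_1}\le u_k^{M_2}$ passes to the limit. For the bound \eqref{FS.bound1} in part (b), the paper extends Proposition~\ref{prop.zero} to the fundamental solutions and then reproduces the proof of Proposition~\ref{prop.compFG}, whereas you apply Lemma~\ref{le.wp3} to the approximating solutions (whose data are supported in $B_{1/k}(0)$, so $R(u_{0,k})\le 1/k$) and let $k\to\infty$; both routes arrive at the friendly giant $\Gamma_{p,q}$, and yours is somewhat leaner. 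The remainder of part (b) (restart at $\tau>0$ and apply Proposition~\ref{pr.wp2}, then let $\tau\to 0$) coincides with the paper. For part (c), both arguments construct a smooth cutoff vanishing near the origin, invoke \eqref{wp201} for the solution restarted at time $\tau>0$, send $\tau\to 0$ via \eqref{cp.fund} (the cutoff vanishing at the origin kills the $M\delta_0$ contribution), and then let $R\to\infty$ using that $N<p/(2-p)$ (i.e.\ $p>p_c$) makes the outer-annulus contribution to $\|\nabla\psi_R\|_{p/(2-p)}$ vanish; the paper additionally isolates and bounds the far tail $\int_{\{|x|>R\}}\Gamma_{p,q}$ before passing to the limit, but that is a cosmetic difference.
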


\begin{proof}
The proof of part~(a) is identical to the proof of
\cite[Lemma~3.3]{BL01} to which we refer. Next, it is easy to see that Proposition~\ref{prop.zero} is also valid for the fundamental solutions $u_M$ and the estimate~\eqref{FS.bound1}
can be proved as Proposition~\ref{prop.compFG}. We then infer from \eqref{stb0} and \eqref{FS.bound1} that Propositions~\ref{pr.wp2a} and~\ref{pr.wp2} can be applied to $(t,x)\longmapsto u_M(t+\tau,x)$ for any arbitrary small $\tau$ from which the validity of \eqref{wp101}-\eqref{wp102} with $C_s(M)$ and \eqref{wp3} with $\kappa=\gamma$ follows after passing to the limit $\tau\to 0$. Finally, let $\tau>0$, $r>0$, and two non-negative functions $\xi\in C_0^\infty(\real^N)$ and $\zeta\in C^\infty(\real^N)$ such that
$$
0\le \xi\le 1\,, \qquad \xi(x)=1 \;\;\text{ if }\;\; |x|<1 \;\;\text{ and }\;\; \xi(x)=0 \;\;\text{ if }\;\; |x|>2
$$
and
$$
0\le \zeta\le 1\,, \qquad \zeta(x)=0 \;\;\text{ if }\;\; |x|<r/2 \;\;\text{ and }\;\;\zeta(x)=1 \;\;\text{ if }\;\; |x|>r\,.
$$
For $R>0$ and $x\in\real^N$, we set $\xi_R(x)=\xi(x/R)$. Since $u(\tau)$ satisfies \eqref{wp2} with $\kappa=\gamma$ by \eqref{FS.bound1} and $\xi_R \zeta\in C_0^\infty(\real^N)$ vanishes in $B_{r/2}(0)$, it follows from \eqref{wp201} that, for $t>\tau$ and $R>0$,
\begin{align*}
\int_{\{r<|x|<R\}} u_M(t,x)\, dx \le & \int_{\real^N} u_M(t,x)\ (\xi_R \zeta)(x)\, dx \\
\le & \int_{\real^N} u_M(\tau)\ (\xi_R \zeta)(x)\, dx + C(\gamma,r/2)\ \|\nabla(\xi_R \zeta)\|_{p/(2-p)}\ (t-\tau)^{1/p}\,.
\end{align*}
Letting $\tau\to 0$, we find, since $\xi_R \zeta$ vanishes in a neighborhood of $x=0$,
$$
\int_{\{r<|x|<R\}} u_M(t,x)\, dx \le C(\gamma,r/2)\ \|\nabla(\xi_R \zeta)\|_{p/(2-p)}\ t^{1/p}\,.
$$
Combining \eqref{FS.bound1} with the previous inequality, we obtain
\begin{align*}
\int_{\{|x|>r\}} u_M(t,x)\, dx \le & \int_{\{r<|x|<R\}} u_M(t,x)\, dx + \int_{\{|x|>R\}} \Gamma_{p,q}(|x|)\, dx \\
\le & C(\gamma,r/2)\ \|\nabla(\xi_R \zeta)\|_{p/(2-p)}\ t^{1/p} + \int_{\{|x|>R\}} \Gamma_{p,q}(|x|)\, dx \\
\le & C(r)\ \left( \|\nabla(\xi_R \zeta)\|_{p/(2-p)}\ t^{1/p} + R^{-(\alpha-N\beta)/\beta} \right)\,.
\end{align*}
Now,
\begin{align*}
\|\nabla(\xi_R \zeta)\|_{p/(2-p)} \le & \| \zeta \nabla\xi_R\|_{p/(2-p)} + \|\xi_R \nabla \zeta\|_{p/(2-p)} \\
\le & R^{-(N+1)(p-p_c)/p} \|\nabla\xi\|_{p/(2-p)} +
\|\nabla\zeta\|_{p/(2-p)}\,,
\end{align*}
and thus
$$
\int_{\{|x|>r\}} u_M(t,x)\, dx \le C(r)\ \left( t^{1/p} +
R^{-(N+1)(p-p_c)/p}\ t^{1/p} + R^{-(\alpha-N\beta)/\beta} \right)\,.
$$
Letting $R\to\infty$ gives \eqref{FS.bound2}.
\end{proof}

We are now ready to construct the minimal very singular solution. By Lemma~\ref{lemma.fs2}, for any $t>0$, the sequence
$(u_{M}(t))_{M>0}$ is non-decreasing and uniformly bounded by
$\Gamma_{p,q}$. Thus, we can define
\begin{equation}\label{minimalVSS}
\overline{U}(t,x) := \sup_{M>0}\{ u_M(t,x)\} = \lim\limits_{M\to\infty}u_{M}(t,x)\,, \qquad (t,x)\in Q_\infty\,.
\end{equation}
Using once more Lemma~\ref{lemma.fs2}, we see that $\overline{U}(t)$
is radially symmetric for any $t>0$. Moreover, a first outcome of
Proposition~\ref{prop.compFG} and Lemma~\ref{lemma.fs1} is that
\begin{equation}
\overline{U} \leq u \;\;\text{ in }\;\; Q_\infty \;\;\text{ for any very singular solution }\;\; u \;\;\text{ to \eqref{eq1}.}
\end{equation}
It remains to show that $\overline{U}$ is a very singular solution to \eqref{eq1}.

\begin{proposition}\label{prop.minimalVSS}
The function $\overline{U}$ constructed in \eqref{minimalVSS} is a very singular solution to \eqref{eq1}. Moreover, $\overline{U}=U$, the latter being defined in Theorem~\ref{th.VSSunique}.
\end{proposition}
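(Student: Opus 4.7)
The plan is to verify, in turn, that $\overline{U}$ is a viscosity solution of \eqref{eq1}, that it meets the regularity and initial-trace conditions of Definition~\ref{def.VSS}, and that it is self-similar. Once self-similarity and radial symmetry are established, Theorem~\ref{th.VSSunique} will force $\overline{U}=U$.

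First I would show $\overline{U}\in C(Q_\infty)$ is a viscosity solution to \eqref{eq1} with $\overline{U}(t)\in L^1(\real^N)\cap W^{1,\infty}(\real^N)$ for $t>0$. The key input is that the bounds of Lemma~\ref{lemma.fs2}~(b) are $M$-independent, in particular $u_M\le \Gamma_{p,q}(|\cdot|)$ and $\|\nabla u_M(t)\|_\infty\le K_\gamma t^{-\alpha-\beta}$. Combined with the space-implies-time modulus of Lemma~\ref{le.wp5}, these yield local equicontinuity of $(u_M)_{M>0}$ on $Q_\infty$; coupled with monotonicity in $M$, this upgrades pointwise convergence to locally uniform convergence, so $\overline{U}$ is continuous and the stability property \cite[Theorem~6.1]{OS} makes it a viscosity solution of \eqref{eq1}. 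The desired regularity of $\overline{U}(t)$ then follows by Fatou (for $L^1$) and the uniform gradient bound (for $W^{1,\infty}$). For the initial-trace conditions, the tail estimate \eqref{FS.bound2} is also $M$-independent, so monotone convergence yields $\int_{\{|x|\ge r\}}\overline{U}(s,x)\,dx\le C(r)\,s^{1/p}$ and thus \eqref{VSS2}; while the inequality $\overline{U}\ge u_M$ combined with \eqref{FS.bound2} gives
\[
\int_{\{|x|\le r\}}\overline{U}(s,x)\,dx \;\ge\; \|u_M(s)\|_1 - C(r)\,s^{1/p},
\]
and since $\|u_M(s)\|_1\to M$ as $s\to 0$ by \eqref{cp.fund} applied with $\psi\equiv 1\in BC(\real^N)$, taking $\liminf_{s\to 0}$ and then $M\to\infty$ produces \eqref{VSS1}.

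Finally, self-similarity follows from the scaling invariance of \eqref{eq1}. For any $\lambda>0$, the function $u_M^\lambda(t,x):=\lambda^\alpha u_M(\lambda t,\lambda^\beta x)$ is still a viscosity solution of \eqref{eq1} by the very choice \eqref{expas} of $(\alpha,\beta)$; a change of variables in \eqref{cp.fund} identifies its initial trace as $\lambda^{\alpha-N\beta}M\,\delta_0$, so the uniqueness part of Corollary~\ref{cor.fund} forces $u_M^\lambda=u_{\lambda^{\alpha-N\beta}M}$. Since $M\mapsto \lambda^{\alpha-N\beta}M$ is a bijection of $(0,\infty)$ by \eqref{expas2}, taking the supremum over $M>0$ on both sides gives $\lambda^\alpha\overline{U}(\lambda t,\lambda^\beta x)=\overline{U}(t,x)$; specialising $\lambda=1/t$ writes $\overline{U}$ in the self-similar form \eqref{selfVSS}. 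Radial symmetry of each $\overline{U}(t)$ is inherited from Lemma~\ref{lemma.fs2}~(a), so $\overline{U}$ is a radially symmetric self-similar very singular solution, and Theorem~\ref{th.VSSunique} concludes $\overline{U}=U$. The main technical care will be in the first step, where the $M$-independent bounds must be combined to yield locally uniform convergence and the corresponding passage to the limit in the viscosity formulation; everything else is then essentially dictated by the rigid scaling structure of \eqref{eq1} and the uniqueness of fundamental solutions.
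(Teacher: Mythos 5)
Your proposal is correct and follows essentially the same route as the paper: establish continuity and the viscosity property of $\overline{U}$ via the $M$-independent bounds of Lemma~\ref{lemma.fs2}, Lemma~\ref{le.wp5}, and Arzel\`a--Ascoli plus stability; verify \eqref{VSS1}--\eqref{VSS2} from \eqref{FS.bound2}, monotonicity, and the identification of the initial trace of $u_M$; and obtain self-similarity from the scale invariance of \eqref{eq1} combined with the uniqueness of fundamental solutions, before invoking Theorem~\ref{th.VSSunique}. Your derivation of \eqref{VSS1} via $\int_{\{|x|\le r\}}\overline{U}(s,x)\,dx \ge \|u_M(s)\|_1 - C(r)s^{1/p}$ is slightly more explicit than the paper's direct assertion that $\int_{\{|x|\le r\}}u_M(s,x)\,dx\to M$, but it is the same underlying idea.
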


\begin{proof}
We first prove that $\overline{U}$ has the expected behavior as $t\to 0$. Let $r>0$. On the one hand, if $M>0$, we have $\overline{U}\ge u_M$ in $Q_\infty$ by \eqref{minimalVSS} and thus
\begin{equation*}
\liminf_{t\to 0} \int_{\{|x|\leq r\}} \overline{U}(t,x) \,dx \geq \lim_{t\to 0} \int_{\{|x|\leq r\}} u_{M}(t,x) \,dx =M\,,
\end{equation*}
from which the expected concentration \eqref{VSS1} of $\overline{U}$ at the
origin follows. On the other hand, we infer from the monotone convergence theorem and \eqref{FS.bound2} that
\begin{equation*}
\int_{\{|x|\geq r\}} \overline{U}(t,x) \,dx =
\lim\limits_{M\to\infty} \int_{\{|x|\geq r\}} u_{M}(t,x) \,dx \leq
C(r)\ t^{1/p}\,.
\end{equation*}
Letting $t\to 0$ gives the expected vanishing \eqref{VSS2} outside the origin.

Finally, it follows from Lemma~\ref{lemma.fs2}~(b) that $(u_M)_M$ is bounded in $L^\infty(\tau,\infty;W^{1,\infty}(\real^N))$ for any $\tau>0$. This property and Lemma~\ref{le.wp5} ensure the time equicontinuity of the family $(u_M)_M$ in $(\tau,\infty)\times\real^N$ for all $\tau>0$. We then deduce from the Arzel\`a-Ascoli theorem that $(u_M)_M$ is relatively compact in $C([\tau,T]\times K)$ for all compact subsets $K$ of $\real^N$ and $0<\tau< T$. Recalling \eqref{minimalVSS}, we conclude that $(u_{M})_M$ converges to $\overline{U}$ uniformly in compact subsets of $Q_\infty$. Consequently, thanks to the stability of viscosity solutions \cite[Theorem 6.1]{OS}, $\overline{U}$ is a viscosity solution to
\eqref{eq1}, and thus a very singular solution in the sense of
Definition~\ref{def.VSS}.

It remains to prove that $\overline{U}$ has a self-similar form which follows from the scale invariance of \eqref{eq1} and is now a standard step. Indeed, for $\lambda\in (0,\infty)$ and
$M\in(0,\infty)$, define a rescaled version of $u_M$ by
\begin{equation}\label{rescaling}
u_{M}^{\lambda}(t,x):=\lambda^{(p-q)/(2q-p)}u_{M}(\lambda
t,\lambda^{(q-p+1)/(2q-p)}x)\,, \qquad (t,x)\in Q_\infty.
\end{equation}
By straightforward calculations, we find that $u_{M}^{\lambda}$ is a
solution to \eqref{eq1}. To identify its initial trace, we consider $\psi\in BC(\real^N)$ and write
\begin{equation*}
\begin{split}
\int_{\real^N} u_{M}^{\lambda}(t,x)\ \psi(x)\,dx &= \lambda^{(p-q)/(2q-p)}\ \int_{\real^N} u_{M}(\lambda t,\lambda^{(q-p+1)/(2q-p)}x)\ \psi(x) \,dx\\
&= \lambda^{(N+1)(q_*-q)/(2q-p)}\ \int_{\real^N} u_{M}(\lambda
t,y)\ \psi(\lambda^{-(q-p+1)/(2q-p)}y) \,dy.
\end{split}
\end{equation*}
Letting $t\to 0$, we find that the initial condition of $u_{M}^{\lambda}$ is $\lambda^{(N+1)(q_*-q)/(2q-p)} M \delta_0$. By Theorem~\ref{th.uniqfund}, we obtain $u_{M}^{\lambda} = u_{\lambda^{(N+1)(q_*-q)/(2q-p)}M}$. We now pass to the limit as $M\to\infty$ and deduce from \eqref{minimalVSS} that
\begin{equation*}
\overline{U}(t,x) = \lambda^{(p-q)/(2q-p)}\ \overline{U}(\lambda t,\lambda^{(q-p+1)/(2q-p)}x), \quad (t,x)\in Q_\infty\,.
\end{equation*}
Therefore, $\overline{U}$ has a self-similar form and since it is obviously radially symmetric due to Lemma~\ref{lemma.fs2}~(a) and \eqref{minimalVSS}, we infer from Theorem~\ref{th.VSSunique} that $\overline{U}=U$.
\end{proof}

A further outcome of the above analysis is the following result
which is a straightforward consequence of Lemma~\ref{lemma.fs1},
\eqref{minimalVSS}, and Proposition~\ref{prop.minimalVSS}.

\begin{corollary}\label{cor.nonumber}
If $u$ is a very singular supersolution to \eqref{eq1} in $Q_\infty$ such that
$$
u\in C(Q_{\infty}) \ {\rm and} \ u(t,x)\leq\Gamma_{p,q}(|x|), \qquad
(t,x)\in Q_\infty\,,
$$
then
$$
U(t,x)\leq u(t,x), \qquad (t,x)\in Q_\infty.
$$
\end{corollary}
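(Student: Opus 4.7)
The plan is to chain together the three ingredients cited right before the statement. The hypothesis on $u$ matches exactly the hypothesis of Lemma~\ref{lemma.fs1}, so we may apply that lemma to obtain, for every $M>0$,
\begin{equation*}
u_M(t,x) \leq u(t,x)\,, \qquad (t,x)\in Q_\infty\,.
\end{equation*}
This is the only nontrivial input; once we have it, we pass to the supremum on the left-hand side over $M\in(0,\infty)$ (or equivalently take the monotone limit $M\to\infty$, using that $(u_M)_{M>0}$ is nondecreasing in $M$ by Lemma~\ref{lemma.fs2}(a)) to conclude, by the definition \eqref{minimalVSS} of $\overline{U}$, that
\begin{equation*}
\overline{U}(t,x) \leq u(t,x)\,, \qquad (t,x)\in Q_\infty\,.
\end{equation*}

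Finally, Proposition~\ref{prop.minimalVSS} identifies $\overline{U}$ with the self-similar very singular solution $U$ of Theorem~\ref{th.VSSunique}, so we obtain $U\leq u$ in $Q_\infty$ as required. There is no genuine obstacle here beyond invoking the three results in order; all the real work has already been done in the construction of $\overline{U}$ and in the comparison of $u_M$ against any very singular supersolution bounded above by $\Gamma_{p,q}$.
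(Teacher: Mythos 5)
Your proof is correct and is exactly the argument the paper intends: the corollary is stated as a straightforward consequence of Lemma~\ref{lemma.fs1}, the definition \eqref{minimalVSS} of $\overline{U}$ as $\sup_M u_M$, and the identification $\overline{U}=U$ from Proposition~\ref{prop.minimalVSS}.
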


\subsection{The maximal very singular solution}\label{sec4max}
We begin with the following general result for very singular subsolutions to \eqref{eq1}.

\begin{proposition}\label{prop.maxVSS}
Let $u$ be a very singular subsolution to \eqref{eq1}. Then there
exists a very singular solution $\overline{u}$ such that
$u\leq\overline{u}$ in $Q_\infty$.
\end{proposition}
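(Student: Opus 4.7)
The natural idea is to produce $\overline{u}$ as the monotone limit, along a sequence $t_k \downarrow 0$, of the ``classical'' solutions issued from the traces $u(t_k)$ of the subsolution $u$. Since Proposition~\ref{prop.compFG} already gives the universal bound $u(t,\cdot)\le \Gamma_{p,q}(|\cdot|)$, the slices $u(t_k)$ lie in $L^1(\real^N)\cap W^{1,\infty}(\real^N)$ (by \eqref{regvss}) and satisfy the decay hypothesis \eqref{wp2} with $\kappa=\gamma$, which will give us all the uniform estimates we need from Propositions~\ref{pr.wp2} and~\ref{pr.wp2b}.

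\textbf{Construction.} Fix a decreasing sequence $t_k\downarrow 0$ and let $\tilde u_k\in C([t_k,\infty)\times\real^N)$ be the unique solution of \eqref{eq1} on $(t_k,\infty)\times \real^N$ with initial datum $\tilde u_k(t_k)=u(t_k)$, provided by Proposition~\ref{pr.wp1}. The viscosity comparison principle applied on $(t_k,\infty)\times\real^N$ yields two things at once: first, $u\le \tilde u_k$ there, since $u$ is a subsolution coinciding with $\tilde u_k$ at $t=t_k$; and second, $\tilde u_k\le \tilde u_{k+1}$ on $(t_k,\infty)\times \real^N$, since $\tilde u_{k+1}(t_k)\ge u(t_k)=\tilde u_k(t_k)$ (using that $\tilde u_{k+1}$ dominates $u$ on $(t_{k+1},\infty)$). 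We may then define
\begin{equation*}
\overline u(t,x):=\lim_{k\to\infty} \tilde u_k(t,x),\qquad (t,x)\in Q_\infty.
\end{equation*}
Proposition~\ref{pr.wp2} with $\kappa=\gamma$ furnishes $\|\tilde u_k(t)\|_\infty\le K_\gamma(t-t_k)^{-\alpha}$ and $\|\nabla \tilde u_k(t)\|_\infty \le K_\gamma(t-t_k)^{-\alpha-\beta}$, and Lemma~\ref{le.wp5} then yields time equicontinuity. The Arzel\`a--Ascoli theorem combined with the monotonicity produces uniform convergence on compact subsets of $Q_\infty$, so $\overline u\in C(Q_\infty)$ is a viscosity solution of \eqref{eq1} by the stability result \cite[Theorem~6.1]{OS}. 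Passing to the limit in the above bounds delivers $\overline u(t)\in L^1\cap W^{1,\infty}$ with $\|\overline u(t)\|_\infty\le K_\gamma t^{-\alpha}$ and similar gradient estimates for every $t>0$, hence \eqref{regvss}.

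\textbf{Initial conditions.} Property \eqref{VSS1} is automatic since $u\le \overline u$ gives
\begin{equation*}
\liminf_{t\to 0}\int_{\{|x|\le r\}}\overline u(t,x)\,dx \ge \liminf_{t\to 0}\int_{\{|x|\le r\}} u(t,x)\,dx =\infty.
\end{equation*}
The delicate point, and the main obstacle, is \eqref{VSS2}, because the pointwise monotone limit could in principle inflate the tail. I would handle it as follows. Apply Proposition~\ref{pr.wp2b}~(b) to each $\tilde u_k$ on $(t_k,\infty)$ with a nonnegative $\psi\in C_0^\infty(\real^N)$ vanishing on $B_{r'}(0)$, for some $0<r'<r$. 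This gives
\begin{equation*}
\int_{\real^N}\tilde u_k(t,x)\psi(x)\,dx \le \|\psi\|_\infty \int_{\{|x|\ge r'\}}u(t_k,x)\,dx + C(\gamma,r')\,\|\nabla\psi\|_{p/(2-p)}\,(t-t_k)^{1/p}.
\end{equation*}
The first term on the right tends to $0$ as $k\to\infty$ by \eqref{VSS2} for $u$, and monotone convergence lets us pass to the limit on the left. Choosing $\psi$ to be a cutoff with $\psi\equiv 1$ on $\{r\le|x|\le R\}$, supported in $\{r'<|x|<R+1\}$, this controls $\int_{\{r\le|x|\le R\}}\overline u(t,x)\,dx$ by $C(r',r,R)\,t^{1/p}$; the remaining piece $\int_{\{|x|>R\}}\overline u(t,x)\,dx$ is estimated crudely by $\int_{\{|x|>R\}}\Gamma_{p,q}(|x|)\,dx = C_\gamma R^{N-\alpha/\beta}$, which is small for $R$ large thanks to $\alpha/\beta>N$ (see \eqref{expas2}). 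A standard $\varepsilon/2$ argument (fix $R$ large, then send $t\to 0$) then establishes \eqref{VSS2}, completing the proof that $\overline u$ is a very singular solution dominating $u$.
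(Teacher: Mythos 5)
Your construction coincides with the paper's proof: it too launches solutions from the slices $u(\tau)$ for $\tau\downarrow 0$, obtains monotonicity and the bound $u\leq u^\tau\leq\Gamma_{p,q}$ by comparison, passes to the monotone limit via the Arzel\`a--Ascoli theorem and the stability of viscosity solutions, gets \eqref{VSS1} for free from $u\leq\overline u$, and derives \eqref{VSS2} from a $t^{1/p}$ small-time estimate combined with the $\Gamma_{p,q}$ tail bound. The only cosmetic difference is that you cite Proposition~\ref{pr.wp2b}~(b) with a compactly supported cutoff (hence the extra $R$-splitting), whereas the paper rederives the same inequality directly from the weak formulation with a cutoff $\zeta_r$ supported away from the origin only.
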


\begin{proof}
Fix $\tau>0$ and let $u^\tau$ be the solution to \eqref{eq1} in $(\tau,\infty)\times\real^N$ with initial condition $u^{\tau}(\tau)=u(\tau)$. The comparison principle and Proposition~\ref{prop.compFG} then ensure that
\begin{equation}
u(t,x) \le u^{\tau}(t,x) \le \Gamma_{p,q}(|x|)\,, \qquad (t,x)\in (\tau,\infty)\times\real^N\,. \label{d24a}
\end{equation}

Moreover, the function $u(\tau)$ satisfies \eqref{wp1} and \eqref{wp2}  (with $\kappa=\gamma$) by Proposition~\ref{prop.compFG}  and it follows from Proposition~\ref{pr.wp2} that, for $t>\tau$,
\begin{equation}\label{est1.tau}
(t-\tau)^{\alpha-N\beta} \|u^{\tau}(t)\|_{1} + (t-\tau)^{\alpha}\|u^{\tau}\|_{\infty}\leq K_\gamma,
\end{equation}
and
\begin{equation}\label{est2.tau}
(t-\tau)^{\alpha+\beta} \|\nabla u^{\tau}(t)\|_{\infty}\leq K_\gamma.
\end{equation}
We also notice that, if $0<\tau_1<\tau_2$, the inequality \eqref{d24a} implies that $u^{\tau_2}(\tau_2)=u(\tau_2)\le u^{\tau_1}(\tau_2)$, whence
\begin{equation}
u^{\tau_1}(t,x)\geq u^{\tau_2}(t,x)\,, \qquad (t,x)\in(\tau_2,\infty)\times\real^N\,, \label{d25a}
\end{equation}
by the comparison principle. Owing to \eqref{d24a}, \eqref{est1.tau}, and \eqref{d25a}, we may define the pointwise limit
\begin{equation}\label{est3.tau}
W(t,x):=\sup\limits_{\tau\in (0,t/2)} \{u^{\tau}(t,x)\} = \lim\limits_{\tau\to 0} u^{\tau}(t,x), \quad (t,x)\in Q_\infty.
\end{equation}

The remainder of the proof is devoted to proving that $W$ is a
very singular solution to \eqref{eq1} in $Q_\infty$. Consider $n\ge 1$. By \eqref{est1.tau} and \eqref{est2.tau}, the family $\{u^\tau\ :\ \tau\in (0,1/2n) \}$ is bounded in $L^\infty(1/n,n;W^{1,\infty}(\real^N))$ which allows us to apply Lemma~\ref{le.wp5} and deduce from the Arzel\`a-Ascoli theorem that $\{u^\tau\ :\ \tau\in (0,1/2n) \}$ is relatively compact in $C((1/n,n)\times B_n(0))$. Consequently, the pointwise convergence \eqref{est3.tau} of $(u^\tau)_\tau$ to $W$ can be improved to convergence in $C((1/n,n)\times B_n(0))$ for all $n\ge 1$, from which we deduce that $W$ is a viscosity solution to \eqref{eq1} in $Q_\infty$ by the stability of viscosity solutions \cite[Theorem~6.1]{OS}. We may also use this convergence to pass to the limit as $\tau\to 0$ in \eqref{d24a} and obtain
\begin{equation}
u(t,x) \le W(t,x) \le \Gamma_{p,q}(|x|)\,, \qquad (t,x)\in Q_\infty\,. \label{d27}
\end{equation}

It remains to prove that the function $W$ has the expected behavior as $t\to 0$. Since $u$ is a very singular subsolution to \eqref{eq1}, it satisfies \eqref{VSS1} and so does $W$ by \eqref{d27}. The study of the behavior of $W$ outside the origin requires more work. Let $\zeta\in C^{\infty}(\real^N)$ be such that $0\le\zeta\le 1$,
$$
\zeta(x)=1 \ \hbox{if} \ |x|\geq 1, \quad \zeta(x)=0 \
\hbox{if} \ |x|\leq\frac{1}{2}.
$$
Fix $r>0$ and define $\zeta_r(x) = \zeta(x/r)$ for $x\in\real^N$. It follows from \eqref{def.weak} that, for $t>0$ and $\tau\in (0,t/2)$,
\begin{equation}\label{interm8}
\begin{split}
\int_{\real^N} u^{\tau}(t,x)\ \zeta_r(x) \,dx \le & \int_{\real^N} u^{\tau}(\tau,x)\ \zeta_r(x) \,dx + \int_{\tau}^{t} \int_{\real^N} |\nabla u^{\tau}(s,x)|^{p-1}\ |\nabla\zeta_r(x)| \,dx\,ds\\
\le & \int_{\{|x|\ge r/2\}} u(\tau,x) \,dx \\
& + \int_{\tau}^{t} \int_{\{r/2<|x|<r\}} |\nabla u^{\tau}(s,x)|^{p-1}\ |\nabla \zeta_r(x)| \,dx\,ds,
\end{split}
\end{equation}
since $u^{\tau}(\tau)=u(\tau)$ by definition. On the one hand, Fatou's lemma and \eqref{est3.tau} give
\begin{equation}
\int_{\real^N} W(t,x)\ \zeta_r(x)\ dx \le \liminf_{\tau\to 0} \int_{\real^N} u^\tau(t,x)\ \zeta_r(x)\ dx\,. \label{d29}
\end{equation}
On the other hand, since $u$ is a very singular subsolution, we have
\begin{equation}
\lim_{\tau\to 0} \int_{\{|x|\ge r/2\}} u(\tau,x)\, dx =0\,, \label{d30}
\end{equation}
while \eqref{wp100}, \eqref{d24a}, and \eqref{est1.tau} give, for $s>\tau$,
\begin{align*}
|\nabla u^\tau(s,x)|^{p-1} \le & C \left( \left\| u^\tau\left( \frac{s+\tau}{2} \right) \right\|_\infty^{1/\alpha p} + (s-\tau)^{-1/p} \right)^{p-1}\ \left( u^\tau(s,x) \right)^{2(p-1)/p} \\
\le & C\ (s-\tau)^{-(p-1)/p}\ \Gamma_{p,q}(|x|)^{2(p-1)/p}\,.
\end{align*}
Thus
\begin{equation}
\int_{\tau}^{t} \int_{\{r/2<|x|<r\}} |\nabla u^{\tau}(s,x)|^{p-1}\ |\nabla\zeta_r(x)| \,dx \,ds \leq
C(r)\ t^{1/p} \|\nabla\zeta\|_{\infty}.\label{interm11}
\end{equation}
Combining \eqref{interm8}, \eqref{d29}, \eqref{d30}, and \eqref{interm11} leads us to
\begin{equation*}
\int_{\real^N} W(t,x)\ \zeta_r(x) \,dx \leq
C(r)\ t^{1/p}\ \|\nabla\zeta\|_{\infty}.
\end{equation*}
Using the properties of $\zeta$ and letting $t\to 0$ in the above inequality, we conclude that $W$ satisfies \eqref{VSS2}. Summarizing, we have established that $W$ is a very singular solution to \eqref{eq1} in $Q_\infty$ which lies above $u$ by \eqref{d27}.
\end{proof}

We are now ready to construct the maximal very singular solution to \eqref{eq1}. We denote the set of very singular solutions to \eqref{eq1} in $Q_\infty$ by $\cs$. Since $U\in\cs$, $\cs$ is non-empty and we may define
\begin{equation}\label{maximalVSS}
V(t,x) := \sup\limits_{u\in\cs}\{u(t,x)\}, \quad (t,x)\in Q_\infty.
\end{equation}
We prove next that $V$ is itself a very singular solution to \eqref{eq1}. We begin with the following bounds.

\begin{lemma}\label{est.maxVSS}
For $t>0$, we have
\begin{equation}\label{estmax1}
t^{\alpha}\ \|V(t)\|_{\infty} + t^{\alpha+\beta}\|\nabla
V(t)\|_{\infty} \leq K_\gamma,
\end{equation}
and
\begin{equation}\label{estmax2}
U(t,x)\leq V(t,x)\leq\Gamma_{p,q}(|x|), \quad x\in\real^N.
\end{equation}
\end{lemma}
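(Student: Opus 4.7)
The plan is to deduce everything directly from the uniform estimates on elements of $\cs$ established in Section~\ref{sec4}, since $V$ is defined as a pointwise supremum.

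First, I would apply Proposition~\ref{prop.decayVSS} to an arbitrary $u\in\cs$. Since every element of $\cs$ is a very singular solution (not merely a subsolution), both \eqref{decayVSS} and \eqref{gradestVSS} apply with the \emph{same} constant $K_\gamma$, so that
\[
\|u(t)\|_\infty \le K_\gamma\, t^{-\alpha} \qquad \text{and} \qquad \|\nabla u(t)\|_\infty \le K_\gamma\, t^{-(\alpha+\beta)}
\]
uniformly for $u\in\cs$. Taking the pointwise supremum in the first inequality immediately yields $\|V(t)\|_\infty \le K_\gamma\, t^{-\alpha}$.

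For the gradient bound, the key observation is that $V(t,\cdot)$ inherits the Lipschitz constant of the family. Indeed, the uniform gradient estimate says every $u(t,\cdot)$ is $K_\gamma\, t^{-(\alpha+\beta)}$-Lipschitz, so for any $x_1,x_2\in\real^N$ and $u\in\cs$,
\[
u(t,x_1) \le u(t,x_2) + K_\gamma\, t^{-(\alpha+\beta)}\,|x_1-x_2| \le V(t,x_2) + K_\gamma\, t^{-(\alpha+\beta)}\,|x_1-x_2|.
\]
Taking the supremum over $u\in\cs$ on the left and exchanging the roles of $x_1,x_2$ shows that $V(t,\cdot)$ is Lipschitz with the same constant, from which the bound $\|\nabla V(t)\|_\infty \le K_\gamma\, t^{-(\alpha+\beta)}$ follows at once by Rademacher's theorem. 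This proves \eqref{estmax1}.

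To prove \eqref{estmax2}, the upper bound follows from Proposition~\ref{prop.compFG}: every $u\in\cs$ is a very singular subsolution, hence satisfies $u(t,x)\le\Gamma_{p,q}(|x|)$, and passing to the supremum gives $V(t,x)\le\Gamma_{p,q}(|x|)$. For the lower bound, Proposition~\ref{prop.minimalVSS} guarantees that $U=\overline{U}$ is itself a very singular solution, i.e.\ $U\in\cs$, so $U(t,x)\le V(t,x)$ by the very definition \eqref{maximalVSS} of $V$ as the pointwise supremum over $\cs$. There is no real obstacle here, as the argument is essentially bookkeeping once Propositions~\ref{prop.compFG}, \ref{prop.decayVSS}, and~\ref{prop.minimalVSS} are available; the only mildly non-trivial step is the passage from a uniform Lipschitz constant for the family to a Lipschitz constant for the supremum, which is standard.
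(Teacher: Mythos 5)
Your proof is correct and follows essentially the same route as the paper: the $L^\infty$ bound on $V$ comes from taking the supremum of the uniform estimate \eqref{decayVSS}, the gradient bound comes from the uniform Lipschitz estimate \eqref{gradestVSS} via the same supremum-over-$\cs$ argument (the paper phrases it with the one-sided inequality and implicitly symmetrizes, exactly as you do explicitly), and \eqref{estmax2} is deduced from $U\in\cs$ together with Proposition~\ref{prop.compFG}. The only cosmetic difference is your explicit mention of Rademacher's theorem where the paper simply asserts $V(t)\in W^{1,\infty}(\real^N)$.
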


\begin{proof}
Since $U\in\cs$, the inequality \eqref{estmax2} follows at once from \eqref{maximalVSS} and Proposition~\ref{prop.compFG}. We next deduce from \eqref{decayVSS} and \eqref{maximalVSS} that $\|V(t)\|_\infty \le K_\gamma\ t^{-\alpha}$ for $t>0$ while
\eqref{gradestVSS} and \eqref{maximalVSS} entail that, for any $x\in\real^N$, $y\in\real^N$, $u\in\cs$, and $t>0$,
$$
u(t,x) \leq u(t,y) + K_\gamma\ t^{-(\alpha+\beta)}\ |x-y|\leq
V(t,y) + K_\gamma\ t^{-(\alpha+\beta)}\ |x-y|.
$$
Hence, passing to the supremum over $u\in\cs$
$$
V(t,x)\leq V(t,y) + K_\gamma\ t^{-(\alpha+\beta)}\ |x-y|,
$$
and $V(t)$ is Lipschitz continuous for all $t>0$ with Lipschitz constant $K_\gamma\ t^{-(\alpha+\beta)}$. Consequently, $V(t)\in W^{1,\infty}(\real^N)$ and satisfies \eqref{estmax1}.
\end{proof}

We can now establish the main property of $V$.

\begin{lemma}\label{lem.Bardi}
$V$ is a very singular subsolution to \eqref{eq1}.
\end{lemma}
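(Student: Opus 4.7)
The plan is to verify each of the four defining requirements of a very singular subsolution from Definition~\ref{def.VSS} for the function $V$ constructed in \eqref{maximalVSS}: namely, (i) $V$ is an upper semicontinuous viscosity subsolution to \eqref{eq1} in $Q_\infty$; (ii) $V(t)$ lies in $L^1(\real^N)\cap W^{1,\infty}(\real^N)$ for every $t>0$; (iii) the concentration condition \eqref{VSS1} at the origin; and (iv) the vanishing condition \eqref{VSS2} outside the origin.

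I would first establish that $V\in C(Q_\infty)$. The spatial Lipschitz continuity of $V(t)$ for each $t>0$, with constant $K_\gamma t^{-(\alpha+\beta)}$, is already recorded in Lemma~\ref{est.maxVSS}. For time-equicontinuity, the key point is that the gradient bound \eqref{gradestVSS} from Proposition~\ref{prop.decayVSS} holds with the \emph{same} constant $K_\gamma$ for every $u\in\cs$. Applying Lemma~\ref{le.wp5} on $[\tau,\infty)$ with $A=K_\gamma\tau^{-(\alpha+\beta)}$ then yields a modulus of continuity in time which is uniform over $\cs$; this modulus is inherited by $V=\sup_{u\in\cs}u$. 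Once $V$ is known to be continuous, $V$ coincides with its own upper semicontinuous envelope, and since it is a pointwise supremum of viscosity subsolutions, the stability result \cite[Theorem~6.1]{OS} guarantees that $V$ is itself a viscosity subsolution of \eqref{eq1} in $Q_\infty$, establishing (i).

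Part (ii) is then immediate from Lemma~\ref{est.maxVSS}: the $W^{1,\infty}$-regularity follows from \eqref{estmax1}, and the estimates $V(t,x)\le K_\gamma t^{-\alpha}$ and $V(t,x)\le\Gamma_{p,q}(|x|)$ combined with the integrability of $\Gamma_{p,q}$ on $\real^N\setminus B_1(0)$ (recall $\alpha/\beta>N$ by \eqref{expas2}) give $V(t)\in L^1(\real^N)$. For (iii) there is nothing to do: since $U\in\cs$ and $V\ge U$ by \eqref{estmax2}, the concentration property of $U$ transfers to $V$.

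The main obstacle is (iv), for which one cannot simply average the bound $\int_{\{|x|\ge r\}}u(s,x)\,dx\to 0$ over $u\in\cs$. The trick I would use is to adapt Proposition~\ref{prop.zero} and observe that the pointwise bound it produces is in fact uniform over $\cs$: for any $x_0\ne 0$ and any $R\in(0,|x_0|/2)$, the estimate \eqref{DiBH3} gives
\[
\sup_{x\in B_R(x_0)}u(t,x)\le C\left(\frac{t}{R^p}\right)^{1/(2-p)},\qquad t>0,
\]
for every $u\in\cs$, with $C$ depending only on $N$ and $p$. Commuting the suprema in $x$ and $u$, the same bound holds for $V$, so $V(t,x_0)\to 0$ as $t\to 0$ for every $x_0\ne 0$. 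Since $V(t,\cdot)\le\Gamma_{p,q}(|\cdot|)$ and $\Gamma_{p,q}\in L^1(\{|x|\ge r\})$, Lebesgue's dominated convergence theorem delivers \eqref{VSS2} and completes the proof.
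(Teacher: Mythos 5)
Your proof is correct, and for the key point -- the vanishing condition \eqref{VSS2} -- it follows a genuinely different and arguably cleaner route than the paper. The paper deduces \eqref{VSS2} by covering the annulus $K(r,R)$ with finitely many balls $B_{r/8}(y_i)$ and applying the barrier estimate \eqref{locest.VSS} from Proposition~\ref{prop.local} on each ball, which is uniform over $\cs$ because the constant $A_{\lambda,r/4}$ depends only on $N$, $p$, $\lambda$, and $r$; it then lets $t\to 0$, $\lambda\to 0$, $R\to\infty$. You instead observe that the DiBenedetto--Herrero bound underlying Proposition~\ref{prop.zero} -- namely \eqref{DiBH3} -- has a constant depending only on $N$ and $p$, hence is uniform over $\cs$, giving $V(t,x_0)\to 0$ pointwise for each $x_0\ne 0$; combining with the domination $V\le\Gamma_{p,q}(|\cdot|)\in L^1(\{|x|\ge r\})$ and Lebesgue's theorem yields \eqref{VSS2} in one step. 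This avoids the covering argument and the auxiliary barrier entirely, at the small cost of having to re-open the proof of Proposition~\ref{prop.zero} to note the uniformity of the constant; both arguments ultimately rest on a $\cs$-uniform bound, just a different one.

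Two small remarks on part (i). First, your idea of establishing continuity of $V$ (via the $\cs$-uniform gradient bound \eqref{gradestVSS} and Lemma~\ref{le.wp5}) before invoking the subsolution property is sound and actually makes explicit a step the paper leaves implicit. Second, the citation is not quite right: \cite[Theorem~6.1]{OS} is the stability theorem for limits of (approximating sequences of) viscosity solutions, whereas the fact that the upper semicontinuous envelope of a pointwise supremum of subsolutions is a subsolution is a Perron-type result, which is what the paper cites as \cite[Proposition~V.2.11]{BCD}. With continuity in hand, $V$ equals its own upper semicontinuous envelope, and the BCD result gives (i). Parts (ii) and (iii) coincide with the paper's argument.
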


\begin{proof}
Since $V$ is the supremum of a family of viscosity solutions to \eqref{eq1} by \eqref{maximalVSS}, the fact that $V$ is a viscosity subsolution to \eqref{eq1} follows from \cite[Proposition V.2.11]{BCD}. The regularity $V(t)\in L^1(\real^N)\cap W^{1,\infty}(\real^N)$ for $t>0$ is a consequence of Lemma~\ref{est.maxVSS} and the integrability at infinity of $\Gamma_{p,q}$ (see Lemma~\ref{le.wp6}). Also, the concentrating property \eqref{VSS1} at the origin as $t\to 0$ follows at once from \eqref{maximalVSS} since $U\in\cs$. It remains to check that $V(t)$ vanishes outside the origin as $t\to 0$. For that purpose, let $r>0$ and $R>r$. Since the annulus $K(r,R):= \{x\in\real^N\ :\ r/2\le |x|\le R\}$ is compact, there is a finite number $l$ of points $(y_i)_{1\le i \le l}$ in $\real^N$ such that
\begin{equation}
K(r,R)\subset\bigcup\limits_{i=1}^{l} B_{r/8}(y_i). \label{d37}
\end{equation}
We infer from \eqref{locest.VSS} that, for any $1\leq i\leq l$, $\lambda>0$, $t>0$, and $u\in\cs$, we
have
$$
u(t,x)\leq \lambda
e^{A_{\lambda,r/4}t}\ \exp\left(\frac{1}{\sigma_{y_i,r/4}(x)}\right), \quad x\in B_{r/8}(y_i).
$$
The above estimate being valid for all $u\in\cs$ we conclude that, for any $1\leq i\leq l$, $\lambda>0$, and $t>0$,
\begin{equation}
V(t,x)\leq \lambda
e^{A_{\lambda,r/4}t}\ \exp\left(\frac{1}{\sigma_{y_i,r/4}(x)}\right), \quad x\in B_{r/8}(y_i). \label{d38}
\end{equation}
Recalling \eqref{d37}, we infer from \eqref{estmax2} and \eqref{d38} that, for $t>0$ and $\lambda>0$,
\begin{eqnarray*}
\int_{\{|x|\geq r\}} V(t,x)\,dx & = &\int_{K(r,R)} V(t,x) \,dx + \int_{\{|x|>R\}} V(t,x)\,dx\\
& \leq & \lambda\ e^{A_{\lambda,r/4}t}\ \sum\limits_{i=1}^{l} \int_{B_{r/8}(y_i)}\ \exp\left(\frac{1}{\sigma_{y_i,r/4}(x)} \right) \,dx + \int_{\{|x|>R\}} \Gamma_{p,q}(|x|) \,dx.
\end{eqnarray*}
Passing first to the limit $t\to 0$ and then $\lambda\to 0$ gives
$$
\limsup\limits_{t\to 0} \int_{\{|x|\geq r\}} V(t,x) \,dx\le  \int_{\{|x|>R\}} \Gamma_{p,q}(|x|) \,dx
$$
for all $R>r$. Thanks to Lemma~\ref{le.wp6}, the right-hand side of the above inequality converges to zero as $R\to\infty$, so that $V$ satisfies \eqref{VSS2} and the proof is complete.
\end{proof}

We are now in a position to identify $V$.

\begin{proposition}\label{prop.maximalVSS}
The function $V$ defined in \eqref{maximalVSS} is a very singular
solution in the sense of Definition~\ref{def.VSS}. Moreover, it is
radially symmetric and has self-similar form, thus coinciding with
the unique self-similar very singular solution $U$ given by Theorem~\ref{th.VSSunique}.
\end{proposition}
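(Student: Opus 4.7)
My plan is to exploit the maximality built into the definition of $V$ together with the scaling and rotational symmetries of \eqref{eq1}, and then invoke the uniqueness result Theorem~\ref{th.VSSunique} to pin $V$ down as $U$.

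\textbf{Step 1: $V$ is a very singular solution.} By Lemma~\ref{lem.Bardi}, $V$ is already a very singular subsolution in the sense of Definition~\ref{def.VSS}. Apply Proposition~\ref{prop.maxVSS} to obtain a very singular solution $\overline{V}$ with $V\le\overline{V}$ in $Q_\infty$. Since $\overline{V}\in\cs$, the definition \eqref{maximalVSS} of $V$ as the pointwise supremum of $\cs$ gives the reverse inequality $\overline{V}\le V$. Hence $V=\overline{V}\in\cs$, so $V$ is a very singular solution.

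\textbf{Step 2: $V$ is radially symmetric.} I will show that $\cs$ is stable under the action of the orthogonal group. For any $R\in O(N)$ and any $u\in\cs$, the function $u_R(t,x):=u(t,Rx)$ is a viscosity solution to \eqref{eq1} (by the rotational invariance of the equation and of the class $\ca$ of admissible test functions), inherits the regularity \eqref{regvss}, and satisfies \eqref{VSS1}--\eqref{VSS2} after the change of variables $y=Rx$. The map $u\mapsto u_R$ is thus a bijection of $\cs$ onto itself, and taking suprema yields $V(t,Rx)=V(t,x)$ for every $R\in O(N)$.

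\textbf{Step 3: $V$ is self-similar.} For $\lambda>0$, mimic \eqref{rescaling} and set
\begin{equation*}
V^\lambda(t,x):=\lambda^{(p-q)/(2q-p)}V(\lambda t,\lambda^{(q-p+1)/(2q-p)}x)=\lambda^{\alpha}V(\lambda t,\lambda^{\beta}x).
\end{equation*}
Again I check that the map $u\mapsto u^\lambda$ sends $\cs$ to $\cs$: the scaling preserves \eqref{eq1}; the regularity \eqref{regvss} is preserved by a change of variables; and for \eqref{VSS1}--\eqref{VSS2}, the substitution $y=\lambda^{\beta}x$ yields
\begin{equation*}
\int_{\{|x|\le r\}} u^\lambda(s,x)\,dx=\lambda^{\alpha-N\beta}\int_{\{|y|\le \lambda^{\beta}r\}} u(\lambda s,y)\,dy,
\end{equation*}
and analogously for the tail integral, so \eqref{VSS1}--\eqref{VSS2} pass through unchanged as $s\to0$ (note $\alpha-N\beta>0$ by \eqref{expas2}, but what matters is only that this factor is a finite, non-zero constant). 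Hence $V^\lambda\le V$ for every $\lambda>0$, and applying the same argument to $\lambda^{-1}$ gives the reverse inequality, so $V^\lambda\equiv V$ for all $\lambda>0$. Choosing $\lambda=t^{-1}$ produces the self-similar form $V(t,x)=t^{-\alpha}f_V(|x|t^{-\beta})$, with radial dependence by Step~2.

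\textbf{Step 4: Conclusion.} $V$ is a radially symmetric self-similar very singular solution to \eqref{eq1}, so Theorem~\ref{th.VSSunique} forces $V=U$. Combined with Proposition~\ref{prop.minimalVSS}, this identifies the minimal and maximal very singular solutions, which will close the proof of Theorem~\ref{th.VSSgeneral}.

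\textbf{Expected obstacle.} Steps 1, 2, and 4 are essentially bookkeeping given the machinery already developed. The only delicate point is Step~3: one must verify carefully that the rescaling $u\mapsto u^\lambda$ genuinely preserves the concentration condition \eqref{VSS1} and the vanishing condition \eqref{VSS2}, and that these survive in the limiting behavior of the supremum. The invariance of the equation and of the test function class $\ca$ under the anisotropic scaling is routine; what requires a moment of care is matching up the exponents with \eqref{expas}--\eqref{expas2} so that the factor $\lambda^{\alpha-N\beta}$ in the mass integral is harmless and the tail estimate \eqref{VSS2} survives.
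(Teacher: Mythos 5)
Your proposal is correct and follows the same route as the paper: Step~1 mirrors the paper's argument verbatim (Lemma~\ref{lem.Bardi} plus Proposition~\ref{prop.maxVSS} plus maximality of $V$), and your Steps~2--3 simply spell out the scaling and rotational invariance argument that the paper dispatches in one sentence, before invoking Theorem~\ref{th.VSSunique} in Step~4 exactly as the paper does. The exponent bookkeeping in Step~3 checks out, so nothing is missing.
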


A straightforward consequence of Proposition~\ref{prop.maximalVSS}
is that $\cs=\{U\}$, which proves Theorem~\ref{th.VSSgeneral}.

\begin{proof}
It follows from Proposition~\ref{prop.maxVSS} and
Lemma~\ref{lem.Bardi} that there exists a very singular solution
$\overline{u}$ to \eqref{eq1} such that
$$
V(t,x)\leq\overline{u}(t,x) \quad \hbox{for} \ \hbox{all} \
(t,x)\in(0,\infty)\times\real^N.
$$
The definition \eqref{maximalVSS} of $V$ implies that
$V\equiv\overline{u}$ and thus $V$ is the maximal very singular
solution. The radial symmetry and self-similarity of $V$ then follow
from the scaling and rotational invariances of \eqref{eq1}.
\end{proof}

\subsection{A comparison principle}\label{sec4cc}

An interesting consequence of the uniqueness of the very singular
solutions to \eqref{eq1} is the following comparison principle for
the related elliptic equation
\begin{equation}\label{eq.ell}
-\Delta_{p}v+|\nabla v|^q-\a v-\b y\cdot\nabla v=0 \quad\text{ in }\quad \real^N\,.
\end{equation}

\begin{theorem}\label{th.comp}
Let $v_1$ be a viscosity subsolution and $v_2$ be a viscosity
supersolution to \eqref{eq.ell} in $\real^N$, such that
\begin{equation}
v_i\in L^1(\real^N)\cap W^{1,\infty}(\real^N)\,, \quad v_i\ge 0\,,
\quad v_i\not\equiv 0\, \quad i=1,2. \label{wp11}
\end{equation}
Assume that
\begin{equation}\label{int.est}
\lim\limits_{R\to\infty}\int_{\{|y|\geq R\}} v_i(y)|y|^{\a/\b-N}\,dy=0, \
i=1,2, \;\;\text{ and }\;\; v_2(y)\leq\Gamma_{p,q}(|y|), \quad y\in\real^N.
\end{equation}
Then $v_1(y)\leq f_U(y) \leq v_2(y)$ for all $y\in\real^N$, where
$f_U$ is the profile of the very singular solution $U$ to
\eqref{eq1}, see Theorem~\ref{th.VSSunique}.
\end{theorem}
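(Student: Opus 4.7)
The idea is to use the scaling invariance of \eqref{eq1} to lift $v_1$ and $v_2$ to self-similar sub- and supersolutions of the parabolic equation on $Q_\infty$, verify that they fall within the class of very singular sub- and supersolutions, and then squeeze them between $U$ and itself through the uniqueness results already established in Section~\ref{sec4}.

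Concretely, I would set
\begin{equation*}
V_i(t,x) := t^{-\alpha}\,v_i\bigl(xt^{-\beta}\bigr), \qquad (t,x)\in Q_\infty, \ i=1,2.
\end{equation*}
The algebraic identities $\alpha+1=\alpha(p-1)+\beta p=(\alpha+\beta)q$ built into the choice of exponents \eqref{expas} ensure that, on smooth functions, the parabolic operator $\partial_t-\Delta_p+|\nabla\cdot|^q$ pulls back under this transformation exactly to $t^{-\alpha-1}$ times the elliptic operator in \eqref{eq.ell}. Transposing this calculation to the level of admissible test functions then shows that $V_1$ is a viscosity subsolution and $V_2$ a viscosity supersolution of \eqref{eq1} in $Q_\infty$. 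The required regularity $V_i(t)\in L^1(\real^N)\cap W^{1,\infty}(\real^N)$ for $t>0$ follows from the corresponding hypothesis on $v_i$ via the change of variables $y=xt^{-\beta}$.

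The next step is the behavior as $t\to 0$, which is where the integral condition \eqref{int.est} enters. The substitution $y=xt^{-\beta}$ yields
\begin{equation*}
\int_{\{|x|\le r\}} V_i(t,x)\,dx = t^{-(\alpha-N\beta)} \int_{\{|y|\le rt^{-\beta}\}} v_i(y)\,dy,
\end{equation*}
which tends to $+\infty$ as $t\to 0$ since $\alpha-N\beta>0$ by \eqref{expas2} and $\|v_i\|_1>0$; this gives \eqref{VSS1}. For the complementary integral, estimating $|y|^{N-\alpha/\beta}\le R^{N-\alpha/\beta}$ on $\{|y|\ge R\}$ (valid because $\alpha/\beta>N$) and taking $R=rt^{-\beta}$ lead to
\begin{equation*}
\int_{\{|x|\ge r\}} V_i(t,x)\,dx \le r^{N-\alpha/\beta}\, \int_{\{|y|\ge rt^{-\beta}\}} v_i(y)\,|y|^{\alpha/\beta-N}\,dy,
\end{equation*}
which vanishes as $t\to 0$ by \eqref{int.est}. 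Hence $V_1$ is a very singular subsolution and $V_2$ a very singular supersolution to \eqref{eq1}.

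To finish, I would apply the uniqueness results from Section~\ref{sec4}: Proposition~\ref{prop.maxVSS} combined with the fact that $U$ is the only very singular solution (Theorem~\ref{th.VSSgeneral}, already established above) yields $V_1\le U$ in $Q_\infty$; the homogeneity of $\Gamma_{p,q}$ transforms the assumed bound $v_2\le\Gamma_{p,q}$ into $V_2(t,x)\le\Gamma_{p,q}(|x|)$, so that Corollary~\ref{cor.nonumber} applies and gives $U\le V_2$ in $Q_\infty$. Evaluating $V_1\le U\le V_2$ at $t=1$ produces the desired inequalities $v_1(y)\le f_U(|y|)\le v_2(y)$ for every $y\in\real^N$. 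The main technical nuisance will be the first part of step two, namely checking rigorously that the self-similar change of variables transforms viscosity sub/supersolutions of the elliptic equation into viscosity sub/supersolutions of \eqref{eq1} in the sense of Definition~\ref{def.visc}; the modified admissible class $\mathcal{A}$ requires that one carefully match test functions and their gradient-vanishing sets, though this is a standard adaptation of scaling arguments for viscosity solutions.
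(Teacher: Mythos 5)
Your proposal is correct and follows essentially the same route as the paper: define the self-similar lifts $V_i(t,x)=t^{-\alpha}v_i(xt^{-\beta})$, verify via the same two integral computations that they are a very singular subsolution and supersolution respectively, then invoke Proposition~\ref{prop.maxVSS} with Theorem~\ref{th.VSSgeneral} for $V_1\le U$ and Corollary~\ref{cor.nonumber} for $U\le V_2$, and evaluate at $t=1$. The one point you flag as a ``technical nuisance'' -- checking that the scaling genuinely preserves viscosity sub/supersolutions in the restricted class of Definition~\ref{def.visc} -- is indeed glossed over in the paper as well (``straightforward to check''), so you have, if anything, been slightly more careful.
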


Besides its interest in itself, this comparison principle will also
be useful to settle the asymptotic behavior in Section~\ref{sec5}.

\begin{proof}
For $i=1,2$, define
$$
u_i(t,x):=t^{-\a}v_i(xt^{-\b}), \quad (t,x)\in Q_\infty\,.
$$
It is then straightforward to check that $u_1$ is a subsolution and $u_2$ is a supersolution to \eqref{eq1} in $Q_\infty$. Moreover, we have
$$
u_i\in C(Q_{\infty})\;\;\text{ and }\;\; u_i(t)\in L^1(\real^N)\cap
W^{1,\infty}(\real^N), \quad t>0\,, \quad i=1,2.
$$
On the one hand, for $i=1,2$ and any $r>0$, we have
\begin{equation*}
\begin{split}
\int_{\{|x|\geq r\}}u_i(t,x)\,dx&=t^{N\b-\a}\int_{\{|y|\geq
rt^{-\b}\}}v_i(y)|y|^{\a/\b-N}|y|^{N-\a/\b}\,dy\\&\leq
r^{N-\a/\b}\int_{\{|y|\geq rt^{-\b}\}}v_i(y)|y|^{\a/\b-N}\,dy,
\end{split}
\end{equation*}
which tends to 0 as $t\to0$ by \eqref{int.est}. On the other hand,
since $v_i\not\equiv 0$, there is $r_0>0$ sufficiently large such
that
$$
\int_{B_{r_0}(0)} v_i(y)\, dy >0\,, \quad i=1,2\,.
$$
Consequently, for $t>0$ sufficiently small ($t\in \left( 0,(r/r_0)^{1/\b} \right)$), we have
$$
\int_{\{|x|\leq r\}}u_i(t,x)\,dx=t^{N\b-\a}\int_{\{|y|\leq
rt^{-\b}\}}v_i(y)\,dy>t^{N\b-\a}\int_{B_{r_0}(0)}v_i(y)\,dy,
$$
which tends to $+\infty$ as $t\to 0$, since $N\b-\a<0$. It follows
that $u_1$ is a very singular subsolution to \eqref{eq1} and $u_2$
is a very singular supersolution to \eqref{eq1}. Furthermore
$$
u_2(t,x)=t^{-\a}v_2(xt^{-\b})\leq\gamma|x|^{-\a/\b}=\Gamma_{p,q}(|x|),
$$
for any $(t,x)\in Q_\infty$. By Theorem~\ref{th.VSSgeneral},  Corollary~\ref{cor.nonumber}, and
Proposition~\ref{prop.maxVSS}, we obtain
$$
u_1\leq U\leq u_2 \quad \hbox{in} \quad Q_{\infty}.
$$
We reach the conclusion by going back to the original variables.
\end{proof}

\section{Convergence to self-similarity}\label{sec5}

With all the preparations done in the previous sections, we
are now ready to prove the main result about asymptotic convergence.
The proof will be divided into several steps.

\begin{proof}[Proof of Theorem \ref{th.asympt}] Let us first notice
that the condition \eqref{wp111} implies that $R(u_0)<\infty$, where
$R(u_0)$ is defined in \eqref{radius.initial}. Moreover, there
exists a sufficiently large constant $\kappa>0$ such that
$$
u_0(x)\leq\kappa|x|^{-\a/\b} \quad \hbox{for any} \quad
x\in\real^N.
$$

\noindent \textbf{Step~1. Self-similar variables.} In a first step,
we pass to self-similar variables and define the new variables $(s,y)$ and function $v$ by
\begin{equation}\label{self.fct}
u(t,x)=:(1+t)^{-\a}v(s,y), \quad s:=\ln(1+t), \ y:=x(1+t)^{-\b}.
\end{equation}
Then $v$ solves the equation
\begin{equation}\label{eq.self}
\partial_{s}v-\Delta_{p}v+|\nabla v|^q-\a v-\b y\cdot\nabla v=0,
\quad (s,y)\in Q_{\infty},
\end{equation}
with initial condition $v(0)=u_0$ in $\real^N$.

\medskip

\noindent \textbf{Step~2. Estimates for $v$.} Starting from the
estimates established for $u$, we can deduce estimates for
$v$ in similar norms as follows. First, recalling the homogeneity of $\Gamma_{p,q}$,
we deduce from \eqref{wp7} that
\begin{equation}\label{FG.boundv}
\begin{split}
v(s,y)&=e^{\a s}u(e^s-1,ye^{\b s})\leq e^{\a s}\Gamma_{p,q}(|y|e^{\b s}-R(u_0))\\&\leq\Gamma_{p,q}(|y|-R(u_0)e^{-\b s})
\end{split}
\end{equation}
for any $(s,y)\in Q_{\infty}$. Then, the estimates \eqref{wp3}
can be easily transformed into the following ones for $v$:
\begin{eqnarray}
\|v(s)\|_{1} + \|v(s)\|_{\infty} + \|\nabla
v(s)\|_{\infty} &\leq& \left[ \left(\frac{e^s}{e^s-1}\right)^{\a-N\b} + \left(\frac{e^s}{e^s-1}\right)^{\a} + \left(\frac{e^s}{e^s-1}\right)^{\a+\b} \right]K_{\kappa} \nonumber\\
&\leq& 6K_{\kappa},\label{wp3.v}
\end{eqnarray}
for any $s>\ln{2}>0$, where $K_{\kappa}$ is the constant in
\eqref{wp3}. Finally, the pointwise upper bound \eqref{est.point}
reads
\begin{equation}
|y|^{\a/\b} v(s,y) \le C\ \left[ \sup_{|z|\ge |y| e^{\b s}/4}\left\{ u_0(z) |z|^{\a/\b} \right\} + |y|^{-1/\b} \right] \label{pompee}
\end{equation}
for $(s,y)\in (0,\infty)\times (\real^N\setminus\{0\})$.

\medskip

\noindent \textbf{Step~3. Lower bound for $v$.} We infer from
\cite[Proposition~1.8]{IL1} that $u(t,x)>0$ for $(t,x)\in Q_\infty$.
In particular, $u(1,0)>0$ and, since $u(1,\cdot)\in C(\real^N)$,
there is $m_0>0$ such that
\begin{equation}
u(1,x)\ge m_0\,, \qquad x\in B_1(0)\,. \label{pompon}
\end{equation}
Next, according to the analysis performed in \cite{IL2} (in
particular, Lemma~2.1, Lemma~2.8, Lemma~2.10, Proposition~2.11, and
Proposition~2.16 therein), there exists $a_*>0$ such that, for $a\in
(0,a_*)$, the maximal solution $g_a$ defined on $[0,R_m(a))$ to the
Cauchy problem
\begin{equation}\label{IVPa}
\left\{
\begin{array}{l}
(|g_a'|^{p-2}g_a')'(r)+\displaystyle{\frac{N-1}{r}} (|g_a'|^{p-2}g_a')(r)+\a g_a(r)+\b rg_a'(r)-|g_a'(r)|^q=0 , \\
 \\
g_a(0)=a, \ g_a'(0)=0,
\end{array}\right.
\end{equation}
has the following properties: there is $R(a)\in (0,R_m(a))$ such that
\begin{equation}
0 < g_a(r) \le a \;\;\text{ for }\;\; r\in [0,R(a))\,, \quad g_a(R(a))=0\,, \;\;\text{ and }\;\; g_a'(R(a))<0\,. \label{cesar}
\end{equation}
Introducing
$$
G_{a,\lambda}(t,x) := \left\{
\begin{array}{lcl}
\lambda^{p/(2-p)}\ t^{-\a}\ g_a\left( \lambda |x| t^{-\b} \right) & \text{ if } & |x|\in \left[ 0, R(a) t^\b/\lambda \right]\,, \\
0 & \text{ if } & |x| \ge R(a) t^\b/\lambda\,,
\end{array}
\right.
$$
for $(a,\lambda) \in (0,a_*)\times (0,1)$, the properties of $g_a$ guarantee that $G_{a,\lambda}\in C(Q_\infty)$ and is a subsolution to \eqref{eq1} in $Q_\infty$ (it can be interpreted locally as the maximum of two subsolutions to \eqref{eq1} in $Q_\infty$, namely the zero function and $(t,x)\longmapsto \lambda^{p/(2-p)}\ t^{-\a}\ g_a\left( \lambda |x| t^{-\b} \right)$). \\
Now, we set
$$
a_0:= \frac{a_*}{2} \in (0,a_*)\,, \quad t_0 :=\frac{1}{R(a_0)^p} \left( \frac{m_0}{a_0} \right)^{2-p}\,, \quad \lambda_0 := R(a_0)\ t_0^\b\,,
$$
and observe that, if $x\in B_{R(a_0) t_0^\beta / \lambda_0}(0) = B_1(0)$, then \eqref{pompon} implies that
$$
G_{a_0,\lambda_0}(t_0,x) \le a_0 \lambda_0^{p/(2-p)} t_0^{-\a} = a_0 \left( \lambda_0 t_0^{-\b} \right)^{p/(2-p)} t_0^{1/(2-p)} = m_0 \le u(1,x)\,.
$$
Since $u(1,x)> 0 = G_{a_0,\lambda_0}(t_0,x)$ if $x\not\in B_{R(a_0) t_0^\beta / \lambda_0}(0)$, we have $u(1,x) \ge G_{a_0,\lambda_0}(t_0,x)$ for all $x\in\real^N$ and the comparison principle entails that
\begin{equation*}
u(t+1,x) \ge G_{a_0,\lambda_0}(t+t_0,x)\,, \qquad (t,x)\in Q_\infty\,.
\end{equation*}
In particular, for $t>0$ and $x\in B_{R(a_0) (t+t_0)^\beta / \lambda_0}(0)$,
$$
u(t+1,x) \ge \lambda_0^{p/(2-p)}\ (t+t_0)^{-\a}\ g_{a_0}\left( \lambda_0 |x| (t+t_0)^{-\b} \right) \,.
$$
In terms of $v$, the previous lower bound reads
\begin{equation}
v(s,y) \ge \lambda_0^{p/(2-p)} \left( \frac{e^s}{e^s-2+t_0} \right)^\a g_{a_0}\left( \lambda_0 |y| \left( \frac{e^s}{e^s-2+t_0} \right)^\b \right)\,, \label{modeste}
\end{equation}
for $s>\ln{2}$ and $|y|\le (R(a_0)/\lambda_0) ((e^s-2+t_0)
e^{-s})^\b$.

\medskip

\noindent \textbf{Step~4. Half-relaxed limits.} To complete the
proof of the convergence, we introduce the half-relaxed limits
\cite{Bl94}, in a similar way as it has been previously used in
papers on large-time behavior, see \cite{ILV, Ro} for instance. We thus define
\begin{equation*}
\tilde{w}_{*}(s,y):=\liminf\limits_{(\sigma,z,\e)\to(s,y,0)}v\left(\frac{\sigma}{\e},z\right),
\quad
\tilde{w}^{*}(s,y):=\limsup\limits_{(\sigma,z,\e)\to(s,y,0)}v\left(\frac{\sigma}{\e},z\right)
\end{equation*}
for $(s,y)\in Q_\infty$. It is a standard fact that $\tilde{w}_{*}$ and $\tilde{w}^{*}$ do not depend on $s>0$, so that we can define
$$
w_{*}(y):=\tilde{w}_{*}(1,y)=\tilde{w}_{*}(s,y), \quad
w^{*}(y):=\tilde{w}^{*}(1,y)=\tilde{w}^{*}(s,y), \quad s>0\,.
$$
In addition, it follows from \cite[Th\'eor\`eme~4.1]{Bl94} that
$w_{*}$ is a viscosity supersolution and $w^{*}$ is a viscosity
subsolution to the stationary equation associated to
\eqref{eq.self}, that is, the elliptic equation \eqref{eq.ell}.
Moreover, the definition of $w_*$ and $w^*$ and \eqref{modeste}
ensure that
\begin{equation}
w_*\leq w^* \;\;\text{ in }\;\; \real^N \;\;\text{ and }\;\; \lambda_0^{p/(2-p)}\ g_{a_0}(\lambda_0 |y|) \le w_*(y) \;\;\text{ for }\;\; y\in B_{R(a_0)/\lambda_0}(0)\,. \label{cesar2}
\end{equation}
An obvious consequence of \eqref{cesar} and \eqref{cesar2} is that
$w_*$ and $w^*$ are both not identically equal to zero.

Our aim now is to show that $w_*\equiv w^*$ with the help of
Theorem~\ref{th.comp}. In order to apply it, we translate the
estimates for $v$ in Step~2 above into estimates for $w_*$ and
$w^*$. We readily notice that \eqref{FG.boundv} implies
\begin{equation}\label{FG.boundw}
w_*(y)\leq w^*(y)\leq\Gamma_{p,q}(|y|), \quad \hbox{for} \
\hbox{any} \ y\in\real^N,
\end{equation}
and that \eqref{wp3.v} implies that
\begin{equation}\label{wp3.w}
w_*(y)\leq w^*(y)\leq6K_{\kappa}, \quad \|\nabla
w_{*}\|\leq6K_{\kappa}, \ \|\nabla w^{*}\|\leq6K_{\kappa}, \quad
y\in\real^N,
\end{equation}
whence $w_*$ and $w^*$ belong to the space $L^1(\real^N)\cap
W^{1,\infty}(\real^N)$. In addition, taking into account the
condition \eqref{wp111} on $u_0$, we deduce from \eqref{pompee} that
$$
w_*(y)\leq w^*(y)\leq C|y|^{-(\a+1)/\b}, \quad y\in\real^N\,.
$$
Consequently,
\begin{equation}\label{VSS1.w}
\int_{\{|y|\geq r\}}  \left( w_{*}(y) + w^{*}(y) \right) |y|^{\a/\b-N}\,dy\leq
C\int_{r}^{\infty}s^{-1/\b-1}\,ds=C r^{-1/\b},
\end{equation}
which converges to 0 as $r\to\infty$. Gathering \eqref{cesar2},
\eqref{FG.boundw}, \eqref{wp3.w}, and \eqref{VSS1.w}, we are in a
position to apply Theorem~\ref{th.comp} and conclude that $w^*\leq
f_U \le w_*$ in $\real^N$.

Recalling \eqref{cesar2}, we have established that $w_*\equiv
w^*=f_U$, which in turn implies that
$$
\lim_{\varepsilon\to 0} \sup_{y\in K}\left\{ \left| v\left( \frac{1}{\varepsilon},y \right) - f_U(y) \right| \right\} = 0
$$
for any compact subset $K$ of $\real^N$ by \cite[Lemma V.1.9]{BCD}
or \cite[Lemme~4.1]{Bl94}. Owing to \eqref{FG.boundv} and the decay
of $f_U$ as $|x|\to\infty$ (see Theorem~\ref{th.VSSunique}), the
above convergence can be improved to the convergence of $v(s)$ to
$f_U$ in $L^\infty(\real^N)$ as $s\to\infty$. Going back to the
original variables gives \eqref{asympt} and ends the proof.
\end{proof}

\section*{Acknowledgements}

The research of R. I. is partially supported by the Spanish
project MTM2008-03176. Part of this work was done while Ph. L. enjoyed the hospitality and support of the Departamento de An\'alisis Matem\'atico of the Univ. de Valencia.

\bibliographystyle{plain}


\end{document}